\numberwithin{equation}{section}
\theoremstyle{definition}
\newtheorem{definition}{Definition}[section]
\theoremstyle{definition}
\newtheorem*{ntn}{Notation}
\newtheorem{conjecture}{Conjecture}[section]
\newtheorem{remark}[definition]{Remark}
\theoremstyle{plain}
\newtheorem{theorem}[definition]{Theorem}
\newtheorem*{thms}{Theorem}
\newtheorem{thmx}{Theorem}
\newtheorem{lemma}[definition]{Lemma}
\newtheorem{Prop}[definition]{Proposition}
\newcommand{\beas}{\begin{eqnarray*}}
\newcommand{\eeas}{\end{eqnarray*}}
\newcommand{\bes} {\begin{equation*}}
\newcommand{\ees} {\end{equation*}}
\newcommand{\be} {\begin{equation}}
\newcommand{\ee} {\end{equation}}
\newcommand{\bea} {\begin{eqnarray}}
\newcommand{\eea} {\end{eqnarray}}
\newcommand{\alp}{\alpha}
\newcommand{\bet}{\beta}
\newcommand{\de}{\delta}
\newcommand{\eps}{\varepsilon}
\newcommand{\gam}{\gamma}
\newcommand{\Lam}{\Lambda}
\newcommand{\kap}{\kappa}
\newcommand{\om}{\omega}
\newcommand{\sig}{\sigma}
\newcommand{\Sig}{\Sigma}
\newcommand\smpartl[2]{\frac{\partial{#1}}{\partial{#2}}}
\newcommand\partl[2]{\dfrac{\partial{#1}}{\partial{#2}}}
\newcommand{\bdy}{\partial}
\newcommand{\bdisc}{\bdy\Delta}
\newcommand{\cdisc}{\overline\Delta}
\newcommand{\Om}{\Omega}
\newcommand\opar[1]{D^{n-1}({#1})}
\newcommand\cpar[1]{\overline{D^{n-1}({#1})}}
\newcommand{\dom}{\overline\Delta\times D^{n-1}(t)}
\newcommand{\cont}{\mathcal{C}}
\newcommand\ckm[2]{\cont^{#1,#2}}
\newcommand\ockm[2]{\mathscr A^{#1,#2}}
\newcommand{\rea}{\operatorname{Re}}
\newcommand{\ima}{\operatorname{Im}} 
\newcommand\wt[1]{\widetilde{#1}}
\newcommand\wh[1]{\widehat{#1}}
\newcommand{\Sing}{\operatorname{Sing}}
\newcommand{\dist}{\operatorname{dist}}
\newcommand{\diam}{\operatorname{diam}}
\newcommand{\gr}{\operatorname{Graph}}
\newcommand{\bast}{\boldsymbol{*}}
\newcommand{\ann}{\text{Ann}}
\newcommand{\ba}{\boldsymbol a}
\newcommand{\bA}{\boldsymbol A}
\newcommand{\cA}{\mathcal{A}}
\newcommand{\B}{\mathcal{B}}
\newcommand{\bn}{\mathbf B^{n+1}}
\newcommand{\cD}{\mathcal{D}}
\newcommand{\fd}{\mathfrak d}
\newcommand{\cE}{\mathcal{E}}
\newcommand{\ev}{\operatorname{ev}}
\newcommand{\cF}{\mathcal{F}}
\newcommand{\fg}{\mathfrak g}
\newcommand{\cG}{\mathcal G}
\newcommand{\hol}{\mathcal{O}}
\newcommand{\fH}{\mathfrak H}
\newcommand{\cH}{\mathcal H}
\newcommand{\id}{\text{\bf I}}
\newcommand{\cJ}{\mathcal J}
\newcommand{\sL}{\mathscr{L}}
\newcommand{\cM}{\mathcal{M}}
\newcommand{\cN}{\mathcal N}
\newcommand{\cQ}{\mathcal Q}
\newcommand{\cP}{\mathcal{P}}
\newcommand{\cR}{\mathcal{R}}
\newcommand{\sn}{\mathbf S^n}
\newcommand{\bs}{\mathbf{s}}
\newcommand{\bt}{\mathbf{t}}
\newcommand{\tr}{\operatorname{T}}
\newcommand{\cU}{\mathcal{U}}
\newcommand{\cV}{\mathcal{V}}
\newcommand{\cW}{\mathcal{W}}
\newcommand{\zobar}{\overline{z_1}}
\newcommand{\zbar}{\overline{z}}
\newcommand{\CC}{\mathbb{C}^2}
\newcommand{\Cn}{\mathbb{C}^n}
\newcommand{\C}{\mathbb{C}} 
\newcommand{\rl}{\mathbb{R}}
\newcommand{\Rn}{\mathbb{R}^{n}}
\newcommand{\N}{\mathbb{N}}
\newcommand{\Gl}{\operatorname{GL}}
\title{Stability of the hull(s) of an $n$-sphere in $\Cn$}
\begin{document}
\author{Purvi Gupta}
\address{Department of Mathematics, Indian Institute of Science, 
Bangalore -- 560012, India}
\email{purvigupta@iisc.ac.in}
\author{Chloe Urbanski Wawrzyniak}
\address{Department of Mathematics, Rutgers University\\
New Brunswick, New Jersey 08854}
\email{ceu11@math.rutgers.edu}
\begin{abstract}
We study the (global) Bishop problem for small perturbations of $\sn$ --- the unit sphere of $\C\times\rl^{n-1}$ --- in $\Cn$. We show that if $S\subset\Cn$ is a sufficiently-small perturbation of $\sn$ (in the $\cont^3$-norm), then $S$ bounds an $(n+1)$-dimensional ball $M\subset\Cn$ that is foliated by analytic disks attached to $S$. Furthermore, if $S$ is either smooth or real analytic, then so is $M$ (upto its boundary). Finally, if $S$ is real analytic (and satisfies a mild condition), then $M$ is both the envelope of holomorphy and the polynomially convex hull of $S$. This generalizes the previously known case of $n=2$ (CR singularities are isolated) to higher dimensions (CR singularities are nonisolated).    
\end{abstract}
\maketitle

\section{Introduction}\label{sec_intro}

\subsection {Motivation and statement of result}
Given a compact subset $E\subset {\mathbb C}^n$, a classical problem in complex analysis is to compute its {\em envelope of holomorphy}, $\widetilde E$, which is the spectrum (i.e., maximal ideal space) of $\hol(E)$. Here, $\hol(E)$ denotes the algebra of germs of holomorphic functions defined in a neighborhood of $E$. In general, $\widetilde E$ need not be a subset of $\Cn$ in any natural way. However, if $E'$ is the intersection of all the 
pseudoconvex domains containing $E$, and the restriction map $\hol(E')\rightarrow\hol(E)$ is bijective, then $\wt E=E'$. From the point of view of applications, the problem of computing $\wt E$ is particularly important when $E$ is a smooth orientable submanifold of $\Cn$. For instance, a well-known result due to Harvey and Lawson (\cite{HaLa75}) says, among other things, that if a compact, orientable, maximally complex, CR submanifold $E\subset\Cn$ of dimension $2k+1$, $k>0$, is contained in the boundary of a strongly pseudoconvex domain, then it bounds a
complex analytic variety with isolated singularities, which serves as $\wt E$.  Since $\wt E$ has minimal volume
among all integral currents bound by $E$ (by the
Weingarten formula) it is the solution of a {\em complex plateau problem}. When $E\subset\Cn$ is a closed Jordan curve, i.e., $k=0$, although $E$ is {\em holomorphically convex} ($\wt E=E$), the complex plateau problem has implications for polynomial approximations on $E$. A classical result due to Wermer (\cite{We58}) describes the precise conditions under which $E$ bounds a complex variety $V$ in $\Cn$, in which case $E\cup V$ coincides with the polynomially convex hull of $E$. The {\em polynomially convex hull} of a compact set $E\subset\Cn$ is the set $\widehat E=\{z\in\Cn:|p(z)|\leq \sup_E|p|\ \text{for all polynomials $p$ on}\ \Cn\}$. In analogy with $\wt E$, we can view $\widehat E$ as the spectrum of $\mathcal P(E)$ --- the set of functions in $\cont(E)$ that are uniformly approximable by holomorphic polynomials on $E$. 
 
\vspace{8pt}

In the cases cited above, $E$ has odd
dimension and maximal CR dimension. In certain other cases, 
$\wt{E}$ is a Levi-flat (foliated by analytic varities) submanifold with $E$ as its boundary. In his seminal paper of 1965 (\cite{Bi65}), Bishop first
considered the local version of this problem: given a point $p$ on a submanifold $S\subset\Cn$ and a sufficiently small neighborhood $E\subset S$ of $p$, under what conditions will $\wt{E}$ be a smooth Levi-flat submanifold with $E$ as
part of its smooth boundary? This is known as the {\em Bishop problem}. The global version of this problem is typically studied for generic closed submanifolds of $\Cn$ admitting only nondegenerate CR singularities, i.e, points where the maximal complex tangent space has nongeneric dimension. These are the so-called {\em Bishop submanifolds}. If $S$ is totally real (at $p$), then $S$ is holomorphically convex (at $p$). Bishop discovered that for an $n$-manifold $S$
in ${\mathbb C}^n$, if $S$ at $p$ has a CR singularity of {\em elliptic} type, then $\wt S$ is nontrivial due to the presence of embedded complex analytic disks, or {\em Bishop disks}, attached to $S$ near $p$. Further, he conjectured that these disks foliate a unique Levi-flat submanifold that serves as $\wt E$ for a sufficiently small neighborhood $E\subset S$ of $p$, and contains $E$ as part of its
smooth (real analytic) boundary when $S$ is smooth (real analytic) near $p$. In the case of $\CC$, Bishop's conjecture was settled by Kenig-Webster (\cite{KeWe82}) in
the smooth category, and by Webster-Moser (\cite{MoWe83}), Moser (\cite{Mo85}) and Huang-Krantz (\cite{HuKr95}) in the real analytic category. For general $n$, Bishop's conjecture was finally shown to be true in the work of Huang (\cite{Hu98}), partially based on the previous work of Kenig-Webster (\cite{KeWe84}). In contrast to the ellipitc case, Forstneri{\v c}-Stout showed in \cite{FoSt91} that if $p$ is a {\em hyperbolic} complex point of a surface $S\subset\CC$, then the local envelope of holomorphy of $S$ at $p$ is trivial. 


The first major breakthrough for the global version of the Bishop problem
was made by Bedford and Gaveau, when
they proved in \cite{BeGa83} that any smooth real Bishop surface $S\subset {\mathbb
C}^2$ with only two elliptic complex tangent points (hence, $S$ is a sphere) and contained in
the boundary of a bounded strongly pseudoconvex domain bounds a
Levi-flat hypersurface $M$ that is foliated by embedded analytic disks attached to $S$. Moreover
$M$ is precisely $\wt S$ (and even $\widehat S$, in some cases). By the aforementioned work
of Kenig-Webster, Moser and Huang-Moser, $\wt S$ is smooth (real analytic) up to $S$ when $S$ is smooth (real analytic). Subsequently, this result was generalized by Bedford-Klingenberg in \cite{BeKl91} and Kruzhilin in \cite{Kr91}, who showed that $\wt S$ is a Levi-flat hypersurface bound by $S$, even when $S$ is a sphere admitting hyperbolic complex tangent points (of course, one cannot expect
full regularity of $\wt S$ at a hyperbolic complex tangent
point). This problem was then solved for topological spheres in $\CC$ by Shcherbina in \cite{Sh93} (also see \cite{ChSh95} for a more general result).

The study of Bishop disks has found many
important applications in the areas of classical dynamical systems,
symplectic geometry and topology (as seen in the work of Gromov, Hofer and Eliashberg; see \cite{Gr85}, \cite{Ho93} and \cite{El90}). However, the (global) Bishop problem for $n$-manifolds in $\Cn$ is only well-understood for spheres in dimension two. The following global version of the Bishop problem in general dimensions remains wide open:

\begin{conjecture}[\cite{Hu17}]\label{conj}
Let $S\subset {\mathbb C}^n$, $n\ge 3$, be
a smooth compact $n$-dimensional Bishop submanifold, with only nondegenerate
elliptic complex tangents. Suppose that $S$ is contained in the
boundary of a smoothly bounded strongly pseudoconvex domain. Then
$S$ bounds a smooth Levi-flat submanifold $M$, which has $S$
as its smooth boundary. Morever, $M$ serves as the envelope of holomorphy of $S$.
\end{conjecture}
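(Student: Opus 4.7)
My plan is to combine the known local theory of Bishop disks near nondegenerate elliptic CR singularities with a continuity method on the moduli space of attached analytic disks, using the strong pseudoconvexity of the ambient domain as the global compactness input. The seed input comes from Huang's theorem (\cite{Hu98}): around any nondegenerate elliptic CR singular point $p\in S$, there is a germ of a smooth Levi-flat $(n{+}1)$-manifold $M_p$ foliated by embedded analytic disks attached to $S$ and shrinking to $p$. For $n\ge 3$ the CR singular set $\Sig\subset S$ is typically an $(n{-}2)$-dimensional submanifold, not a finite set, so I would first upgrade the local theory to a parametric version along $\Sig$ (using that nondegenerate ellipticity is an open condition and the Bishop equation depends smoothly on parameters), producing a smooth Levi-flat manifold-with-corners $M_{\mathrm{loc}}$ defined in a one-sided neighborhood of $\Sig$.

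Let $\cM$ denote the space of nonconstant analytic disks $\varphi\colon\cdisc\to\overline{\Om}$ satisfying $\varphi(\bdisc)\subset S$, modulo reparametrization. A standard Riemann--Hilbert Fredholm analysis makes $\cM$ a smooth $n$-manifold at every regular disk. Let $\cM_0$ be the connected component of regular disks containing the seed family $M_{\mathrm{loc}}$. Openness of the foliated structure propagates by the implicit function theorem. For closedness, one must show that any sequence in $\cM_0$ either subconverges to another disk in $\cM_0$ or degenerates to a point of $\Sig$: strong pseudoconvexity of $\Om$ and a plurisubharmonic exhaustion prevent escape to $\bdy\Om$; total realness of $S\setminus\Sig$ rules out boundary bubbles and cusps (the Fredholm index of holomorphic disks with totally real boundary stays nonnegative); and the elliptic normal form along $\Sig$, combined with a disk-uniqueness statement, should force the remaining degenerations to glue with the seed foliation at $\Sig$.

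Disks in $\cM_0$ are pairwise disjoint in their interiors: an interior intersection would propagate through the locally foliated structure back to the seed family at $\Sig$, where injectivity is guaranteed by Huang's local theorem. Hence the union $M$ of all disks in $\cM_0$, together with $S$, is a smooth Levi-flat $(n{+}1)$-submanifold of $\overline{\Om}$ bounded by $S$. Any $f$ holomorphic in a neighborhood of $S$ extends to a continuous function on $M$, holomorphic along each leaf, via Cauchy's formula on each disk, so $M\subset\wt{S}$. Conversely, a continuous plurisubharmonic exhaustion of $\Om\setminus M$ (built by combining the defining function of $\Om$ with a barrier of the form $-\log\dist(\cdot,M)$ along the leaves) shows $\Om\setminus M$ is pseudoconvex, so $\wt S\subset M$ and we obtain $\wt S=M$.

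The principal obstacle is the closedness step in the middle paragraph. In the settled case $n=2$, isolated CR singularities allow a direct sweep-out of $\wt S$ by analytic disks through an exhausting family of pseudoconvex domains (Bedford--Gaveau, Bedford--Klingenberg, Shcherbina), bypassing any moduli-space compactness assertion. No analogous direct construction is available for $n\ge 3$, where $\Sig$ is positive-dimensional and the transverse CR geometry of $S$ varies continuously along $\Sig$; what is missing is a higher-dimensional analogue of the Moser--Webster normal form \emph{along} a submanifold of nondegenerate elliptic CR singularities, together with a compactness theorem for Bishop disks excluding degenerations to points outside the open seed foliation. This gap is precisely what forces the restriction in the present paper to small $\cont^3$-perturbations of $\sn$: on $\sn$ the seed foliation is explicit (the round analytic disks foliating the unit $(n{+}1)$-ball), so the extension $\cM_0\to\cM$ is controlled by a quantitative stability argument rather than a new global theorem, leaving the full conjecture open.
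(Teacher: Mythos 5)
There is a genuine gap here, and in fact your own final paragraph concedes it: the statement you were asked to prove is Conjecture~\ref{conj}, which this paper explicitly leaves open and does not prove. The paper's contribution (Theorem~\ref{thm_main}) is only the special case in which $S$ is a sufficiently small $\cont^3$-perturbation of the standard sphere $\sn$, handled by a quantitative stability argument: Alexander-style solvability of the Riemann--Hilbert problem away from $\Sing S$, the Forstneri{\v c}--Globevnik partial-index machinery to upgrade regularity, the Kenig--Webster/Huang local hulls near the elliptic CR singular set, and a flattening trick to identify the resulting manifold with the envelope of holomorphy and polynomial hull. None of that supplies what your outline needs for a general $S$.

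Concretely, the unproved step in your proposal is the closedness/compactness assertion for the moduli space $\cM_0$ of attached disks: you assert that strong pseudoconvexity prevents escape to $\bdy\Om$, that total realness of $S\setminus\Sig$ rules out boundary bubbling and cusping, and that degenerating disks must glue onto the seed foliation along the $(n-2)$-dimensional singular set $\Sig$. No such compactness theorem for Bishop disks with positive-dimensional elliptic CR singular locus is known, and your ``parametric Huang'' seed family along $\Sig$, the disk-uniqueness statement near $\Sig$, and the plurisubharmonic barrier $-\log\dist(\cdot,M)$ showing $\Om\setminus M$ pseudoconvex are all stated without proof; each is a substantial open problem rather than a routine verification (for instance, gradient bounds for disks approaching $\Sig$, and injectivity/disjointness of leaves far from the seed family, are exactly the issues that the two-dimensional arguments of Bedford--Gaveau and Bedford--Klingenberg do not address in higher dimensions). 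So the proposal is an honest research outline, not a proof, and it cannot be compared to a proof in the paper because the paper offers none: it proves only the perturbative case, where the explicit foliation of $\bn$ by round disks replaces the missing compactness input.
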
 

The purpose of the present paper is to make progress along  the
lines of Conjecture 1.1. We will consider the case when $S$ is a
small perturbation of the standard $n$-sphere in ${\mathbb C}^n$.
More precisely,  we study the hulls of small perturbations of the
following natural embedding of the $n$-sphere in $\Cn$. \bes
    \sn=\left\{(z,z')\in\C\times\C^{n-1}:|z|^2+||z'||^2=1,\ \ima z'=0\right\}.
\ees
Let $\bn$ denote the ball bound by $\sn$ in $\C\times\rl^{n-1}$,
and note that $\bn$ is both the envelope of holomorphy and the polynomially convex hull of
 $\sn$, and is trivially foliated by analytic disks.
  We establish the following stability result,
  which gives the first solution to Conjecture~\ref{conj} in a special
  case:

\begin{theorem}\label{thm_main} Let $\rho>0$ and $\de>0$. Then, there is an $\eps>0$ such that, for $k>>1$, if $\psi\in\cont^{3k+7}(\sn;\Cn)$ with $||\psi||_{\cont^3(\sn;\Cn)}<\eps$, then there is a $\cont^k$-smooth $(n+1)$-dimensional submanifold with boundary, $M\subset\Cn$, such that
\begin{enumerate}
\item $\bdy M=\Psi(\sn)$, where $\Psi=\id+\psi$ on $\sn$.
\item $M$ is foliated by an $(n-1)$-parameter family of embedded analytic disks attached to $\Psi(\sn)$.
\item There is a $\cont^k$-smooth diffeomorphism $j:\bn\rightarrow M$ with $||j-\id||_{\cont^2(\bn;\Cn)}<\de$.
\item If $\psi$ is $\cont^\infty$-smooth, then $M$ is $\cont^\infty$-smooth upto its boundary.
\item If $\psi$ is real analytic, then $M$ is real analytic upto its boundary.
\item If $\psi$ is real analytic and the complexified map $\psi_\C$ extends holomorphically to
    \bes
    \cN_\rho\sn_\C=\{\xi\in\C^{2n}:\dist(\xi,\sn_\C)<\rho\},
    \ees
where $\sn_\C=\{(z,\zbar)\in\C^{2n}:z\in\sn\}$, and $\sup_{\:\overline{\cN_r\sn_\C}}|\psi_\C|<\eps$, then $M=\widetilde{\Psi(\sn)}=\widehat{\Psi(\sn)}$.
\end{enumerate}
\end{theorem}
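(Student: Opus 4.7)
The plan is to construct $M$ as the union of a smoothly varying family of analytic disks that perturb the standard foliation of $\bn$, and then to read off the regularity and hull statements from this construction.

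\textbf{Disks by an implicit function theorem.} The ball $\bn$ is naturally foliated by the analytic disks $\varphi^0_{x'}(\zeta)=(\sqrt{1-\|x'\|^2}\,\zeta,x')$, $\zeta\in\cdisc$, $x'\in\cpar{1}\subset\rl^{n-1}$. For each $x'$ with $\|x'\|<1$ I would look for a small holomorphic perturbation $u$ of $\varphi^0_{x'}$ such that $\varphi_{x'}:=\varphi^0_{x'}+u$ has boundary in $\Psi(\sn)$. Using a tubular neighborhood and a tangent/normal decomposition of $\sn\subset\Cn$, this attachment condition becomes a nonlinear boundary-value equation whose tangential part, when inverted by the Hilbert transform and normalized by $\varphi_{x'}(0)\in\{0\}\times\rl^{n-1}$, is a contraction for small $\psi$: this is the classical Bishop framework, and the linearization at $\psi=0$ is an isomorphism on standard H\"older spaces because all partial indices of the corresponding Riemann--Hilbert problem vanish. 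The implicit function theorem then gives a unique small solution $u=u(x',\psi)$, depending smoothly on both variables.

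\textbf{The CR-singular locus and assembly.} The central difficulty is the degeneration of the family as $\|x'\|\to 1$, where the disks collapse onto the nonisolated CR-singular $(n-2)$-sphere $\{(0,x'):\|x'\|=1\}$; this is the feature absent in the $n=2$ theory and requires uniform analysis along a continuum of elliptic points rather than at isolated ones. I would handle it by rescaling, setting $t=\sqrt{1-\|x'\|^2}$ and working in parameters $(t,\omega)=(t,x'/\|x'\|)$ so that the rescaled disks extend smoothly across $t=0$. Uniform estimates of Kenig--Webster--Huang type in suitable weighted H\"older norms should yield a $\cont^k$ family over $\cpar{1}$; roughly each differentiation in $x'$ costs three derivatives of $\psi$ through this rescaling, which accounts for the hypothesis $\psi\in\cont^{3k+7}$. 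The evaluation map $j(\zeta,x'):=\varphi_{x'}(\zeta)$ is then $\cont^2$-close to $\id$ for $\eps$ small, hence a diffeomorphism of $\bn$ onto the desired submanifold $M$, establishing (1)--(3).

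\textbf{Regularity and hulls.} Part (4) follows by rerunning the implicit function theorem in $\cont^{k,\alpha}$ for every $k$. For (5), the real-analyticity of the boundary data on each disk allows a Kenig--Webster--Moser--Huang reflection argument to extend $\varphi_{x'}$ analytically across $\bdisc$, while analytic dependence on $x'$ is obtained by complexifying $x'\in\C^{n-1}$ via $\psi_\C$ and invoking uniqueness of Bishop disks. For (6), the inclusion $M\subset\widehat{\Psi(\sn)}\subset\widetilde{\Psi(\sn)}$ is immediate from the maximum principle along each analytic disk. For the reverse, the holomorphic extension $\psi_\C$ lets the whole family $\varphi_{x'}$ be continued holomorphically in a complex parameter $x'$, exhibiting a neighborhood of $M$ in $\Cn$ as the real slice of a complex-analytic foliation; a plurisubharmonic exhaustion of $\Cn\setminus M$ built from the Levi-flat defining function of $M$, combined with Rossi's local maximum principle, forces $\widehat{\Psi(\sn)}\subset M$, and the same complexification identifies $\widetilde{\Psi(\sn)}$ with $M$.

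The chief obstacle throughout is the uniform construction and regularity of Bishop disks across the continuum of nonisolated CR singularities at $\|x'\|=1$, as opposed to the isolated elliptic points that drive the $n=2$ theory of Bedford--Gaveau; this is precisely what forces the weighted/rescaled analysis and accounts for the substantial derivative loss in the hypotheses.
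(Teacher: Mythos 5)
Your construction away from the CR--singular locus is essentially the paper's: Alexander's nonlinear Riemann--Hilbert problem solved by an implicit function theorem in H\"older spaces, with trivial partial indices at $\psi=0$, followed by the evaluation map $j(\zeta,x')=\varphi_{x'}(\zeta)$. The genuine gap is at the singular locus $\{\|x'\|=1\}$. You propose to rescale by $t=\sqrt{1-\|x'\|^2}$ and assert that ``uniform estimates of Kenig--Webster--Huang type in suitable weighted H\"older norms should yield a $\cont^k$ family over $\cpar{1}$.'' That sentence is the theorem: no such uniform weighted estimates for a rescaled disk family over the closed parameter ball exist off the shelf, and the degeneration of the linearized Riemann--Hilbert problem as $t\to 0$ (the disks shrink onto the $(n-2)$-sphere of CR singularities) is exactly what makes the problem hard. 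The paper does not attempt this; instead it first verifies (Lemma~\ref{lem_sing}) that $\Sing\Psi(\sn)$ remains an $(n-2)$-sphere of \emph{nondegenerate elliptic} points, then invokes the Kenig--Webster/Huang description of the local hull of holomorphy near such points as a black box (their Theorem~B), and glues the two manifolds by showing that every sufficiently small normalized disk attached near the singular set is a leaf of both constructions (uniqueness via the Forstneri{\v c}--Globevnik index machinery). Your heuristic that ``each differentiation in $x'$ costs three derivatives of $\psi$'' is also not where the $3k+7$ comes from; the loss is incurred in the Kenig--Webster/Huang theorem and in the $\cont^{2k+1}\to\cont^k$ loss of the partial-index reparametrization.

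Part (6) is also under-argued. For $n\ge 3$ the manifold $M$ has real codimension $n-1\ge 2$ in $\Cn$, so it has no single ``Levi-flat defining function,'' and a plurisubharmonic exhaustion of $\Cn\setminus M$ plus Rossi's principle does not apply as stated. The paper's mechanism is to holomorphically \emph{flatten} $M$ by integrating the holomorphic extension of its tangential $(1,0)$-field (this is where the uniform radius of convergence $\rho$ is used, to get a fixed-size domain for the flow), thereby writing $M$ as the intersection of $n-1$ Levi-flat hypersurface graphs, each shown to be polynomially convex by a Docquier--Grauert exhaustion argument (Lemma~\ref{lem_polcvxLF}). Your sketch never engages with the codimension issue or with why the hypothesis on $\cN_\rho\sn_\C$ is needed, and your treatment of (5) quietly borrows the complexification hypothesis of (6); the paper instead gets (5) from the analytic implicit function theorem plus an edge-of-the-wedge reflection, with no uniform convergence assumption.
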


There has been important work on the complex plateau
problem in $\Cn$, $n\geq 3$, but when $S\subset\Cn$ is a real-codimension two Bishop submanifold with nonminimal CR points. In this setting, 
$S$ is expected to bound a Levi-flat hypersurface $M$. Here we
refer to the work Dolbeaut-Tomassini-Zaitsev (\cite{DoToZa05}, \cite{DoToZa11}) and Lebl-Noell-Ravisankar (\cite{LeNoRa20}) for the construction of $M$, and Huang-Yin (\cite{HuYi16}, \cite{HuYi17}), Valentin Burcea (\cite{Bu13a}, \cite{Bu13b}), and Fang-Huang (\cite{FaHu18}) for the regularity of $M$ at the CR points of $S$. The problem can also be formulated as a
 boundary value problem for a certain degenerate elliptic equation
 (called the Levi equation) and approached from a PDE point of view.
 However, it is hard to establish the foliated structure of the weak solutions obtained via this approach. The PDE approach has been carried out in the work of Slodkowski-Tomassini (\cite{SlTo91}).

We now describe the idea of our proof of Theorem 1.1. In order to
construct $M$, we first focus on the CR singularities of $\Psi(\sn)$.
The set of CR singularities of $\Psi(\sn)$ forms
an $(n-2)$-sphere consisting only of {\em nondegenerate elliptic}
CR singularities (see Lemma~\ref{lem_sing}). A point $p$ in an
$n$-manifold $S\subset\Cn$ is a nondegenerate elliptic
CR singularity of $S$ if, after a local holomorphic change of
coordinates, $S$ near $p=0$ is given by
    \beas
     z_n&=&|z_1|^2+2\lambda\rea(z_1^2)+O(|z|^3);\\
        y_j&=&O(|z|^3),\quad  j=2,...,n-1,
    \eeas
where $\lambda\in[0,\frac{1}{2})$. As discussed earlier, the local envelope of holomorphy of a smooth (real analytic) $S$ at such a $p$ is a smooth (real analytic)
$(n+1)$-dimensional manifold foliated by Bishop disks attached to $S$.

Away from its set of CR singularities, $\Psi(\sn)$ is totally real, so we must solve a Riemann-Hilbert problem to produce the necessary attached disks. Note that this technique has been used to establish stability results for attached disks by Bedford (\cite{Be81}) and Alexander (\cite{Al81}) for certain specific totally real submanifolds, and by Forstneri{\v c} (\cite{Fo87}) and Globevnik (\cite{Gl94}) for a more general class. Our setting (away from the singular set) coincides with that of Alexander's, and we use his construction to show that for any $k$ large enough, there is an $\eps_k>0$ such that every $\eps_k$-small $\cont^{k+2}$-perturbation of $\sn$ contains the boundary of a $\cont^{k}$-smooth manifold foliated by attached holomorphic disks. However, $\eps_k$ may shrink to zero as $k$ increases, and thus we need a different approach for $\cont^\infty$-smooth perturbations. For this, we fix a sufficiently small perturbation $\Psi$, construct the ($\cont^1$-smooth) foliation attached to $\Psi(\sn)$ {\` a} la Alexander, and then, use the Forstneri{\v c}-Globevnik multi-index theory for attached disks to smoothly reparametrize the foliation near each leaf.

Finally, to establish the polynomial convexity of $M$, we globally flatten $M$ to a domain in $\C\times\rl^{n-1}$, and use a trick due to Bedford for Levi-flat graphs of hypersurface type. In order to carry out this flattening, we must assume that our perturbation is real analytic with a uniformly bounded below radius of convergence on $\sn$. Hence, the assumptions stated in $(6)$ in Theorem~\ref{thm_main}. It is not clear whether these assumptions can be done away with.

\subsection {Plan of the paper} The proof of our main result is organized as follows. In Section~\ref{sec_prelim}, we collect some preliminary observations regarding the perturbations considered in this paper. In Section~\ref{sec_away}, we establish the stability of the holomorphic disks whose boundaries in $\sn$ lie outside a neighborhood of its CR singularities. This includes Alexander's construction of the disks (and, resulting foliation) for $\cont^3$-smooth perturbations (\S \ref{subsec_exist}), and the proof of the regularity of this foliation in the case of real analytic perturbations (\S\ref{sec_realan}) and $\cont^\infty$-perturbations (\S\ref{subsec_cksmooth}). Next, in Section~\ref{sec_patch}, we complete the proof of claims $(1)$ to $(5)$ in Theorem~\ref{thm_main} by patching up the construction in Section~\ref{sec_away} with the local hulls of holomorphy of the perturbed sphere nears its CR singularities. Finally, in Section~\ref{sec_final}, we establish the polynomial convexity of the constructed manifold under the stated assumptions.

\subsection{Acknowledgments} We are extremely grateful to Xiaojun Huang for his invaluable mathematical insights and comments on the subject of this paper. In particular, we thank him for suggesting the method of flattening which is crucially used in the proof of $(6)$ in our main theorem.
\section{Notation and other preliminaries}\label{sec_prelim}

\subsection{Notation and setup}\label{subsec_notn} We will use the following notation throughout this paper. 
\begin{itemize}
\item [$\bullet$] The unit disc and its boundary in $\C$ are denoted by $\Delta$ and $\bdy\Delta$, respectively.
\item [$\bullet$] The open Euclidean ball centered at the origin and of radius $r>0$ in $\rl^{k}$ is denoted by $D^{k}(r)$. 
\item [$\bullet$] Bold small letters such as $\bt$ and $\bs$ denote vectors in $\rl^{n-1}$. For the sake of convenience, we index the components of these vectors from $2$ to $n$, i.e., $\bt=(t_2,...,t_n)$. 
\item [$\bullet$] We will denote the identity map by $\id$, where the domain will depend on the context.
\item [$\bullet$] Given any	 normed function space $\left(\mathcal{F}(K),||\cdot||_{\mathcal F}\right)$ on a set $K\subset\Cn$, we let 
	\begin{itemize}
\item $\mathcal F(K;\rl)=\{f\in\mathcal{F}(K):f\ \text{is $\rl$-valued}\}$, with the same norm.  
\item $\mathcal F(K;\rl^n)=\{(f_1,...,f_n):K\rightarrow \rl^n:f_j\in\mathcal F(K;\rl)\}$, with $||(f_j)||_\mathcal F=\sup_j||f_j||_{\mathcal F}$.  
\item $\mathcal F(K;\Cn)=\{(f_1,...,f_n):K\rightarrow \Cn:f_j\in\mathcal F(K)\}$, with $||(f_j)||_\mathcal F=\sup_j||f_j||_{\mathcal F}$.
	\end{itemize}
\item For any $n$-dimensional submanifold $M\subset\Cn$, we denote the set of CR singularities of $M$ by $\Sing M$. 
\end{itemize}

We now make some preliminary observations on the perturbations considered in this article. Let $\B_{3}$ denote an $\eps$-neighborhood of the origin in $\cont^3(\sn;\Cn)$, where $\eps>0$ will be determined later on. We let $K_s=\{z\in\Cn:\operatorname{dist}(z,\sn)<s\}$, where $s>0$ is small enough so that there is a smooth retraction $\mathfrak r$ of $K_s$ to $\sn$. We may choose an $\eps>0$ small enough so that
\begin{itemize}
\item [$*$] there is a $t\in(0,s)$ such that for every $\psi\in\B_3$, the diffeomorphism $\Psi:K_s\rightarrow\Cn$ given by $z\mapsto z+\psi(\mathfrak r(z))$ satisfies $\Psi(\sn)\subset K_{t}\subset \Psi(K_s)$; and
\item [$*$] the map $\operatorname{Inv}:\B_3\rightarrow \cont^3(K_t;\Cn)$ given by $\psi\mapsto (\Psi^{-1}-\id)|_{K_t}$ is well-defined and $\cont^2$-smooth.  
\end{itemize}  
We denote $\Psi^{-1}|_{K_t}$ by $\Phi$ and $\operatorname{Inv}(\psi)=\Phi-\id$ by $\phi$. For $\phi\in\operatorname{Inv}(\B_3)$, we let 
	\bes
		\sn_\phi=(\id +\psi)(\sn),
	\ees
where the $\phi=\operatorname{Inv}(\psi)$. Thus, $z\in K=K_t$ satisfies $z\in \sn_\phi$ if and only if $z-\phi(z)\in\sn$. 

\subsection{On the structure of \texorpdfstring{$\text{\bf Sing}(\sn_\phi)$}{a}} Next, we study the structure of $\Sing\sn_\phi$ for $\cont^3$-small $\psi$. Note that $\Sing\sn=\sn\cap\{z_1=0\}$ is an $(n-2)$-dimensional sphere, at every point of which $\sn$ has a nondegenerate elliptic CR singularity. We claim that the same is true for $\Sing\sn_\phi$, for $\psi$ small enough. Since nondegenerate ellipticity of CR singular points is stable under $\cont^2$-small perturbations, it suffices to prove the following global result.

\begin{lemma}\label{lem_sing} Let $n\geq 3$. Given $\eta>0$, there is a $\tau\in(0,1)$ such that for any $\psi\in \tau\B_3$, there exists a $\cont^{2}$-smooth diffeomorphism $\iota:\sn\rightarrow\sn$ such that $(\Psi\circ\iota)(\Sing\sn)=\Sing\sn_\phi$, and $||\Psi\circ\iota-\id||_{\cont^2(\sn;\Cn)}<\eta$. In particular, $\Sing\sn_\phi$ is an $(n-2)$-dimensional sphere.
\end{lemma}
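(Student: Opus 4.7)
My approach is to realize $\Psi^{-1}(\Sing \sn_\phi)$ as the zero set of a $\C$-valued $\cont^2$-function on $\sn$ that depends continuously on $\psi$, and then produce $\iota$ via the implicit function theorem applied in a tubular neighborhood of $\Sing\sn$.

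Since $\sn$ is orientable for $n \ge 3$, fix a nowhere-vanishing smooth section $V \in \Gamma(\Lambda^n T\sn)$, writing $V(p)=v_1(p)\wedge\cdots\wedge v_n(p)$ locally. Let $\alpha = dz_1\wedge\cdots\wedge dz_n$ on $\Cn$, and for $\psi\in\B_3$ define
\[
\mu_\phi(p) := \alpha\bigl(d\Psi_p(v_1(p)),\ldots,d\Psi_p(v_n(p))\bigr), \qquad p\in\sn.
\]
Because $d\Psi_p(v_1),\ldots,d\Psi_p(v_n)$ span $T_{\Psi(p)}\sn_\phi$ over $\rl$, the point $\Psi(p)$ is CR-singular in $\sn_\phi$ iff these vectors are $\C$-linearly dependent iff $\mu_\phi(p)=0$; in particular $\mu_0^{-1}(0)=\Sing\sn$. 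Since $\mu_\phi$ depends algebraically on $\Psi$ and $d\Psi$, a straightforward estimate yields $\|\mu_\phi-\mu_0\|_{\cont^2(\sn;\C)}\le C\|\psi\|_{\cont^3}$ for $\psi\in\B_3$.

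The key analytic input is transversality of $\mu_0$ along $\Sing\sn$: the differential $d\mu_0|_p\colon T_p\sn\to\C$ is surjective for every $p\in\Sing\sn$. I would verify this in local holomorphic coordinates centered at $p$ in which $\sn$ assumes the normal form for a nondegenerate elliptic CR singularity with Bishop invariant $\lambda=0$, namely $z_n=|z_1|^2+O(|z|^3)$ and $y_j=O(|z|^3)$ for $j=2,\ldots,n-1$. Computing the determinant in the frame $e_1=\partial_{x_1}$, $e_2=\partial_{y_1}$, $e_{j+1}=\partial_{x_j}$ (for $j=2,\ldots,n-1$) by Laplace expansion along the last row yields $\mu_0 = c\,z_1 + O(|z|^2)$ with $c\ne 0$; since $T_0\sn$ contains the complex $z_1$-plane, the leading term $c\,dz_1$ is surjective onto $\C$.

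With transversality in hand, the parametric implicit function theorem, applied along the compact submanifold $\Sing\sn \cong S^{n-2}$ using the fact that $d\mu_0$ restricted to the normal bundle $N(\Sing\sn \subset \sn)$ is a fiberwise $\C$-linear isomorphism, yields -- for $\tau$ small enough -- a $\cont^2$-section $\sigma$ of $N(\Sing\sn\subset\sn)$ of arbitrarily small $\cont^2$-norm whose image under the exponential map of a fixed background Riemannian metric on $\sn$ is precisely $\mu_\phi^{-1}(0)=\Psi^{-1}(\Sing\sn_\phi)$. Extending $\sigma$ to all of $\sn$ via a smooth cutoff supported in a tubular neighborhood of $\Sing\sn$ and exponentiating produces a $\cont^2$-small self-diffeomorphism $\iota$ of $\sn$ with $\iota(\Sing\sn)=\Psi^{-1}(\Sing\sn_\phi)$. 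The estimate $\|\Psi\circ\iota-\id\|_{\cont^2(\sn;\Cn)}<\eta$ then follows from the triangle inequality combined with the smallness of $\|\psi\|_{\cont^3}$ and $\|\iota-\id\|_{\cont^2}$. The main obstacle -- really the only non-bookkeeping step -- is verifying transversality of $\mu_0$, which relies crucially on the nondegenerate elliptic nature of the CR singularities of $\sn$; everything after that is standard implicit-function-theorem and tubular-neighborhood machinery.
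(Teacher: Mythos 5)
Your proposal is correct and takes essentially the same route as the paper: both realize the CR-singular set as the zero locus of a complex determinant (the paper uses $\det\operatorname{Jac}_\C(R\circ\Phi)$ for the defining function $R$, you use the complex determinant of a pushed-forward tangent frame --- equivalent characterizations of total reality), verify that its differential is nondegenerate transverse to $\Sing\sn$, and then apply the implicit function theorem over the compact $S^{n-2}$ followed by a cutoff/graph-moving diffeomorphism. The only cosmetic difference is your detour through the elliptic normal form to check transversality, which is unnecessary (the paper reads it off directly from $\det\operatorname{Jac}_\C R=\overline{z_1}\,(1/2i)^{n-1}$) but valid.
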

\begin{proof} We let $U^{n-1}$ denote the unit sphere in $\rl^n$. Consider the map $\Theta:D^2(1)\times U^{n-2}\rightarrow\Cn$ given by
	\bes
		\Theta:(a,b,s)
			\mapsto \left(a+ib,\sqrt{1-a^2-b^2}s+i0\right).
	\ees
Note that $\Theta$ parametrizes (and is invertible on) $\sn$ off the $z_1$-axis, and $\Theta^{-1}(\Sing\sn)=\{(0,s):s\in U^{n-2}\}$.

Now, let $R:\Cn\rightarrow\rl^n$ be given by $(z_1,z_2,...,z_n)\mapsto\left(|z_1|^2+\cdots+|z_n|^2-1,\ima(z_2),...,\ima(z_n)\right) $. We note that since $\Sing\sn=\{z\in\sn:\operatorname{rank}\operatorname{Jac}_\C R(z)<n\}$, and 
	\beas
	\operatorname{Jac}_\C(z_1,...,z_n)=
    \begin{pmatrix}
	   \overline{z_1} &  \overline{z_2} & \cdots &  \overline{z_n}\\
    	0		& \frac{1}{2i} & \cdots & 0\\ 
   	\vdots & \vdots & \ddots & \vdots\\
		0 & \cdots &\cdots & \frac{1}{2i}\\
		\end{pmatrix},
		\eeas
we have that $\Sing\sn=\{z\in\sn:\det\operatorname{Jac}_\C(z)=0\}$. We let $J:\B_3\times{D^2(1)}\times U^{n-2}\rightarrow\rl^2$ be given by $
		(\psi,a,b,s)\mapsto \det\operatorname{Jac}_\C (R\circ \Phi)(\Psi\circ \Theta(a,b,s))$, where $\Psi$ and $\Phi$ are related to $\psi$ as discussed above. Note that $J$ is a $\cont^2$-smooth map such that

	\begin{itemize}
	\item  $\Theta(a,b,s)\in\Psi^{-1}(\Sing\sn_\phi)$ if and only if $J(\psi,a,b,s)=0$ (after possibly shrinking $\B_3$); 
	\item For any $s\in U^{n-2}$, $J(0,0,s)=0$ and $D_{a,b}J(0,0,s)=(\frac{1}{2i})^{n-1}\begin{pmatrix}1&0\\0&-1\end{pmatrix}$.
	\end{itemize}

Thus, by the implicit function theorem (and the compactness of $U^{n-2}$), there is a $\tau\in(0,1)$, a neighborhood $V$ of $0$ in $\C$, and a $\cont^2$-smooth map $\Gamma:\tau\B_3\times U^{n-2}\rightarrow\C$ such that $J(\psi,z_1,s)=0$ if and only if $z_1=\Gamma(\psi,s)$, for any $(\psi,z_1,s)\in \tau\B_3\times V\times U^{n-2}$. 

Thus, in the parameter space ${D^2(1)}\times U^{n-2}$, $\Psi^{-1}(\Sing\sn_\phi)$ pulls back to the $\cont^2$-smooth graph $\mathscr G_\psi=(\Gamma(\psi,s),s)$. By shrinking $\tau$ further, we may assume that $\mathscr G_\psi$ lies in a thin neighborhood $N$ of $\mathscr G_0$. As both $\mathscr G_0$ and $\mathscr G_\psi$ are graphs over $U^{n-2}$, there is a diffeomorphism $\wt\iota$ of ${D^2(1)}\times U^{n-2}$ that is $\cont^2$-close to identity, maps $\mathscr G_0$ to $\mathscr G_\psi$ and is identity outside $N$. Setting $\iota=\Theta \circ\wt\iota\circ\Theta^{-1}$, we obtain the necessary map.
\end{proof}

\section{Away from the set of CR singularities}\label{sec_away}
 \subsection{Preliminaries}\label{subsec_prelim} In this section, we define some function spaces and maps that will be used throughout this construction. First, we recall some basic notions from infinite-dimensional analysis. Recall that a map $T:E \rightarrow F$ between Banach spaces is $k$-times continuously differentiable, or $\cont^k$-smooth, if it admits $k$ continuous Fr{\'e}chet derivatives. That is, for each $j=1,...,n$, there is a continuous map $D^jT$ from $E$ into $\sL^j(E,F)$ ---- the space of bounded $j$-linear maps from $E\oplus\cdots\oplus E$ ($j$ copies) to $F$ endowed with the standard topology --- satisfying
\bes
    \lim_{\norm{h}_E \rightarrow 0}
		\frac{\norm{D^{j-1}(x+h) - D^{j-1}(x) - D^{j}T(x)(h)}_{\sL^{j-1}(E,F)}}
			{\norm{h}_E} = 0.
\ees
If $E=E_1\oplus\cdots\oplus E_k$, then the partial Fr{\' e}chet derivatives $D_j T(x_1,...,x_k)$ are defined by analogy with partial derivatives from ordinary calculus. The map $T$ is said to be analytic at $a\in E$ if there is a $\rho>0$, and a sequence of maps $T^j\in\sL^j(E,F)$ with $\sum_{j\geq 0}||T_j||\:\rho^j<\infty $, such that $T(a+h)=T(a)+\sum_{j\geq 1} T_j(h,...,h)$, for $||h||_E<<1$.  Alternatively,  if $T$ is infinitely differentiable, it suffices to produce a neighborhood $V_a$ of $a$ and constants $c, \rho$ such that $\norm{D^jT} \le c j!\:\rho^{-j}$ on $V_a$. It follows that the composition or product of analytic maps is again analytic. Finally, of particular import for this paper is the implicit function theorem.
\begin{thms}[Implicit Function Theorem for Banach Spaces]
Let $E, F, G$ be Banach spaces, $T$ a continuously differentiable mapping of an open set $A$ of $E \times F$ into $G$. Let $(x_0, y_0) \in A$ be such that $T(x_0, y_0) = 0$, and $D_2T(x_0, y_0)$ is a linear homeomorphism of $F$ onto $G$. Then, there is an open neighborhood $U_0$ of $x_0$ in $E$ such that, for every open connected neighborhood $U$ of $x_0$, contained in $U_0$, there is a unique continuous mapping $u$ of $U$ into $F$ such that $u(x_0) = y_0$, $(x, u(x)) \in A$, and $T(x, u(x)) = 0$ for any $x \in U$. Furthermore, $u$ has the same regularity as $T$.
\end{thms}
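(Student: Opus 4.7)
The plan is to derive the theorem from the Banach fixed-point theorem with parameters. By translating and by precomposing the target with the inverse of $L := D_2 T(x_0,y_0) : F \to G$, I may assume without loss of generality that $(x_0,y_0) = (0,0)$, that $G = F$, and that $D_2 T(0,0) = \mathrm{Id}_F$. Define $\Phi : A \to F$ by $\Phi(x,y) = y - T(x,y)$, so that $\Phi(0,0) = 0$ and $D_2\Phi(0,0) = 0$; a fixed point of $\Phi(x,\cdot)$ is precisely a zero of $T(x,\cdot)$.

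Using joint continuity of $D_2 \Phi$, I would choose $r > 0$ and an open neighborhood $U_0$ of $0$ in $E$ so that $\|D_2\Phi(x,y)\| \le 1/2$ for every $(x,y) \in U_0 \times \overline{B_F(0,r)}$ and $\|\Phi(x,0)\| \le r/2$ on $U_0$. The mean-value inequality then yields that for each fixed $x \in U_0$, the map $y \mapsto \Phi(x,y)$ sends $\overline{B_F(0,r)}$ into itself and is a $1/2$-contraction. The Banach fixed-point theorem produces a unique $u(x) \in \overline{B_F(0,r)}$ with $T(x,u(x)) = 0$. Continuity of $u$ follows from the parametric estimate
\[
\|u(x) - u(x')\| \le \|\Phi(x, u(x')) - \Phi(x', u(x'))\| + \tfrac{1}{2}\|u(x) - u(x')\|,
\]
and an open-closed argument on the connected $U \subset U_0$ propagates the uniqueness of the continuous selection from a neighborhood of $x_0$ to all of $U$.

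For the regularity statement, I would differentiate the relation $T(x,u(x)) = 0$. Since the set of linear homeomorphisms is open in $\sL^1(F,G)$, $D_2 T(x,u(x))$ remains invertible for $x$ near $0$, and a short calculation identifies $u$ as Fr\'echet differentiable with
\[
Du(x) = -\bigl[D_2 T(x,u(x))\bigr]^{-1} \circ D_1 T(x,u(x)).
\]
The right-hand side is continuous in $x$, so $u$ is $\mathcal{C}^1$; bootstrapping via the chain rule and the smoothness of the inversion map $A \mapsto A^{-1}$ on $\mathrm{GL}(F,G)$ promotes $u$ to the same $\mathcal{C}^k$-class as $T$. For the real-analytic case, either one iterates this formula with quantitative bounds of the form $\|D^j u\| \le c j! \rho^{-j}$, or, more cleanly, one repeats the contraction argument verbatim for the holomorphic extension of $T$ between complexifications of $E$ and $F$, thereby realizing $u$ as the restriction of a holomorphic map.

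The principal obstacle, though more bookkeeping than conceptual, is arranging the contraction radius $r$ and the parameter neighborhood $U_0$ so that invariance of $\overline{B_F(0,r)}$ and the $1/2$-contraction property hold uniformly in $x$; once this has been set up, every other ingredient (differentiability bootstrap, analytic bounds) follows from standard manipulations.
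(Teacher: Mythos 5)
Your proof is correct and is the classical contraction-mapping argument; the paper itself states this theorem without proof, citing Dieudonn\'e \cite{Di60}, and your argument is essentially the one found there (reduction to $D_2T=\mathrm{Id}$, the parametrized Banach fixed-point theorem on a uniformly invariant ball, the open--closed argument for uniqueness on connected $U$, and the bootstrap $Du(x)=-[D_2T(x,u(x))]^{-1}D_1T(x,u(x))$ together with complexification for the analytic case). The only step you assert rather than verify is the Fr\'echet differentiability of $u$ before differentiating the relation $T(x,u(x))=0$; this follows routinely from the local Lipschitz estimate $\|u(x)-u(x')\|\le 2\|\Phi(x,u(x'))-\Phi(x',u(x'))\|$ your contraction setup already provides, so it is a matter of bookkeeping, not a gap.
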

Similarly, one has the inverse function theorem for Banach spaces. More details on this functional analysis background can be found in \cite{Di60}.

We now collect some functions spaces on the unit circle $\bdisc$. Given $0<\alp<1$, let 	
	\bes
		\ckm{0}{\alp}{}(\bdisc)=
			\left\{f\in\cont(\bdisc;\C):||f||_\alp
				=||f||_\infty+
					\sup_{\substack{x,y\in \bdisc\\x\neq y}}
						\frac{|f(x)-f(y)|}{|x-y|^\alp}<\infty
					\right\},
	\ees
where $||f||_\infty=\sup_{x\in \bdisc}||f(x)||$. For $k\in\N$, let
	\bes
		\ckm{k}{\alp}{}(\bdisc)=
			\left\{f\in\cont^k(\bdisc;\C)
				:||f||_{k,\alp}=\sum_{j=0}^k||D^j\!f||_\alp<\infty\right\}.
	\ees
Note that we use notation $\ckm{k}{\alp}(\bdisc;\rl)$, $\ckm{k}{\alp}(\bdisc;\rl^n)$ and $\ckm{k}{\alp}(\bdisc;\Cn)$ according to the convention established in Section~\ref{sec_prelim}. We will use the notation $B_{k,\alp}(f,r)$ to denote the ball of radius $r$ centered at $f$ in the Banach space $\ckm{k}{\alp}(\bdisc)$ (or in $\ckm{k}{\alp}(\bdisc;\Cn)$, depending on the context).

We also work with the Banach space
	\be\label{eq_holhol}
		\ockm{k}{\alp}(\bdisc)=\{f\in\ckm{k}{\alp}(\bdisc):
		\exists\ \wt f\in\hol(\Delta)\cap\ckm{k}{\alp}(\cdisc)\ 
		\text{such that}\ \wt f|_{\bdisc}=f\}
	\ee
with the same norm as that on $\ckm{k}{\alp}(\bdisc)$. It is known that if $f$ and $\wt f$ are as above, then $||\wt f||_{\ckm{k}{\alp}(\cdisc)}\lesssim||f||_{k,\alp}$. The following lemma will prove useful later.

\begin{lemma}\label{lem_ev}
For any $k\in\N$, the map $\ev:\cdisc\times\ockm{k}{\alp}{}(\bdisc;\Cn)\rightarrow\Cn$ given by $\ev(\xi,f)=\wt f(\xi)$ is $\cont^k$-smooth on $\cdisc\times\ockm{k}{\alp}{}(\bdisc;\Cn)$ and real-analytic on $\Delta\times\ockm{k}{\alp}{}(\bdisc;\Cn)$. 
\end{lemma}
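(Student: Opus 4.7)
The plan is to decouple the two claims and exploit the linearity of $\ev$ in $f$. Since the holomorphic-extension map $E:f\mapsto\wt f$ is a bounded linear operator from $\ockm{k}{\alp}(\bdisc;\Cn)$ into $\ckm{k}{\alp}(\cdisc;\Cn)$ (by the estimate cited immediately after \eqref{eq_holhol}), we have a factorisation $\ev=\mathrm{eval}\circ(\id\times E)$, where $\mathrm{eval}:\cdisc\times\ckm{k}{\alp}(\cdisc;\Cn)\to\Cn$ is ordinary evaluation. Since composition with a bounded linear map preserves $\cont^k$-smoothness and real-analyticity, it suffices to prove (a) $\mathrm{eval}$ is $\cont^k$-smooth on $\cdisc\times\ckm{k}{\alp}(\cdisc;\Cn)$, and (b) $\ev$ itself is real-analytic on $\Delta\times\ockm{k}{\alp}(\bdisc;\Cn)$ (for the latter one works directly with the Cauchy representation, since interior holomorphy in $\xi$ is lost at the boundary).

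For (a), write a tangent vector as $(\delta,h)\in\C\oplus\ckm{k}{\alp}(\cdisc;\Cn)$. Since $\mathrm{eval}$ is linear in $g$, all partial Fr\'echet derivatives of order $\ge 2$ in the $g$-direction vanish, and a direct calculation gives, for $1\le j\le k$,
\bes
    D^j\mathrm{eval}(\xi,g)\bigl[(\delta_1,h_1),\ldots,(\delta_j,h_j)\bigr]
    = D^j g(\xi)(\delta_1,\ldots,\delta_j)
    + \sum_{i=1}^{j} D^{j-1}h_i(\xi)\bigl(\delta_1,\ldots,\widehat{\delta_i},\ldots,\delta_j\bigr).
\ees
Each summand on the right is jointly continuous in $(\xi,g,h_1,\ldots,h_j)$: the $\xi$-derivatives $D^{\ell}g$ and $D^{\ell-1}h_i$ with $\ell\le k$ are uniformly $\alp$-H\"older on $\cdisc$ with H\"older norm controlled by the ambient $\ckm{k}{\alp}$-norm, and evaluation at $\xi$ is a $\xi$-parametrised family of bounded linear functionals varying continuously in $\xi$. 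The H\"older cushion $\alp>0$ is exactly what sustains joint continuity at the top order $j=k$.

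For (b), fix $(\xi_0,f_0)\in\Delta\times\ockm{k}{\alp}(\bdisc;\Cn)$ and set $\rho=\dist(\xi_0,\bdisc)>0$. The Cauchy integral
\bes
    \wt f(\xi) = \frac{1}{2\pi i}\int_{\bdisc}\frac{f(\zeta)\,d\zeta}{\zeta-\xi}, \qquad \xi\in\Delta,
\ees
combined with the geometric expansion $(\zeta-\xi)^{-1}=\sum_{j\ge 0}(\xi-\xi_0)^j(\zeta-\xi_0)^{-j-1}$ valid for $|\xi-\xi_0|<\rho$, produces
\bes
    \ev(\xi,f)=\sum_{j\ge 0}(\xi-\xi_0)^j L_j(f), \qquad
    L_j(f):=\frac{1}{2\pi i}\int_{\bdisc}\frac{f(\zeta)\,d\zeta}{(\zeta-\xi_0)^{j+1}},
\ees
with $L_j$ bounded linear and $\|L_j\|\le \rho^{-j-1}$. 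Setting $\delta=\xi-\xi_0$ and $h=f-f_0$, and collecting by total degree,
\bes
    \ev(\xi_0+\delta,f_0+h) - \ev(\xi_0,f_0)
    = \sum_{n\ge 1} P_n(\delta,h), \qquad
    P_n(\delta,h) := \delta^n L_n(f_0) + \delta^{n-1}L_{n-1}(h),
\ees
a continuous homogeneous polynomial of total degree $n$ on $\C\oplus\ockm{k}{\alp}(\bdisc;\Cn)$. Its norm is bounded by a constant multiple of $\rho^{-n}(1+\|f_0\|_\infty)$, so the series converges absolutely in a bidisk about the origin; polarising yields the bounded symmetric multilinear maps $T_n$ demanded by the paper's definition of analyticity.

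The main obstacle is the joint-continuity step in (a) at order $k$: each partial $\xi$-derivative costs one order of regularity, so without the H\"older cushion $\alp>0$ one would run out of derivatives at $j=k$. In contrast, (b) is essentially routine once the Cauchy representation is in hand, thanks to the holomorphic dependence of the kernel $(\zeta-\xi)^{-1}$ on $\xi$ for $\xi\in\Delta$.
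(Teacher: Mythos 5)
Your proof is correct and follows essentially the same route as the paper: the same derivative formula for $\ev$ (linear in the function slot, with the H\"older condition on the top-order $\xi$-derivatives supplying joint continuity), and the same power-series expansion in the interior, since your Cauchy-kernel coefficients $L_j(f)$ are exactly the paper's $\wt f^{(j)}(\xi_0)/j!$. The factorization through the bounded extension operator $f\mapsto\wt f$ is only a cosmetic repackaging of the paper's opening observation.
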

\begin{proof} We note that $f\mapsto\wt f$ is a bounded linear transformation. Now, we have that
\bes
 D^j \ev(\xi,f)(\zeta_1,h_1)\cdots(\zeta_j,h_j)
		=\wt f^{(j)}(\xi)\zeta_1\cdots\zeta_j+\sum_{\ell=1}^j
		 \wt {h_\ell}^{(j-1)}(\xi)\frac{\zeta_1\cdots\zeta_j}{\zeta_\ell}.
\ees
Since all the derivatives of $f$ up to order $k$ satisfy a H{\" o}lder condition of the form 
	\bes
		|\wt f^{(j)}(\xi_1)-\wt f^{(j)}(\xi_2)|\leq ||f||_{k,\alp}|\xi_1-\xi_2|^\alp,
			\qquad \xi_1,\xi_2\in\cdisc,
	\ees
the continuity of $D^je$ for $j\leq k$ follows. Thus, we obtain the first part of the claim.

Next, we observe that for any $(\xi,f)\in\Delta\times\ockm{k}{\alp}(\bdisc;\Cn)$, we may write 
	\bes
		\ev\big((\xi,f)+(\zeta,h)\big)=\ev(\xi,f)
			+\sum_{j\geq 1}A_j(\underbrace{(\zeta,h)\cdots (\zeta,h)}
				_{j\ \text{times}})
	\ees
whenever $f,h\in \ockm{k}{\alp}(\bdisc;\Cn)$ and $|\zeta-\xi|<1-|\xi|$, where $A_j$ is the symmetric $j$-linear map 
	\bes
		\big((\zeta_1,h_1),...,(\zeta_j,h_j)\big)
			\mapsto \frac{\wt f^{(j)}(\xi)}{j!}\zeta_1\cdots\zeta_j
				+\sum_{\ell=1}^j\frac{\wt h_\ell^{(j-1)}(\xi)}{j!}
					\frac{\zeta_1\cdots\zeta_j}{\zeta_\ell}.
	\ees
By Cauchy's estimates, we have that $||A_j||\leq (1+||f||_{k,\alp})$, $j\in\N$. Thus, $\sum_{j\in\N}||A_j||r^{j}<\infty$ for any $r<1$, which establishes the real-analyticity of $\ev$ at $(\xi,f)$.
\end{proof}

\begin{remark}\label{rem_int}
Here onwards, we will identify $f$ and $\wt f$, i.e., for $f\in\ockm{k}{\alp}{}(\bdisc;\Cn)$ and $\xi\in \Delta$, we will denote $\wt f(\xi)$ simply by $f(\xi)$. 
\end{remark}

Next, given $f\in\ckm{k}{\alp}(\bdisc;\rl)$, we let $\cH(f)$ be given by
	\be\label{eq_hilb}
		f=a_0+\sum_{n=1}^\infty a_ne^{in\theta}+\overline{a_n}e^{-in\theta}
		\mapsto 
		\cH(f)=\sum_{n=1}^\infty -ia_ne^{in\theta}+i\overline{a_n}e^{-in\theta}
	\ee
Note that $\cH$ is the {\em standard Hilbert transform}. It is well known that $\cH$ is a bounded linear transformation from $\ckm{k}{\alp}(\bdisc;\rl)$ to itself. We then define $\cJ:\ckm{k}{\alp}(\bdisc;\rl)\rightarrow \ckm{k}{\alp}(\bdisc)$ as
	\bes
		\cJ:f\mapsto f+i\cH(f).
	\ees
Clearly, $\cJ$ is also a bounded linear transformation with $\cJ(\ckm{k}{\alp}(\bdisc))\subset\ockm{k}{\alp}(\bdisc)$.  Note that if $f$ is as in \eqref{eq_hilb}, then $\cJ(f)(0)=a_0$. In an abuse of notation, the component-wise application of $\cH$ and $\cJ$ on elements in $\ckm{k}{\alp}(\bdisc;\rl^n)$ is also denoted by $\cH$ and $\cJ$, respectively.  

Lastly, we fix a parametrization for the holomorphic discs that foliate the hull of $\sn$. For any $(\xi,\bt)\in\cdisc\times\opar{1}$, we let $
		\fg_\bt(\xi)=\left(\sqrt{1-||\bt||^2}\xi,\bt\right)$. The perturbed sphere will be shown to be foliated by boundaries of discs that are perturbations of $\fg_\bt$. As discussed in Remark~\ref{rem_int}, we also use $\fg_\bt$ to denote $\fg_\bt|_{\bdisc}$.

\subsection{Existence of the foliation}\label{subsec_exist} In this section, we follow Alexander's approach (see \cite{Al81}) to construct a $\cont^1$-smooth manifold $M_{\text{TR}}\subset\Cn$ that is foliated by holomorphic discs whose boundaries are attached to the totally real part of $\sn_\phi$. For this, we first solve the following nonlinear Riemann-Hilbert problem: find a function $f:\cdisc\rightarrow\C$ that is holomorphic on $\Delta$ and whose boundary values on $\bdisc$ satisfy $|f(z)-\gamma(z)|=\sig(z)$, where $\gam(z)$ is close to $0$ (in some appropriate norm) and $\sigma$ is a positive function on $\bdy\Delta$. The solutions to the above problem give analytic discs attached to the torus $\{|z_1|=1,|z_2-\gam(z_1)|=\sig(z_1)\}$ in $\CC$. 

\begin{lemma}\label{lem_disc1d} Let $\alp\in(0,1)$. There is an open set $\Om\subset\ckm{1}{\alp}(\bdisc)\oplus\ckm{1}{\alp}(\bdisc;\rl)$ such that
	\bes
		\{(0,\sig):\sig>0\}\subset\Om\subset\{(\gam,\sig):\sig>0\},
	\ees 
and there is an analytic map $E:\Om\rightarrow\ockm{1}{\alp}(\bdisc)$ such that 
	\begin{itemize}
		\item [$(i)$] if $(\gam,\sig)\in\Om$ and $E(\gam,\sig)=f$, then $|f-\gam|=\sig$ on $\bdisc$, $f(0)=0$, and $f'(0)>0$;
		\item [$(ii)$] $E(0,c)(\xi)\equiv c\,\xi$ for $\xi\in \bdisc$, when $c$ is a positive constant function.
	\end{itemize}
\end{lemma}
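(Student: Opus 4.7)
The plan is to apply the Banach-space implicit function theorem to the map
\begin{equation*}
\Phi(\gam,\sig,f) := |f - \gam|^2 - \sig^2 \quad \text{on } \bdisc,
\end{equation*}
viewed as a real-analytic map from $\ckm{1}{\alp}(\bdisc) \oplus \ckm{1}{\alp}(\bdisc;\rl) \oplus Y$ into $\ckm{1}{\alp}(\bdisc;\rl)$, where
\begin{equation*}
Y := \{f \in \ockm{1}{\alp}(\bdisc) : f(0) = 0,\ f'(0) \in \rl\}
\end{equation*}
is a closed real subspace of $\ockm{1}{\alp}(\bdisc)$ regarded as a real Banach space (so that complex conjugation is admissible). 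The map $\Phi$ is analytic because it is a degree-two polynomial in its arguments, assembled from bounded $\rl$-(anti)linear operations (evaluation on $\bdisc$, conjugation, and pointwise multiplication). As a reference solution I would take the outer-function-type disc
\begin{equation*}
f_\sig(\xi) := \xi\,e^{G_\sig(\xi)}, \qquad G_\sig := \cJ(\log\sig);
\end{equation*}
the identities $\rea G_\sig = \log\sig$ on $\bdisc$ and $G_\sig(0) \in \rl$ give $|f_\sig| = \sig$ on $\bdisc$, $f_\sig(0) = 0$, and $f_\sig'(0) = e^{G_\sig(0)} > 0$. For a positive constant $\sig = c$, this collapses to $f_c(\xi) = c\xi$, which by IFT uniqueness will yield property $(ii)$.

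The crux is verifying that $D_f\Phi|_{(0,\sig,f_\sig)}$ is a linear homeomorphism from $Y$ onto $\ckm{1}{\alp}(\bdisc;\rl)$. The Fr\'echet derivative is $h \mapsto 2\rea\bigl(h\,\overline{f_\sig}\bigr)$, and the essential simplification comes from the on-$\bdisc$ identity $\overline{f_\sig} = |f_\sig|^2/f_\sig = \sig^2/f_\sig$, turning the operator into $h \mapsto 2\sig^2\,\rea(h/f_\sig)$. To invert it, given $\psi \in \ckm{1}{\alp}(\bdisc;\rl)$, I would write $h = \xi H$ with $H \in \ockm{1}{\alp}(\bdisc)$ (motivated by the factorization $f_\sig = \xi e^{G_\sig}$), so that $h/f_\sig = H e^{-G_\sig}$ is itself holomorphic. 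The equation $\rea\bigl(H e^{-G_\sig}\bigr) = \psi/(2\sig^2)$ is then solved uniquely by
\begin{equation*}
H\, e^{-G_\sig} = \cJ\!\left(\tfrac{\psi}{2\sig^2}\right), \qquad \text{i.e.,} \qquad h = \xi\,\cJ\!\left(\tfrac{\psi}{2\sig^2}\right)\,e^{G_\sig} \in Y,
\end{equation*}
the membership in $Y$ being ensured by $\cJ(\psi/(2\sig^2))(0) \in \rl$ and $e^{G_\sig(0)} \in \rl$. Boundedness of the inverse follows from the boundedness of $\cJ$, a uniform lower bound on $\sig$, and the fact that multiplication by $e^{\pm G_\sig} \in \ockm{1}{\alp}(\bdisc)$ is a bounded operation on $\ockm{1}{\alp}(\bdisc)$. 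I expect this invertibility step to be the main obstacle, since everything else is either standard Banach-space IFT machinery or immediate computation.

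With the linearization inverted, the Banach-space IFT supplies, for each $\sig_0 > 0$, an open neighborhood $U_{\sig_0}$ of $(0,\sig_0)$ and a unique analytic map $E_{\sig_0}:U_{\sig_0} \to Y$ with $E_{\sig_0}(0,\sig_0) = f_{\sig_0}$ and $\Phi(\gam,\sig,E_{\sig_0}(\gam,\sig)) = 0$. Local uniqueness of IFT solutions along connected paths $t \mapsto (0,\sig_t)$ in the reference set $\{\sig > 0\}$ allows these local pieces to be glued into a single analytic map $E : \Om \to Y$ on $\Om := \bigcup_{\sig_0 > 0} U_{\sig_0}$. After shrinking each $U_{\sig_0}$, continuity of the evaluation $f \mapsto f'(0)$ preserves $f'(0) > 0$ throughout $\Om$, completing property $(i)$.
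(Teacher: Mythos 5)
Your proposal is correct in substance but follows a genuinely different route from the paper. The paper does not linearize the defining equation $|f-\gam|=\sig$ directly; instead it changes the unknown to an auxiliary function $\eta$ via the ansatz $f=\fg\, e^{\cJ(\log\sig)}e^{-\cJ(\log|\fg-\eta|)}$, reduces to the case $\sig\equiv 1$ by the scaling $\gam\mapsto \gam e^{-\cJ(\log\sig)}$, and then applies the inverse function theorem \emph{once}, to the single map $Q(\eta)=\eta e^{-\cJ(\log|\fg-\eta|)}$ at $\eta=0$ (where $Q'(0)=\id$ is immediate). The scaling reduction is what lets the paper obtain an open set $\Om$ containing the entire ray $\{(0,\sig):\sig>0\}$ from one application of the theorem, with no patching, and it makes the joint analyticity of $E$ in $(\gam,\sig)$ a formal consequence of composing analytic maps. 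Your approach --- implicit function theorem applied directly to $\Phi(\gam,\sig,f)=|f-\gam|^2-\sig^2$ on the normalized subspace $Y$, with the linearization $h\mapsto 2\sig^2\rea(h/f_\sig)$ inverted by the substitution $h=\xi H$ and the operator $\cJ$ --- is a clean and arguably more conceptual Riemann--Hilbert computation (it exhibits the index structure explicitly: the three real conditions defining $Y$ exactly kill the kernel). What it costs you is the gluing step, and that is the one place you should be more careful: the normalization $f(0)=0$, $f'(0)>0$ does \emph{not} globally determine the solution (e.g.\ for $\gam=0$, $\sig=c$, the discs $f(\xi)=c\,\xi\cdot\frac{|a|}{a}\frac{a-\xi}{1-\bar a\xi}$ are also normalized solutions), so uniqueness is only local in the Banach space $Y$. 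The patching therefore requires checking that overlapping local solution maps agree, e.g.\ by noting that each $E_{\sig_0}$ restricts to $\sig\mapsto f_\sig$ on the reference ray, choosing the $U_{\sig_0}$ to be balls (so overlaps are convex and meet the ray), and invoking continuity plus the local uniqueness from the implicit function theorem to propagate agreement; your one-sentence appeal to ``local uniqueness along connected paths'' is the right idea but is where the argument needs to be fleshed out.
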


\begin{proof} 
The idea of the proof is as follows. Given $(\gam,\sig)\in\ckm{1}{\alp}(\bdisc)\oplus\ckm{1}{\alp}(\bdisc;\rl)$ with $\sigma>0$, if there is an $\eta\in\ckm{1}{\alp}(\bdisc)$ that satisfies
	\be\label{eq_alpbet}
		\gam=\eta e^{\cJ(\log\sig)-\cJ(\log|\mathfrak{g}-\eta|)},
	\ee
where $\fg(\xi)=\xi$, $\xi\in \bdisc$, and $\cJ:\ckm{1}{\alp}(\bdisc;\rl)\rightarrow\ockm{1}{\alp}(\bdisc)$ is the operator defined in Section~\ref{subsec_prelim}, then, setting $E(\gam,\sig)=f=\fg e^{\cJ(\log\sig)}e^{-\cJ(\log|\fg-\eta|)}$, we have that
	\be\label{eq_soln}
		|f-\gam|
		=\left|\fg e^{\cJ(\log\sig)}e^{-\cJ(\log|\fg-\eta|)}
			-\eta e^{\cJ(\log\sig)}e^{-\cJ(\log|\fg-\eta|)}\right|
		=e^{\log\sig}|\fg-\eta|e^{-\log|\fg-\eta|}=\sig.
	\ee
Moreover, $f(0)=0$ and $f'(0)=e^{(J\log(\sigma/|\fg-\eta|))(0)}>0$. So, we must solve for $\eta$ in \eqref{eq_alpbet} for $(\gam,\sig)$ close to $(0,\sig)$ when $\sig>0$. But any solution of \eqref{eq_alpbet} corresponding to $(\gam,\sig)$ is also a solution corresponding to $(\gam e^{-\cJ(\log\sig)},1)$. Thus, it suffices to establish the solvability of \eqref{eq_alpbet} near $(0,1)\in \ckm{1}{\alp}(\bdisc)\oplus\ckm{1}{\alp}(\bdisc;\rl)$.

Let $U=\{\eta\in\ckm{1}{\alp}(\bdisc):||\eta||_\infty<1\}$, which is an open set in $\ckm{1}{\alp}{}(\bdisc)$. For $\eta\in U$, let $A(\eta)=e^{-\cJ(\log|\fg-\eta|)}$. We claim that 
	\be\label{eq_claimA}
		\text{$A:U\rightarrow\ockm{1}{\alp}{}(\bdisc)$ is an analtyic map with $A(0)=1$.}
	\ee
Further, letting $Q(\eta)=\eta\cdot A(\eta)$, we claim that
	\be\label{eq_claimQ}
	\text{ $Q:U\rightarrow \ckm{1}{\alp}{}(\bdisc)$ is an analytic map with
			 $Q(0)=0$ and $Q'(0)=\id$.}
	\ee 
Assuming \eqref{eq_claimA} and \eqref{eq_claimQ} for now, we can apply the inverse function theorem for Banach spaces to $Q$ to obtain open neighborhoods $\cU\subseteq U$ and $V$ of $0$ in $\ckm{1}{\alp}{}(\bdisc)$ such that $Q$ is an analtyic diffeomorphism from $\cU$ onto $V$. Set 
	\bes
		\Om=\{(\gam,\sig)\in
			\ckm{1}{\alp}(\bdisc)\oplus\ckm{1}{\alp}(\bdisc;\rl)
				:\sigma>0\ \text{and}\ \gam e^{-\cJ(\log\sig)}\in {V}\}
	\ees
and observe that $\eta=Q^{-1}(\gam e^{- \cJ(\log\sig)})$ solves \eqref{eq_alpbet} for every $(\gam,\sig)\in\Om$.

Now set $E_{\pm}:\ckm{1}{\alp}(\bdisc;\rl_{>0})\rightarrow \ockm{1}{\alp}{}(\bdisc)$ by $E_{\pm}(\sig)=e^{\pm \cJ(\log\sig)}$. The proof of \eqref{eq_claimA} below can be imitated to check that $E_{\pm}$ are analytic maps. Further, $M_\fg:\ockm{1}{\alp}{}(\bdisc)\rightarrow\ockm{1}{\alp}{}(\bdisc)$ defined by $M_\fg(h)=\fg h$ is also analytic since it is a bounded linear transformation. Thus, the map $E:\Om\rightarrow\ockm{1}{\alp}{}(\bdisc)$ given by
	\beas
		E(\gam,\sig)&=& 
	E_{+}(\sig) \left(M_\fg \circ A\circ Q^{-1}\right)(\gam E_{-}(\sig))
	\eeas
is analytic. As shown in \eqref{eq_soln}, it satisfies $(i)$. Also, $E(0,c)=E_+(c) M_\fg(1)=c\fg$, for $c>0$. 

We must now prove \eqref{eq_claimA} and \eqref{eq_claimQ}. For $\eqref{eq_claimA}$, we first consider the map $L:\eta\mapsto\log|\fg-\eta|$. We use the fact that if $f\in\ckm{1}{\alp}{}(\bdisc)$ and $g\in\cont^2(f(\bdisc))$, then $g\circ f\in\ckm{1}{\alp}{}(\bdisc)$. We apply this fact to $f=\fg-\eta$ for $\eta\in U$, and $g(\cdot)=\log(|\cdot|)$ to obtain that $L(U)\subset\ckm{1}{\alp}{}(\bdisc;\rl)$. Now, for a fixed $\eta\in U$ and a small $h\in\ckm{1}{\alp}{}(\bdisc)$, we have that
	\beas
		L(\eta+h)-L(\eta)&=&\log|\fg-\eta-h|-\log|\fg-\eta|
			=\log\left|1-\frac{h}{\fg-\eta}\right|\\
		&=&\frac{1}{2}\log\left(1-\frac{h}{\fg-\eta}\right)+
			\frac{1}{2}\log\left(1-\frac{\bar h}{\bar {\fg}-\bar \eta}\right)\\
		&=&\frac{1}{2}\left(-2\rea\left(\frac{h}{\fg-\eta}\right)
				+O\big(||h||_{1,\alp}^2\big)\right)\qquad \text{as}
					\ ||h||_{1,\alp}\rightarrow 0,
	\eeas
where we are using the Taylor series expansion of $\log(1-z)$ and the submultiplicative property of $||\cdot||_{1,\alp}$ in the last step. Thus, $L$ is differentiable at $\eta$ and $DL(\eta)(h)=-\rea\left(\frac{h}{\fg-\eta}\right)$. Continuing in this way, we obtain that $D^{j}\!L:U\rightarrow \sL^j(\ckm{1}{\alp}{}(\bdisc),\ckm{1}{\alp}(\bdisc;\rl))$ exists and is given by $D^{j}\!L(\eta)(h_1,...,h_j)=-(j-1)!\rea\left(\frac{h_1\cdots h_j}{(\fg-\eta)^j}\right)$. Thus, for any $j\geq 1$, $D^{j}\!L$ is continuous on $U$ when $\sL^j(\ckm{1}{\alp}{}(\bdisc),\ckm{1}{\alp}(\bdisc;\rl))$ is given the standard norm topology. Finally, observe that 

\begin{equation}
    \norm{D^jL(\eta)} = \sup_{\norm{(h_1,\ldots, h_j)} \le 1}\norm{D^jL(\eta)(h_1, \ldots, h_j)} = (j-1)!\norm{\rea\left(\frac{h_1\cdots h_j}{(\fg-\eta)^j}\right)}  \le \frac{j!}{\norm{\fg - \eta}^j}
\end{equation}

Hence, $L$ is analytic. Now, the maps $\cJ$ and $u\mapsto e^{-u}$ are both analytic on $\ckm{1}{\alp}(\bdisc;\rl)$, since the former is a bounded linear transformation, and the latter has continuous derivatives of all orders of the following form $(h_1,...,h_j)\mapsto e^{-u}h_1\cdots h_j$ at any $u\in\ckm{1}{\alp}(\bdisc;\rl)$. Thus, $A$ being the composition of analytic maps, is itself analytic. Further, as $L(0)=\log|\fg|=0$, $A(0)=1$. 

Now, recall that $Q(\eta)=\eta\cdot A(\eta)$. So, $Q(0)=0$. Being the product of two analytic, $Q$ is analytic at any $\eta\in U$. Now, since $DQ(\eta)(h)= \eta DA(\eta)(h)+hA(\eta)$, we have that $DQ(0)(h)\equiv h$. This gives \eqref{eq_claimQ} and concludes our proof. 
\end{proof}

We now apply Lemma~\ref{lem_disc1d} to solve a nonlinear Riemann-Hilbert problem in $n$ functions. Note that the same problem will be solved using a different technique in Section~\ref{subsec_cksmooth}, where we will improve the regularity of the manifold constructed here. 

\begin{lemma}\label{lem_discs} Let $\alp\in(0,1)$. There is an open neighborhood $\wt\Om$ of $D^{n-1}(1)\times \{0\}$ in $D^{n-1}(1)\times\cont^3(K;\Cn)$ and a $\cont^1$-smooth map $F:\wt\Om\rightarrow \ockm{1}{\alp}{}(\bdisc;\Cn)$ such that $F(\bt,0)=\fg_\bt$, and if $F(\bt,\phi)=f=(f_1,...,f_n)$ for $(\bt,\phi)\in\wt\Om$, then $f(\bdisc)\subset \sn_\phi$, $f_1(0)=0$ and $f_1'(0)>0$.
\end{lemma}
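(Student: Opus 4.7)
The plan is to recast the conditions on $f$ as a fixed-point equation on a suitable Banach space and invoke the implicit function theorem for Banach spaces around the unperturbed solution $(\bt,0,\fg_\bt)$. First, I would observe that $z\in\sn_\phi$ is equivalent to the single quadratic relation $|z_1-\phi_1(z)|^2 + \sum_{j=2}^n |z_j-\phi_j(z)|^2 = 1$ together with the $n-1$ linear relations $\ima(z_j-\phi_j(z))=0$ for $j=2,\dots,n$. On $\bdisc$, the condition $\ima f_j = \ima(\phi_j\circ f)$ together with the normalization $\rea f_j(0)=t_j$ uniquely determines a holomorphic boundary function, namely $f_j = t_j + i\cJ(\ima(\phi_j\circ f))$, where $\cJ$ is the operator from Section~\ref{subsec_prelim}. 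The remaining quadratic condition rearranges to $|f_1-\phi_1\circ f|=\sqrt{1-\sum_{j\geq 2}|f_j-\phi_j\circ f|^2}$ on $\bdisc$, which is exactly the nonlinear Riemann--Hilbert problem solved by Lemma~\ref{lem_disc1d}.

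Accordingly, the plan is to define an operator $G$ on an open neighborhood of $\{(\bt,0,\fg_\bt):\bt\in D^{n-1}(1)\}$ inside the Banach space $D^{n-1}(1)\times\cont^3(K;\Cn)\times\ockm{1}{\alp}(\bdisc;\Cn)$ by
\begin{equation*}
G_j(\bt,\phi,f) = t_j + i\cJ\bigl(\ima(\phi_j\circ f)|_{\bdisc}\bigr),\quad j=2,\dots,n,
\end{equation*}
\begin{equation*}
G_1(\bt,\phi,f)=E\bigl(\phi_1\circ f|_{\bdisc},\,\sig(\bt,\phi,f)\bigr),\quad \sig(\bt,\phi,f)=\sqrt{1-\textstyle\sum_{j\geq 2}|f_j-\phi_j\circ f|^2}\big|_{\bdisc},
\end{equation*}
and then look for $F(\bt,\phi)=f$ satisfying $T(\bt,\phi,f):=f-G(\bt,\phi,f)=0$. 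At $(\bt,0,\fg_\bt)$, the function $\sig$ reduces to the positive constant $\sqrt{1-||\bt||^2}$, so property $(ii)$ of Lemma~\ref{lem_disc1d} yields $G_1=\sqrt{1-||\bt||^2}\,\xi$, while $G_j=t_j$ for $j\geq 2$; hence $T$ vanishes at $(\bt,0,\fg_\bt)$ for every $\bt\in D^{n-1}(1)$. By construction, any fixed point of $G$ gives a holomorphic disc with $f(\bdisc)\subset\sn_\phi$, and property $(i)$ of Lemma~\ref{lem_disc1d} forces $f_1(0)=0$ and $f_1'(0)>0$, so all the claimed properties will follow automatically once $F$ is produced.

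The heart of the argument is then to verify that $D_f T(\bt,0,\fg_\bt)$ is a linear homeomorphism. Since $\phi\equiv 0$ at the unperturbed point, every derivative of $\phi_j\circ f$ in $f$ vanishes there, so $D_f G_j(\bt,0,\fg_\bt)\equiv 0$ for $j\geq 2$; and $D_{f_1}G_1(\bt,0,\fg_\bt)=0$ as well, since $G_1$ depends on $f_1$ only through $\phi_1\circ f$. Hence $D_fG(\bt,0,\fg_\bt)$ is strictly upper-triangular in the block decomposition $f=(f_1,f')$, and $D_fT(\bt,0,\fg_\bt)$ collapses to the identity on $\ockm{1}{\alp}(\bdisc;\Cn)$. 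The main obstacle is then to confirm the $\cont^1$-smoothness of $G$: the operator $\cJ$ is bounded linear and hence analytic, the map $E$ is analytic by Lemma~\ref{lem_disc1d}, and what remains is to track the regularity of the composition map $(\phi,f)\mapsto\phi\circ f:\cont^3(K;\Cn)\times\ockm{1}{\alp}(\bdisc;\Cn)\to\ckm{1}{\alp}(\bdisc;\Cn)$. A careful accounting of derivatives --- each $f$-derivative of the composition consumes one order of regularity of $\phi$ --- shows the hypothesis $\phi\in\cont^3$ is comfortably enough for this, and the implicit function theorem for Banach spaces then produces the desired $\cont^1$-smooth $F$ on a neighborhood $\wt\Om$.
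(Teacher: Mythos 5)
Your proposal is correct and follows essentially the same route as the paper: encode the attaching equations as the zero set of a Banach-space operator whose first component uses the scalar Riemann--Hilbert solver $E$ of Lemma~\ref{lem_disc1d} and whose remaining components use the Hilbert transform, check that it vanishes at $(\bt,0,\fg_\bt)$, and apply the implicit function theorem, with the $\cont^3$-regularity of $\phi$ absorbing the loss incurred by the composition map $(\phi,f)\mapsto\phi\circ f$. One small imprecision: because your $\sig$ depends on $f_2,\dots,f_n$ directly (rather than on the expression $t_j+H(\ima\phi_j(f))-\rea\phi_j(f)$ used in the paper, which is constant in $f$ when $\phi=0$), the derivative $D_fG(\bt,0,\fg_\bt)$ is not zero but only nilpotent --- its sole nonvanishing block is $D_{f'}G_1$ coming from the dependence of $\sig$ on $f'$ --- so $D_fT(\bt,0,\fg_\bt)=\id-N$ with $N^2=0$ rather than literally the identity; this is still a linear homeomorphism, so the implicit function theorem applies and your argument goes through.
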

\begin{proof} Recall that from Lemma~\ref{lem_disc1d}, there exists an open set $\Om\subset\ckm{1}{\alp}(\bdisc)\oplus\ckm{1}{\alp}(\bdisc;\rl)$ so that the solution operator $E$ is smoothly defined on $\Om$. 
Now, for $(\bt,\phi,f)\in D^{n-1}(1)\times\cont^3(K;\Cn)\times\ockm{1}{\alp}{}(\bdisc;\Cn)$, consider the map
	\bes
		P:(\bt,\phi,f)\mapsto\left(\phi_1(f),\sqrt{1-\Sig(\bt,\phi,f)}\right),
	\ees
where $\Sig(\bt,\phi,f)=\sum_{j=2}^n\left(t_j+H(\ima\phi_j(f))-\rea\phi_j(f)\right)^2$.
Then, $P$ is a $\cont^1$-smooth map from $W$ into $\ckm{1}{\alp}{}(\bdisc)\oplus\ckm{1}{\alp}(\bdisc;\rl)$, where $W=\{(\bt,\phi,f):f(\bdisc)\subset K\ \text{and}\ |\Sig(\bt,\phi,f)(\xi)|<1\ \text{for all}\ \xi\in \bdisc\}$. This is a consequence of the following observations. 		
\begin{enumerate}
    \item $P$ is clearly $\cont^\infty$-smooth in the $\bt$ variable.
    \item Since $H$ and $f\mapsto f^2$ are $\cont^\infty$-smooth from $\ckm{1}{\alp}{}(\bdisc)$ to $\ckm{1}{\alp}{}(\bdisc)$, and $f\mapsto \sqrt{f}$ is $\cont^\infty$-smooth from $\ckm{1}{\alp}(\bdisc;\rl_{>0})$ to $\ckm{1}{\alp}(\bdisc;\rl)$, our claim reduces to $(3)$ below. 
    \item If $\om=\{(\varphi,f)\subset\cont^{3}(B)\times\ckm{1}{\alp}{}(\bdisc):f(\bdisc)\subset \text{dom}(\varphi)\}$, where $B\subset \C$ is some closed ball, then the map $(\varphi,f)\mapsto \varphi(f)$ is $\cont^1$-smooth from $(\om,||\cdot||_{3}\oplus||\cdot||_{1,\alp})$ to $(\ckm{1}{\alp}{}(\bdisc),||\cdot||_{1,\alp})$. 
\end{enumerate}

Next, we note that when $\bt\in D^{n-1}(1)$, $(\bt,0,\fg_\bt)\in W$ and $P(\bt,0,\fg_\bt)=(0,\sqrt{1-||\bt||^2})\in\Om$.  So, there exists an open set $\cW\subset \rl^{n-1}\oplus \cont^3(K;\Cn)\oplus\ockm{1}{\alp}(\bdisc;\Cn)$ such that
	\begin{enumerate}
		\item [$(i)$] $(\bt,0,\fg_\bt)\in \cW$ for all $\bt\in D^{n-1}(1)$, 
		\item [$(ii)$] $\cW\subseteq W$,
		\item [$(iii)$] $P(\cW)\subseteq \Om$.
\end{enumerate} 

Now, consider the map $R:\cW\mapsto \ockm{1}{\alp}{}(\bdisc;\Cn)$ given by
	\be\label{eq_finalR}
		R(\bt,\phi,f)
		=f-\left(E\circ P(\bt,\phi,f),
			\bt+H(\ima \boldsymbol \phi(f))+i\ima \boldsymbol\phi (f)\right),
	\ee
where $\boldsymbol\phi$ denotes the tuple $(\phi_2,...,\phi_n)$, and $H$ acts component-wise. The map $R$ is $\cont^1$-smooth. Note that $R(\bt,0,\fg_\bt)=0$ and $D_3R(\bt,0,\fg_\bt)=\id$ on $\ockm{1}{\alp}{}(\bdisc;\Cn)$ for all $\bt\in D^{n-1}(1)$. So, by the implicit function theorem for Banach spaces, for each $\bt\in D^{n-1}(t)$, there exist neighborhoods $U_\bt$ of $\bt$ in $D^{n-1}(1)$, $V_\bt$ of $0$ in $\cont^3(K;\Cn)$ and $W_\bt$ of $\fg_\bt$ in $\ockm{1}{\alp}(\bdisc;\Cn)$, and a $\cont^1$-smooth map $F_\bt:U_\bt\times V_\bt\rightarrow W_\bt$ such that $F_\bt(\bt,0)=\fg_{\bt}$ and 
	\be\label{eq_uniq1}
	R(\bs,\phi,f)=0\ \text{for}\ (\bs,\phi,f)\in U_\bt\times V_\bt\times W_\bt\
		 \text{if and only if}\ 
		f=F_\bt(\bs,\phi).
	\ee
But, by uniqueness $F_{\bt_1}=F_{\bt_2}$ whenever the domains overlap. Thus, there exists an open set $\wt\Om\subset D^{n-1}(1)\times\cont^3(K;\Cn)$ such that $D^{n-1}(1)\times\{0\}\subset \wt\Om$, and a $\cont^1$-smooth map $F:\wt\Om\rightarrow\ockm{1}{\alp}{}(\bdisc;\Cn)$ such that $F(\bt,0)=\fg_\bt$ and $R(\bt,\phi,F(\bt,\phi))=0$ for all $(\bt,\phi)\in\wt\Om$.
The latter condition means that if $F(\bt,\phi)=f$, then 
    \bea
    |f_1-\phi_1(f)|^2+\sum_{j=2}^n(\rea f_j-\rea \phi_j(f))^2=1,\notag\\
    \ima(f_j)=\ima\phi_j(f),\quad j=2,...,n.\label{eq_attach}
    \eea
    In other words, $f(\bdisc)\subset\sn_\phi$. Further, from $(i)$ in Lemma~\ref{lem_disc1d}, $f_1(0)=0$ and $f_1'(0)>0$.
        \end{proof}

We are now ready to construct the manifold $M_{\text{TR}}$. 
\begin{theorem}\label{thm_diskconst} Given $t\in(0,1)$, there is a neighborhood $N_{t}$ of $0$ in $\cont^3(K;\Cn)$ such that $\overline{D^{n-1}(t)}\times N_{t}\subset \wt\Om$, and for $\phi\in N_{t}$, the map $\cF_\phi:\cdisc\times\opar{t}\rightarrow\Cn$ defined by 
	\bes
		\cF_\phi(\xi,\bt)=F(\bt,\phi)(\xi)
	\ees
is a $\cont^1$-smooth embedding into $\Cn$, with the the image set $M_{\text{TR}}=\cF_\phi(\cdisc\times\opar{t})$ a disjoint union of analytic discs with boundaries in $\sn_\phi$. Further, the map $\phi\mapsto \cF_\phi$ is a continuous map from $N_t$ into $\cont^1(\cdisc\times\opar{t};\Cn)$. 
\end{theorem}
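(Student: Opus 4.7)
The plan is to deduce the theorem by combining the implicit function output $F$ from Lemma~\ref{lem_discs} with the evaluation analysis of Lemma~\ref{lem_ev}, the substantive work having been absorbed into those two lemmas. First, fix an auxiliary radius $t_1\in(t,1)$. Since $\wt\Om$ is open and contains $D^{n-1}(1)\times\{0\}$, for each $\bt\in\cpar{t_1}$ there is a neighborhood $U_\bt\times V_\bt\subset\wt\Om$ with $V_\bt$ an open $\cont^3$-ball around $0$. Compactness of $\cpar{t_1}$ produces a finite subcover $\{U_{\bt_i}\}$, and intersecting the corresponding $V_{\bt_i}$'s yields a neighborhood $N_t$ of $0$ in $\cont^3(K;\Cn)$ with $\cpar{t_1}\times N_t\subset\wt\Om$. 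By Lemma~\ref{lem_ev}, $\ev$ is $\cont^1$-smooth on $\cdisc\times\ockm{1}{\alp}(\bdisc;\Cn)$, and by Lemma~\ref{lem_discs}, $F$ is $\cont^1$-smooth on $\wt\Om$. The chain rule then yields the joint $\cont^1$-smoothness of
\[
(\xi,\bt,\phi)\mapsto\cF_\phi(\xi,\bt)=\ev(\xi,F(\bt,\phi))
\]
on $\cdisc\times\cpar{t_1}\times N_t$.

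Next, to obtain the embedding claim, observe that the unperturbed map $\cF_0(\xi,\bt)=\fg_\bt(\xi)=(\sqrt{1-\|\bt\|^2}\,\xi,\,\bt)$ is a $\cont^\infty$-smooth embedding of $\cdisc\times\cpar{t_1}$ into $\Cn$: the last $n-1$ complex coordinates of the image recover $\bt$, forcing injectivity, and the Jacobian has full rank $n+1$ throughout because $\sqrt{1-\|\bt\|^2}>0$ on $\cpar{t_1}$. Being a $\cont^1$-embedding of a compact manifold with corners is an open condition in the $\cont^1(\cdisc\times\cpar{t_1};\Cn)$-topology. The joint smoothness established above shows that $\phi\mapsto\cF_\phi$ is continuous into this space, so shrinking $N_t$ further guarantees that $\cF_\phi|_{\cdisc\times\cpar{t_1}}$ remains a $\cont^1$-embedding for every $\phi\in N_t$; restricting to the open subset $\cdisc\times\opar{t}$ gives the required embedding.

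Finally, the injectivity of $\cF_\phi$ implies that the analytic discs $\cF_\phi(\cdisc\times\{\bt\})=F(\bt,\phi)(\cdisc)$ are pairwise disjoint as $\bt$ varies in $\opar{t}$; that each slice is a holomorphically parametrized disc is immediate from $F(\bt,\phi)\in\ockm{1}{\alp}(\bdisc;\Cn)$, and Lemma~\ref{lem_discs} already places its boundary in $\sn_\phi$. Continuity of $\phi\mapsto\cF_\phi$ from $N_t$ into $\cont^1(\cdisc\times\cpar{t};\Cn)$ follows from the same chain rule computation. I expect no genuine obstacle here, as all the hard analysis has been done earlier; the only mild subtlety is the non-compactness of $\opar{t}$, resolved by working on the compact enlargement $\cpar{t_1}$ where openness of the embedding property is available, and then restricting.
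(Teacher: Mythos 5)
Your argument is correct and follows essentially the same route as the paper: compactness of the closed parameter ball to fix $N_t$, $\cont^1$-smoothness of $\cF_\phi$ via composition with the evaluation map of Lemma~\ref{lem_ev}, and stability of the embedding property under $\cont^1$-small perturbations of $\cF_0$, with continuity of $\phi\mapsto\cF_\phi$ supplying the perturbation control. Your use of the compact enlargement $\cpar{t_1}$ is a slightly more careful packaging of the same openness argument the paper runs via its quantitative estimate $||\cF_{\phi^1}-\cF_{\phi^2}||_1\leq C||\phi^1-\phi^2||_3$.
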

\begin{proof} In Lemma~\ref{lem_discs}, the open set $\wt\Om\subset D^{n-1}(1)\times\cont^3(K;\Cn)$ contains $D^{n-1}(1)\times\{0\}$. Thus, by compactness, for any $t\in(0,1)$, there is an open neighborhood $N_t$ of $0$ in $\cont^3(K;\Cn)$ such that $\overline {D^{n-1}(t)}\times N_t\subset \wt\Om$. 

Now, for a fixed $\phi\in N_t$, note that $\cF_\phi$ is the composition of two $\cont^1$-smooth maps: 
	\beas
		(\xi,\bt)\mapsto (\xi,F(\bt,\phi));\ 
		(\xi, f)\mapsto \wt f(\xi).
	\eeas
The smoothness of the second map was established in Lemma~\ref{lem_ev}. Thus, $\cF_\phi$ is a $\cont^1$-smooth map. Since, for $\phi\in N_t$,  $\phi\mapsto F(\bt,\phi)$ is a $\cont^1$-smooth map, we have that $D \cF_\phi$ depends continuously on $\phi$. Quantitatively, this says that for some $C>0$, 
    \bes
        ||\cF_{\phi^1} -\cF_{\phi^2}||_1\leq C||\phi^1-\phi^2||_3
        \ees
for $\phi^1, \phi^2\in N_t$. Thus, shrinking $N_t$ if necessary, we have that $\cF_\phi$ is an embedding for all $\phi\in N_t$, since $\cF_0$ is an embedding.  
\end{proof}

\begin{remark}\label{rem_norm} Based on the above results, we call an $f=(f_1,...,f_n)\in\ockm{k}{\alp}(\bdisc;\Cn)$ a {\em normalized analytic disc attached to} $\sn_\phi$ if $f(\bdisc)\subset\sn_\phi$, $f_1(0)=0$ and $f_1'(0)>0$. Note that in the construction above, each $F(\bt,\phi)$ is a normalized analytic disc attached to $\sn_\phi$. 
\end{remark}

\subsection{Regularity of the foliation for real-analytic perturbations}\label{sec_realan} In this section, we will show that the manifold $M_{\text{TR}}$ constructed in Theorem~\ref{thm_diskconst} is, in fact, real-analytic if $\Psi$ is a real-analytic perturbation of $\sn$.

Let $\cW\subset \opar{1}\oplus\cont^3(K;\Cn)\oplus \ockm{1}{\alp}(\bdisc;\Cn)$, $R$ and $F$ be as in the previous section (see \eqref{eq_finalR}). Recall that $R$ is a $\cont^1$-smooth map and $D_3R(\bt,0,\fg_\bt)=\id$ on $\ockm{1}{\alp}(\bdisc;\Cn)$, for all $\bt\in\opar{1}$. Thus, given $t\in(0,1)$, there is an $\eps_t>0$ such that, if $||\phi||_{\cont^3}<\eps_t$, then
	\begin{itemize}
		\item [$\bullet$] $\phi\in N_t$ where $N_t\subset\cont^3(K;\Cn)$ is a neighborhood of $0$ obtained in Lemma~\ref{lem_discs};
		\item [$\bullet$] $D_3R(\bt,\phi,F(\bt,\phi))$ is an isomorphism on $\ockm{1}{\alp}(\bdisc;\Cn)$ for all $\bt\in\opar{t}$.
\end{itemize}

Now, fix a real-analytic $\phi\in \cont^3(K;\Cn)$ with $||\phi||_{\cont^3}<\eps_t$. Let $R_\phi:\cW_\phi\rightarrow \ockm{1}{\alp}{}(\bdisc;\Cn)$ be the map given by 
	\bes
		R_\phi(\bt,f)=R(\bt,\phi,f),
	\ees
where $\cW_\phi=\{(\bt,f)\in\rl^{n-1}\oplus\ockm{1}{\alp}(\bdisc;\Cn):(\bt,\phi,f)\in\cW\}$.
Note that $R_\phi(\bt,F(\bt,\phi))=0$ and $D_2R_\phi(\bt,F(\bt,\phi))\approx \id$, as long as $\bt\in\opar{t}$. Since $\phi$ is real analytic, $R_\phi$ is analytic on $\cW_\phi$. This follows from the analyticity of $E$ as shown in lemma \ref{lem_disc1d}, and the easily-checked fact that the map $f \rightarrow \phi(f)$ is analytic for $\phi$ analytic.

We apply the analytic implict function theorem for Banach spaces to conclude that for each $\bt\in D^{n-1}(t)$, there exist neighborhoods $U_\bt '\subset \opar{t}$ of $\bt$ and $W_\bt '\subset\ockm{1}{\alp}(\bdisc;\Cn)$ of $F(\bt,\phi)$, and an analytic map $F_{\phi,\bt}:U_\bt '\rightarrow W_\bt '$ such that $F_{\phi,\bt}(\bt)=F(\bt,\phi)$ and 
	\be\label{eq_uniq2}
		R_\phi(\bs,f)=0\ \text{for}\ (\bs,f)\in U_\bt '\times W_\bt '\
		 \text{if and only if}\ 
		f=F_{\phi,\bt}(\bs).
	\ee
As before, the $F_{\phi,\bt}$'s coincide when their domains overlap. Thus, there is an analytic map $F_\phi:\opar{t}\rightarrow\ockm{1}{\alp}(\bdisc;\Cn)$ such that $R_\phi(\bt,F_\phi(\bt))\equiv 0$ on $\opar{t}$. We set
\bes
	M'_{\text{TR}}
	=\left\{F_\phi(\bt)(\xi)	
	:(\xi,\bt)\in\cdisc\times D^{n-1}(t)\right\}.
\ees
The uniqueness in \eqref{eq_uniq1} and \eqref{eq_uniq2} shows that, in fact, $F_\phi(\cdot)=F(\cdot,\phi)$ and $M'_{\text{TR}}=M_{\text{TR}}$. Thus, we already know that $M'_{\text{TR}}$ is a $\cont^1$-smooth embedded manifold in $\Cn$. To show that $M'_{\text{TR}}$ is in fact a real-analytic manifold, it suffices to show that $\mathscr F:(\xi,\bt)\mapsto F_\phi(\bt)(\xi)$ is real-analytic on $\cdisc\times\opar{t}$.

Now, since $\mathscr F$ is the composition of $(\xi,\bt)\mapsto(\xi,F_\phi(\bt))$ and the map $\ev:(\xi,f)\mapsto \wt f(\xi)$, $\mathscr F\in\cont^\om(\Delta\times\opar{t})$; see Lemma~\ref{lem_ev}. To show that $\mathscr F\in\cont^\om(\cdisc\times\opar{t})$, we fix $\bt_0\in D^{n-1}(t)$.  Since $F_\phi$ is real-analytic, there is an $\eps>0$ such that for $\bt\in \bt_0 + \opar{\eps}$, $F_\phi(\bt)(\xi)=\sum_{\bet\in\mathbb{N}^{n-1}} h_\bet(\xi)(\bt - \bt_0)^\bet$ with $h_\bet\in\ockm{1}{\alp}(\bdisc;\C)$ and $||h_\bet||_{1,\alp}\lesssim r^{|\bet|}$ for some $r>0$. Without loss of generality, let $\bt=0$. Now, let $\xi_0\in\bdisc$ and $z_0=\mathscr{F}(\xi_0,0)$. Since $T=\{(z_1,...z_n)\in\Cn:z_1\in\bdisc, z_2,...,z_n\in\rl\}$ is a real-analytic totally real manifold in $\Cn$ there exists a biholomorphism $P$ near $\xi_0$ that maps an open piece of $T$ biholomorphically into $\Rn$ in $\Cn$, mapping $\xi_0$ to the origin. Similarly, there exists a biholomorphism $Q$ near $z_0$ that maps an open piece of $\sn_\phi$ biholomorphically into $\Rn$ in $\Cn$, mapping $z_0$ to the origin. Now, we let $Q^*(z_1,z')=Q(\sum_{\bet} h_\bet(z_1)	 (z')^\bet)$, where $z'=(z_2,...,z_n)$. From the analyticity of $F_\phi$, we have that $Q^*\in\hol(W)\cap\cont(W')$, where 
	\beas
	W&=&\{z_1\in\Delta:|z_1-\xi_0|<\eps\}\times \{z'\in\C^{n-1}:||z'||<\eps\},\\
	W'
	&=&\{z_1\in\cdisc:|z_1-\xi_0|\leq \eps\}\times \{z'\in\C^{n-1}:||z'||<\eps\}.
	\eeas
For $(z_1,...,z_n)$ close to $0$, we define 
	\bes
		P^*(z_1,z')=\begin{cases}
			Q^*\circ P^{-1}(z_1,z'),\ \ima z_1>0,\\
			\overline{Q^*\circ P^{-1}\overline{(z_1,z')}},\ \ima z_1<0.\\							
							\end{cases}
	\ees
Then, by the edge of the wedge theorem, $P^*$ extends holomorphically to a neighborhood of $(0,0)$ in $\Cn$, and thus, $\mathscr F$ extends analytically to a neighborhood of $\xi_0$ in $\cdisc\times\opar{t}$. Repeating this argument for every $\bt\in\opar{t}$, we obtain the real-analyticity of $M_{\text{TR}}$.  

\subsection{Regularity of the foliation for \texorpdfstring{$\boldsymbol{\cont^{2k+1}}$}{b}\ -smooth perturbations} 
\label{subsec_cksmooth}

In this section, we improve the regularity of the manifold $M_{\text{TR}}$ constructed in Theorem~\ref{thm_diskconst} under the assumption that the map $\Phi$ is $\cont^{2k+1}$-smooth, where $k\in\N$. Recall that for a fixed $t\in(0,1)$, Theorem~\ref{thm_diskconst} yields a neighborhood $N_t$ of $0$ in $\cont^1(K;\Cn)$ such that, for $\phi\in N_t$, $M_{\text{TR}}=\cF_\phi(\cdisc\times\opar{t})$ is a $\cont^1$-smooth submanifold in $\Cn$. Shrinking $N_t$ further, if necessary, we prove

\begin{theorem}\label{thm_cksmooth} For any $k\in\N$, $\phi\in N_t\cap \cont^{2k+1}(K;\Cn)$ and $\bt\in\opar{t}$, there exist neighborhoods $\cW_1$, $\cW_2\subset\opar{t}$ of $\bt$, and a $\cont^k$-smooth embedding $\cG_k:\cdisc\times \cW_1\rightarrow\Cn$ such that $\cG_k(\cdisc\times \cW_1)=\cF(\cdisc\times \cW_2)$. Thus, $M_{\text{TR}}$ is $\cont^k$-smooth. In particular, if $\phi\in N_t\cap \cont^\infty(K;\Cn)$, then $M_{\text{TR}}$ is a $\cont^\infty$-smooth manifold. 
\end{theorem}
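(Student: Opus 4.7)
The plan is to reparametrize the foliation near an arbitrary leaf using a higher-regularity version of the implicit function theorem scheme of Lemma~\ref{lem_discs}. Fix $\phi\in N_t\cap\cont^{2k+1}(K;\Cn)$, $\bt_0\in\opar{t}$, and set $f_0=F(\bt_0,\phi)\in\ockm{1}{\alp}(\bdisc;\Cn)$. I would first establish a bootstrapping statement: any leaf $F(\bt,\phi)$ is in fact already in $\ockm{k}{\alp}(\bdisc;\Cn)$, reflecting the fact that for a $\cont^{2k+1}$ defining manifold, solutions to the associated nonlinear Riemann--Hilbert problem with totally real boundary conditions gain derivatives (by standard boundary elliptic regularity applied to the linearized problem, or by direct examination of the Cauchy--Green--Hilbert representation used to build $E$ in Lemma~\ref{lem_disc1d}). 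In particular, $f_0\in\ockm{k}{\alp}(\bdisc;\Cn)$.

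\textbf{Higher regularity Banach space setup.} Replace $\ockm{1}{\alp}(\bdisc;\Cn)$ by $\ockm{k}{\alp}(\bdisc;\Cn)$ throughout the construction of the operator $R$ in \eqref{eq_finalR}. Letting
\[
   R^{(k)}(\bs,f)=f-\bigl(E\circ P(\bs,\phi,f),\ \bs+H(\ima\bphi(f))+i\,\ima\bphi(f)\bigr),
\]
the key technical task is to verify that $R^{(k)}$ is a $\cont^1$-smooth map in a neighborhood of $(\bt_0,f_0)$ when viewed from an open set in $\opar{t}\times\ockm{k}{\alp}(\bdisc;\Cn)$ into $\ockm{k}{\alp}(\bdisc;\Cn)$. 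This reduces, as in the proof of Lemma~\ref{lem_discs}, to checking that the substitution map $(\varphi,f)\mapsto\varphi\circ f$ is $\cont^1$-smooth from a suitable open subset of $\cont^{2k+1}(K;\Cn)\times\ockm{k}{\alp}(\bdisc;\Cn)$ into $\ockm{k}{\alp}(\bdisc;\Cn)$, together with the analogous statement for $E\circ P$. The differentiability count works out because differentiating the substitution operator once in $f$ already requires one derivative of $\varphi$; each additional derivative in the range space $\ckm{k}{\alp}$ costs one further derivative of $\varphi$, so $\cont^{2k+1}$-smoothness of $\phi$ provides the margin needed after accounting for the Hilbert transform $H$ (which is bounded on $\ckm{k}{\alp}$) and for the $\cont^1$-smoothness in the Banach-space variables.

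\textbf{Inversion and conclusion.} The linearization at $(\bt_0,f_0)$ equals $\id$ on $\ockm{k}{\alp}(\bdisc;\Cn)$ by exactly the calculation in the proof of Lemma~\ref{lem_discs}; this is precisely where the Forstneri\v c--Globevnik multi-index picture simplifies, because our normalization (built into $E$) has already quotiented out the automorphism parameters so the partial indices are $0$ and the linearized operator is an isomorphism. By the Banach-space implicit function theorem (with regularity matching that of $R^{(k)}$), there exist neighborhoods $\cW_1\subset\opar{t}$ of $\bt_0$ and $V\subset\ockm{k}{\alp}(\bdisc;\Cn)$ of $f_0$, and a $\cont^k$-smooth map $F^{(k)}:\cW_1\to V$ with $R^{(k)}(\bs,F^{(k)}(\bs))=0$ for all $\bs\in\cW_1$. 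Because $\ockm{k}{\alp}\hookrightarrow\ockm{1}{\alp}$ continuously, the uniqueness part of the original implicit function theorem in \eqref{eq_uniq1} forces $F^{(k)}(\bs)=F(\bs,\phi)$ on some smaller neighborhood $\cW_2\subset\cW_1$. Setting $\cG_k(\xi,\bs)=\ev(\xi,F^{(k)}(\bs))$ and invoking Lemma~\ref{lem_ev} (with $k$ in place of $1$) yields a $\cont^k$-smooth map $\cG_k:\cdisc\times\cW_1\to\Cn$. Shrinking $\cW_1$ if necessary ensures $\cG_k$ is still an embedding (since $\cG_k$ agrees with the already-embedded $\cF_\phi$ as a map of sets on $\cW_2$, and its Jacobian is close to that of $\cF_\phi$). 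Since the argument is local in $\bt$ and $\bt\in\opar{t}$ was arbitrary, $M_{\text{TR}}$ admits a $\cont^k$-smooth atlas. The $\cont^\infty$ case follows by letting $k\to\infty$ and noting that on the overlap of two such charts the change of coordinates is $\cont^k$-smooth for every $k$.

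\textbf{Main obstacle.} The principal difficulty is not the implicit function theorem step itself but the precise bookkeeping of derivative losses in the composite substitution--Hilbert transform--substitution structure of $R^{(k)}$, which justifies the specific threshold $\cont^{2k+1}$ for $\phi$. Getting this count right---and verifying that on $\ockm{k}{\alp}$ one retains analyticity of $E$ and the factor-of-two derivative loss appears only once---is where the Forstneri\v c--Globevnik calculus is used; everything else follows the template of Section~\ref{subsec_exist}.
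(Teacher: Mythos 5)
There is a genuine gap at the inversion step, and it is precisely the obstruction that forces the paper to use the Forstneri{\v c}--Globevnik partial-index machinery rather than re-running Alexander's implicit function theorem in higher norms. You assert that the linearization $D_3R^{(k)}(\bt_0,f_0)$ ``equals $\id$ on $\ockm{k}{\alp}(\bdisc;\Cn)$ by exactly the calculation in the proof of Lemma~\ref{lem_discs}.'' That calculation is performed at $\phi=0$ and $f=\fg_{\bt}$; at a general $(\bt_0,\phi,f_0)$ with $\phi\neq 0$ the derivative is $\id-T$ where $T$ involves composition with $D\phi$ along $f_0$ and Hilbert transforms, and its operator norm on $\ockm{k}{\alp}$ is controlled only by $\cont^{k+2}$-type norms of $\phi$. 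Membership of $\phi$ in $N_t$ gives smallness only in $\cont^3$, so invertibility of $D_3R^{(k)}$ on $\ockm{k}{\alp}$ does not follow; making it small would require shrinking the neighborhood with $k$ (the ``$\eps_k\to 0$'' problem flagged in the introduction), which destroys the fixed-$N_t$ statement of the theorem and the $\cont^\infty$ case. Your supporting claim that ``the partial indices are $0$'' because the normalization quotients out the automorphisms is also incorrect: the partial indices are an invariant of $\sn_\phi$ along $f_{\bt}$, independent of any normalization, and Lemma~\ref{lem_stabind} shows they are $2,0,\dots,0$; it is the total index $2$ plus the dimension $n$ that yields an $(n+3)$-real-parameter family of nearby attached analytic discs, cut down to $n-1$ by the three normalization conditions.

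What the paper does instead --- and what your proposal is missing --- is to replace the quantitative ``close to the identity'' argument by a structural one: the stability of the partial indices $2,0,\dots,0$ under $\cont^3$-small perturbations (proved via the explicit holomorphic factorization $\Theta\Lam$ in Lemma~\ref{lem_stabind}) gives, through the normal form for the tangent bundle along $f_{\bt}$, an exact description of the solution space of the linearized Riemann--Hilbert problem in \emph{every} H\"older class $\ckm{k}{\alp}$, regardless of the size of the higher-order norms of $\phi$. This is what makes Lemmas~\ref{lem_Glob6.1} and~\ref{lem_Glob7.1} work with the fixed $N_t$. Your bootstrapping claim ($f_{\bt}\in\ockm{k}{\alp}$, in fact $\ockm{2k}{\alp}$ by \cite{Ch82}) and your final identification of the two families via uniqueness in \eqref{eq_uniq1} are fine as far as they go, but without a correct argument for the invertibility of the linearized operator in $\ockm{k}{\alp}$ the central step of the proof is unsupported.
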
 

To establish Theorem~\ref{thm_cksmooth}, we first observe that when $\phi\in\cont^{2k+1}(K;\Cn)\cap N_t$, then $f_\bt:\xi\mapsto F(\bt,\phi)(\xi)$ is in $\ockm{2k}{\alp}(\bdisc;\Cn)$ (for every $0<\alp<1$) for every $\bt\in \opar{t}$. This follows from known regularity results for analytic discs attached to totally real manifolds in $\Cn$ (see \cite{Ch82}). Then, we will use the theory of partial indices --- introduced by Forstneri{\v c} for totally real manifolds in $\CC$, and generalized by Globevnik to higher dimensions (see \cite{Fo87} and \cite{Gl94}) --- to produce a $\cont^k$-smooth $(n-1)$-dimensional family of analytic discs attached to $\sn_\phi$ and show that these coincide with the ones that foliate $M_{\text{TR}}$ near $f_\bt$. Although, we provide all the necessary definitions below, we direct the reader to Sections~2-5 in \cite{Gl94} for more background on Hilbert boundary problems and partial indices. 

\begin{ntn}
In this section, we will sometimes express an $n\times n$ matrix over $\C$ as
	\bes
	\begin{pmatrix}
		a & \boldsymbol v\\
		\boldsymbol w^{\tr} & A
	\end{pmatrix},
	\ees
where $a\in\C$, $\boldsymbol v$, $\boldsymbol w\in \C^{n-1}$, and $A$ is an $(n-1)\times(n-1)$ matrix over $\C$.
\end{ntn}
Let $M$ be an $n$-dimensional totally real manifold in $\Cn$. Suppose $f:\cdisc \rightarrow\Cn$ is an analytic disc with boundary in $M$, i.e., $f\in\cont(\cdisc)\cap \hol(\Delta)$, and $f(\bdisc)\subset M$. Further, suppose $A:\bdisc\rightarrow \Gl(n;\C)$ is such that the real span of the columns of $A(\xi)$ is the tangent space $T_{f(\xi)}M$ to $M$ at $f(\xi)$, for each $\xi\in\bdisc$. Then, owing to the solvability of the Hilbert boundary problem for vector functions of class $\cont^\alp$ (see \cite[Sect. 3]{Gl94}, also see \cite{Ve67}), it is known that if $A$ is of class $\cont^\alp$ ($0<\alpha<1$), then there exist maps $F^+\!:\cdisc\rightarrow \Gl(n;\C)$
 and $F^-\!:\hat\C\setminus\Delta\rightarrow\Gl(n;\C)$,
 and integers $\kap_1\geq \cdots\geq  \kap_n$, such that
	\begin{itemize}
		\item $F^+\!\in\cont^\alp(\cdisc)\cap\hol(\Delta)$ and
				 $F^-\!\in\cont^\alp(\hat\C\setminus\Delta)
					\cap \hol(\hat\C\setminus\cdisc)$;
		\item for all $\xi\in\bdisc$,
			\be\label{eq_fact}
				A(\xi)\overline{A(\xi)^{-1}}=F^+(\xi)
				\begin{pmatrix} 
				\xi^{\kap_1} & 0  &\cdots & 0 \\
				0 & \xi^{\kap_2}  &\cdots & 0\\
				\vdots & \vdots & \ddots & \vdots\\
				0 & 0  & \cdots &\xi^{\kap_n} 
				\end{pmatrix}
				F^-(\xi),\quad  		
					\xi\in\bdisc.
			\ee
	\end{itemize}
Moreover, the integers $\kap_1\geq ...\geq \kap_n$ are the same for all factorizations of the type \eqref{eq_fact}. These integers are called the {\em partial indices of $M$ along $f$} and their sum is called the {\em total index of $M$ along $f$}. Using the factorization above, a normal form for the bundle $\{T_{f(\xi)}M:\xi\in\bdisc\}$ is obtained in \cite{Gl94}. In particular, it is shown that if the partial indices of $M$ along $f$ are even, then there is a $\cont^\alp$-map $\Theta:\cdisc\rightarrow\Gl(n;\C)$, holomorphic on $\Delta$, and such that for every $\xi\in\bdisc$, the real span of the columns of the matrix $\Theta(\xi)\Lam(\xi)$ is $T_{f(\xi)}M$, where $\Lam(\xi)=\operatorname{Diag}[\xi^{\kap_1/2},...,\xi^{\kap_n/2}]$. Conversely, suppose,  
\bea\label{eq_ind}
&\text{there is a $\Theta:\cdisc\rightarrow\Gl(n;\C)$ of class $\cont^\alp$, holomorphic on $\Delta$, such that}&\\
&\text{$\ima(A^{-1}\Theta\Lam)\equiv 0$ on $\bdisc$ or, equivalently, the real span of the columns of $\Theta(\xi)\Lam(\xi)$ is $T_{f(\xi)}M$}.&\notag
\eea
Then, for $\xi\in\bdisc$,
	\bes
		A(\xi)\overline{A(\xi)^{-1}}=\Theta(\xi)
				\begin{pmatrix} 
				\xi^{\kap_1} & 0  &\cdots & 0 \\
				0 & \xi^{\kap_2}  &\cdots & 0\\
				\vdots & \vdots & \ddots & \vdots\\
				0 & 0  & \cdots &\xi^{\kap_n} 
				\end{pmatrix}
				\overline{\Theta^{-1}(1/\overline{\xi})},
	\ees 
which, due to the holomorphicity of $\Theta$ on $\Delta$, is a factorization of type \eqref{eq_fact}. Thus, we obtain 

\begin{remark}\label{rem_indices} Suppose $f$ and $A$ are as above. Then, $A$ satisfies \eqref{eq_ind} if and only if $\kap_1,...,\kap_n$ are the partial indices of $M$ along $f$. Furthermore, if $A$ is of class $\ckm{k}{\alp}$, then $\Theta$ in \eqref{eq_ind} can be chosen to be of class $\ckm{k}{\alp}$. 
\end{remark}

A quick application of Remark~\ref{rem_indices} shows that the partial indices of $\sn$ along each $\fg_\bt$, $\bt\in  D^{n-1}(1)$, are $2,0,...,0$. This is because, for each fixed $\bt\in D^{n-1}(1)$ and $\xi\in\bdisc$, the real span of the columns of 
	\bes
		  \begin{pmatrix}
	   	i\sqrt{1-||\bt||^2}
				& 
				-\dfrac{\bt\xi}{\sqrt{1-||\bt||^2}}\\   
			\boldsymbol 0^{\tr}
				& 
					\id_{n-1}
    	\end{pmatrix}
		  \begin{pmatrix}
	   	\xi & \boldsymbol 0\\   
			\boldsymbol 0^{\tr} & \id_{n-1}
    	\end{pmatrix}
	\ees
is precisely $T_{\fg_\bt(\xi)}\sn$, and the factor on the left clearly extends to a holomorphic map (in $\xi$) from $\Delta$ to $\Gl(n;\C)$. We now use Remark~\ref{rem_indices} to establish a stability result for partial indices of $\sn_\phi$ along the disks constructed in Lemma~\ref{lem_discs}. 

\begin{lemma}\label{lem_stabind}
Let $\wt\Om$ and $F$ be as in Lemma~\ref{lem_discs}. Then, given any $t\in(0,1)$, there exists a neighborhood $N_t\subset\cont^3(K;\Cn)$ such that $\cpar{t}\times N_t\subset\wt\Om$, and for any $(\bt,\phi)\in \opar{t}\times N_t$, the partial indices of $\sn_\phi$ along $f_\bt:\xi\mapsto F(\bt,\phi)(\xi)$, $\xi\in\bdisc$, are $2,0,...,0$. 
\end{lemma}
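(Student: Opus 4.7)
The plan is to apply Remark~\ref{rem_indices}: to prove the partial indices along $f_\bt$ are $2, 0, \ldots, 0$, it suffices to exhibit, for each $(\bt, \phi) \in \opar{t} \times N_t$, a matrix-valued function $\Theta_\phi(\bt, \cdot) \in \ockm{1}{\alp}(\bdisc; \Gl(n, \C))$ such that, with $\Lam(\xi) := \operatorname{Diag}[\xi, 1, \ldots, 1]$, the real span of the columns of $\Theta_\phi(\bt, \xi)\Lam(\xi)$ coincides with $T_{f_\bt(\xi)}\sn_\phi$ for all $\xi \in \bdisc$. The $\phi = 0$ instance is already in hand via the explicit polynomial $\Theta_0$ exhibited in the paragraph preceding the lemma.

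First I would assemble a $(\bt, \phi)$-continuous family of spanning frames $A_\phi(\bt, \cdot) \in \ckm{1}{\alp}(\bdisc; \Gl(n, \C))$ for $T_{f_\bt(\xi)}\sn_\phi$ extending $A_0 = \Theta_0\Lam$. Using $\sn_\phi = \Psi(\sn)$ and the pushforward identity $T_p\sn_\phi = D\Psi(\Phi(p))\cdot T_{\Phi(p)}\sn$, one may take
\begin{equation*}
A_\phi(\bt, \xi) := D\Psi(\Phi(f_\bt(\xi))) \cdot A_0(\bs_\xi, \eta_\xi),
\end{equation*}
where $(\bs_\xi, \eta_\xi)$ are the $\sn$-parameters of $\Phi(f_\bt(\xi))$. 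Continuity of $(\bt, \phi)\mapsto A_\phi(\bt, \cdot)$ in the $\ckm{1}{\alp}$-norm will follow from Theorem~\ref{thm_diskconst} together with the $\cont^2$-smoothness of $\operatorname{Inv}$ recorded in Section~\ref{subsec_notn}.

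The core step is an implicit function theorem argument applied to
\begin{equation*}
\mathcal T(\Theta, R, \bt, \phi) := \Theta\Lam - A_\phi(\bt, \cdot)\, R
\end{equation*}
on $\ockm{1}{\alp}(\bdisc; \operatorname{Mat}(n, \C)) \times \ckm{1}{\alp}(\bdisc; \operatorname{Mat}(n, \rl)) \times \opar{t} \times N_t$, which vanishes at the base point $(\Theta_0, \id, \bt, 0)$. If I can verify that the linearization $D_{(\Theta, R)}\mathcal T$ at the base point is a Banach-space isomorphism (after a suitable finite-dimensional normalization to kill its kernel), the IFT yields a continuous family $(\Theta_\phi(\bt, \cdot), R_\phi(\bt, \cdot))$ close to $(\Theta_0(\bt, \cdot), \id)$ with $\Theta_\phi\Lam = A_\phi R_\phi$ on $\bdisc$. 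Since $R_\phi$ takes values in $\Gl(n, \rl)$, the columns of $\Theta_\phi\Lam$ and $A_\phi$ real-span the same subspace, and Remark~\ref{rem_indices} concludes the proof. Compactness of $\cpar{t}$ upgrades the construction to a uniform neighborhood $N_t \subset \cont^3(K; \Cn)$.

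The hard part will be the analysis of the linearization $(\theta, r)\mapsto \theta\Lam - A_0 r$. Left-multiplying by the polynomial matrix $\Theta_0^{-1}$ (holomorphic on $\Delta$) reduces the equation to $\tilde\theta\Lam - \Lam r = N$ on $\bdisc$, which via the block structure $\Lam = \operatorname{Diag}[\xi, 1, \ldots, 1]$ decouples into four scalar Riemann-Hilbert sub-problems soluble by Fourier decomposition. A direct mode-by-mode count exhibits a finite-dimensional kernel of real dimension $n^2 + (n-1)$ (contributed by real constants, degree-one real trigonometric polynomials arising from the $\xi$-entry of $\Lam$, and real constants from the identity blocks) and a small cokernel of real dimension $n-1$ (obstructions living in certain mean-value imaginary parts of $N$). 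The kernel is killed by prescribing matching finitely many Fourier coefficients of $R$ as normalization; the cokernel must be shown to vanish on the range of $\partial_\phi\mathcal T|_0$ by invoking that $A_\phi$ arises as the pushforward $D\Psi\cdot A_0$ along a normalized analytic disc. Finally, stability of the total index $\sum\kap_i = 2$ (continuity of the winding number of $\det(A_\phi\overline{A_\phi^{-1}})$ in $\phi$) rules out the competing partition $(1,1,0,\ldots,0)$ of $2$, selecting $(2,0,\ldots,0)$ as claimed.
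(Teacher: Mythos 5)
Your route is genuinely different from the paper's, and it has a gap at exactly the point you flag as ``the hard part.'' The paper does not set up a factorization equation and invert its linearization; it reads the required $\Theta$ directly off the foliation already built in Lemma~\ref{lem_discs}: writing $\cF_{\text{bdy}}(\theta,\bt)=F(\bt,\phi)(e^{i\theta})$ and using $\partial/\partial\theta=i\xi\,\partial/\partial\xi$ on $\bdisc$, one has $D_{\theta,\bt}\cF_{\text{bdy}}=\Theta_\bt(\xi)\operatorname{Diag}[\xi,1,\dots,1]$ with $\Theta_\bt(\xi)=D_{\xi,\bt}\cF_\phi(\xi,\bt)\operatorname{Diag}[i,1,\dots,1]$. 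The left factor extends holomorphically to $\Delta$ because $D_\xi\cF_\phi$ does and because $D_\bt F(\bt,\phi)$ takes values in $\ockm{1}{\alp}(\bdisc;\Cn)$, and it is invertible by $\cont^1$-proximity to the unperturbed case; Remark~\ref{rem_indices} then gives the indices at once. In other words, the $(n-1)$-parameter family of attached discs itself supplies the $n-1$ ``index-zero'' holomorphic columns, and no Fredholm analysis is needed.

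The gap in your version is the $(n-1)$-dimensional cokernel. It is not a removable technicality: the splitting type $(2,0,\dots,0)$ is the \emph{non-generic} partition of total index $2$, and under an arbitrary small perturbation of the frame $A$ the partial indices jump to the balanced type $(1,1,0,\dots,0)$ (upper semicontinuity goes the wrong way for you). Correspondingly, the obstruction functionals you identify in the $(j,1)$ entries genuinely do not vanish for a generic $\partial_\phi A_\phi$, and the justification you offer --- that $A_\phi$ is a pushforward $D\Psi\cdot A_0$ along a normalized attached disc --- holds for essentially every perturbation and therefore cannot force the vanishing. What actually kills the obstruction is the existence of the full $(n-1)$-parameter family $\bs\mapsto F(\bs,\phi)$ of nearby attached discs, i.e.\ precisely the input the paper feeds into $\Theta_\bt$; without invoking that, your IFT step cannot close. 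Your fallback is also incorrect as stated: $(1,1,0,\dots,0)$ and $(2,0,\dots,0)$ have the \emph{same} total index $2$, so continuity of the winding number of $\det\bigl(A_\phi\overline{A_\phi^{-1}}\bigr)$ cannot select between them. If you repair the cokernel step by differentiating the family of attached discs in $\bt$, you will find you have reproduced the paper's argument, at which point the implicit function theorem scaffolding is superfluous.
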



\begin{proof} Let $t\in(0,1)$, and $N_t\subset\cont^3(K;\Cn)$ be as in Theorem~\ref{thm_diskconst}.  Recall that $\cF_\phi:(\xi,\bt)\mapsto F(\bt,\phi)(\xi)$ for $(\xi,\bt)\in \cdisc\times\opar{t}$. Note that $\cF_0(\xi,\bt)=(\sqrt{1-||\bt||^2}\,\xi,\bt)$ and
$D_{\xi,\bt}\cF_0(\xi,\bt)\in \Gl(n;\C)$ for all $(\xi,\bt)\in\cdisc\times \opar{t}$. 

Let $\eps>0$. As in the proof of Theorem~\ref{thm_diskconst}, $N_t$ can be chosen so that for each $\phi\in N_t$,
	\begin{enumerate}
		\item $\cF_{\text{bdy}}$ is a $\cont^1$-smooth parametrization of an open
		 totally real subset of $\sn_\phi$; where
		 \bes
		\cF_{\text{bdy}}:(\theta,\bt)\mapsto F(\bt,\phi)(e^{i\theta}), 
				\qquad (e^{i\theta},\bt)\in \bdisc\times\opar{t},
	\ees
		 
		\item $||D_{\xi,\bt}\cF_\phi-D_{\xi,\bt}\cF_0||_\infty<\eps$.
	\end{enumerate}
	
Now, we fix a $\phi \in N_t$ and let $\cF = \cF_\phi$. Since $\partl{}{\theta}=i\xi\partl{}{\xi}$ when $\xi=e^{i\theta}$, we have that 
	\be\label{eq_fact2}
		(D_{\theta,\bt}\cF_{\text{bdy}})(\theta,\bt)
			=\Theta_\bt(\xi)
		\begin{pmatrix}
	   	\xi & \boldsymbol 0\\   
			\boldsymbol 0^{\tr} & \id_{n-1}
    	\end{pmatrix}
				\quad \text{on}\ \bdisc,
	\ee
where 
	\bes
		\Theta_\bt(\xi)=(D_{\xi,\bt}\cF)(\xi,\bt)
				\begin{pmatrix}
	   	i & \boldsymbol 0\\   
			\boldsymbol 0^{\tr} & \id_{n-1}
    	\end{pmatrix}.
	\ees
Owing to $(1)$, the real span of the columns of the matrix $A_\bt(e^{i\theta})=(D_{\theta,\bt}\cF_{\text{bdy}})(\theta,\bt)$ is the tangent space to $\sn_\phi$ at $f_\bt(e^{i\theta})$. By $(2)$, if $\eps>0$ is sufficiently small, then $\Theta_\bt:\cdisc\rightarrow \Gl(n;\C)$. Thus, in order to apply Remark~\ref{rem_indices} to $f=f_\bt$ and $A=A_\bt$, we must show that $A$ is of class $\cont^\alp$, and $\Theta_\bt$ extends holomorphically to $\Delta$. We will, in fact, show that the entries of $(D_{\xi,\bt}\cF)(\cdot,\bt)$ are in $\ockm{1}{\alp}{}(\bdisc)$. First, since $\sn_\phi$ is $\cont^3$-smooth and $\xi\mapsto \cF(\xi,\bt)$ is an analytic disc attached to $\sn_\phi$, $\cF$ is $\cont^{2,\alp}$-smooth in $\xi$. This gives the $\ckm{1}{\alp}$-regularity of $D_\xi\cF(\cdot,\bt)$ on $\bdisc$. Next, note that $\cF(\xi,\bt)=\ev(\xi,F(\bt,\phi))$, where $\ev$ is the map defined in Lemma~\ref{lem_ev}. Thus, $D_{\bt}\cF(\cdot,\bt)(\bs)=D_\bt F(\bt,\phi)(\bs)(\cdot)$. Since $D_\bt F(\bt,\phi)$ is a bounded linear transformation from $\rl^{n-1}$ to $\ockm{1}{\alp}(\bdisc;\Cn)$, the entries of $D_\bt\cF(\cdot,\bt)$ are in $\ockm{1}{\alp}(\bdisc;\Cn)$, and therefore holomorphic in $\xi\in\Delta$. Thus, the indices of $\sn_\phi$ along $f_\bt$ are $2,0,...,0$. 
\end{proof}

 We will now employ the technique from \cite{Gl94} (also see \cite{Fo87}) to parametrize the family of all the analytic discs close to $f_\bt$ that are attached to $\sn_\phi$. Lemmas~\ref{lem_Glob6.1} and ~\ref{lem_Glob7.1} below are the $\ckm{k}{\alp}$-versions of the main results in Section~6 and ~7 of \cite{Gl94}. For the sake of brevity, we will keep some of the computations brief, and direct the reader to \cite{Gl94} for more details.  

For the rest of this section, we fix $t\in(0,1)$, $\phi\in N_t\cap \cont^{2k+1}(K;\Cn)$ ($k\geq 3$) and $\bt\in \opar{t}$. We let $\cM=M_{\text{TR}}$. Recall that by \cite{Ch82}, $f_\bt:\xi\mapsto \cF_\phi(\xi,\bt) $ is in 	$\ockm{2k}{\alp}{}(\bdisc;\Cn)$ and is a normalized analytic disc attached to $\sn_\phi$ (see Remark~\ref{rem_norm}). We fix a tubular neighborhood $\Om$ of $f_\bt(\bdisc)$ in $\Cn$ and a map $\rho^\phi:\Om\rightarrow \rl^n$ such that
	\begin{itemize}
		\item [$\triangleright$] $\rho^\phi=(\rho^\phi_1,...,\rho^\phi_n)
				\in\cont^{2k+1}(\Om;\rl^n)$;
		\item [$\triangleright$] $d\rho_1^\phi\wedge\cdots\wedge d\rho_n^\phi
						\neq 0$ on $\Om$; 
		\item [$\triangleright$]  $\sn_\phi\cap \Om=\{z\in \Om:\rho^\phi(z)=0\}$.
	\end{itemize} 
Let $X_1(\xi)=\partl{f_\bt}{\theta}(\xi)$. Since $\sn_\phi\cap \Om$ is $\cont^{2k+1}$-smooth and totally real, there exist $\cont^{2k}$-smooth maps $X_2,...,X_n:\bdisc\rightarrow\Cn$ such that for each $\xi\in\bdisc$, the real span of $X_1(\xi),...,X_n(\xi)$ is the tangent space to $\sn_\phi$ at $f_\bt(\xi)$. Given $p=(p_1,...,p_n)\in\ckm{k}{\alp}{}(\bdisc;\rl^n)$ and $q=(q_1,...,q_n)\in\ckm{k}{\alp}{}(\bdisc;\rl^n)$, let
	\bes
		\cE(p,q)=
		\sum_{j=1}^np_jX_j+
			i\sum_{j=1}^n(q_j+i\cH(q_j))X_j.
	\ees
Note that $\cE:\ckm{k}{\alp}{}(\bdisc;\rl^n)\times \ckm{k}{\alp}{}(\bdisc;\rl^n)\rightarrow \ckm{k}{\alp}{}(\bdisc;\Cn)$ is a linear isomorphism. This is because $X_j(\xi), iX_j(\xi)$, $1\leq j\leq n$, form a real basis of $\Cn$ and the standard Hilbert transform $\cH:\ckm{k}{\alp}{}(\bdisc;\rl^n)\rightarrow \ckm{k}{\alp}{}(\bdisc;\rl^n)$ is a bounded linear map.
\begin{lemma}\label{lem_Glob6.1} There exist neighborhoods $\cU_1$ of $0$ in $\ckm{k}{\alp}{}(\bdisc;\rl^n)$ and $\cU_2$ of $0$ in $\ckm{k}{\alp}{}(\bdisc;\C^n)$, and a $\cont^{k}$-smooth map $\cD:\cU_1\rightarrow \ckm{k}{\alp}{}(\bdisc;\C^n)$ such that 
	\begin{itemize}
		\item [$(i)$] for any $f\in \cU_2$, $f_\bt+f$ is attached to $\sn_\phi$ if and 
			only if $f=\cD(p)$ for some $p\in \cU_1$; and 
		\item [$(ii)$] there is an $\eta>0$ such that
			$||\cD(p)-\cD(p')||_{k,\alp}\geq \eta||p-p'||_{k,\alp}$
				 for all $p,p'\in \cU_1$.
	\end{itemize}
\end{lemma}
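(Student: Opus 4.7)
The approach is to parametrize those small perturbations of $f_\bt(\bdisc)$ that remain on $\sn_\phi$ by decomposing them through $\cE$ and applying the implicit function theorem to the corresponding nonlinear ``attachedness'' equation. Since $\cE$ is a linear isomorphism, every $f\in\ckm{k}{\alp}(\bdisc;\Cn)$ is uniquely of the form $\cE(p,q)$ with $(p,q)\in\ckm{k}{\alp}(\bdisc;\rl^n)\times\ckm{k}{\alp}(\bdisc;\rl^n)$, and the condition $(f_\bt+\cE(p,q))(\bdisc)\subset\sn_\phi$ is equivalent to
\[
	\cG(p,q):=\rho^{\phi}\!\bigl(f_\bt+\cE(p,q)\bigr)\equiv 0\ \text{on}\ \bdisc.
\]
I would therefore solve $\cG(p,q)=0$ for $q=\cQ(p)$ as a $\cont^k$-smooth function of $p$ near $(0,0)$ and then take $\cD(p):=\cE(p,\cQ(p))$.

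The first step is to check that $\cG$ is a well-defined $\cont^k$-smooth map between the indicated H\"older spaces on some neighborhood of $(0,0)$. Since $\cE$ is bounded linear and $f_\bt\in\ockm{2k}{\alp}(\bdisc;\Cn)$, one can shrink the neighborhood so that $f_\bt+\cE(p,q)$ takes values in the tubular neighborhood $\Om$ of $f_\bt(\bdisc)$ on which $\rho^{\phi}$ is defined. Since $\rho^{\phi}\in\cont^{2k+1}(\Om;\rl^n)$, standard Nemytskii-type estimates then give that $g\mapsto\rho^{\phi}\circ g$ is $\cont^k$-smooth from $\{g\in\ckm{k}{\alp}(\bdisc;\Cn):g(\bdisc)\subset\Om\}$ into $\ckm{k}{\alp}(\bdisc;\rl^n)$. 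This is the main technical step of the proof and is where the $\cont^{2k+1}$-regularity of $\phi$ (hence of $\rho^{\phi}$) is consumed: computing $k$ Fr\'echet derivatives of the composition operator pairs up to $k$-th derivatives of $\rho^{\phi}$ with $k$-th derivatives of $g$, and keeping the output H\"older-continuous forces $\rho^{\phi}\in\cont^{2k}$ at the very least.

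Next I would compute $D_2\cG(0,0)$. Writing $i(q_j+i\cH(q_j))X_j=-\cH(q_j)X_j+iq_jX_j$ in the real basis $\{X_j,iX_j\}$ of $\Cn$ and using the tangency $X_j(\xi)\in T_{f_\bt(\xi)}\sn_\phi$, which gives $d\rho^{\phi}(f_\bt(\xi))\!\cdot\! X_j(\xi)=0$, every term involving the Hilbert transform drops out, leaving the pointwise multiplication operator
\[
	D_2\cG(0,0)(q)(\xi)=M(\xi)q(\xi),\quad M(\xi):=\bigl[\,d\rho^{\phi}(f_\bt(\xi))\!\cdot\!(iX_j(\xi))\,\bigr]_{j=1}^{n}.
\]
Total reality of $\sn_\phi$ forces the vectors $\{iX_j(\xi)\}_{j=1}^{n}$ to be transverse to $T_{f_\bt(\xi)}\sn_\phi$, so $M(\xi)\in\Gl(n;\rl)$ with $\cont^{2k}$-smooth entries on $\bdisc$; hence $D_2\cG(0,0)$ is a linear homeomorphism of $\ckm{k}{\alp}(\bdisc;\rl^n)$. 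Since $\cG(0,0)=\rho^{\phi}(f_\bt)\equiv 0$, the implicit function theorem for Banach spaces recalled earlier produces neighborhoods $\cU_1,\cU_1'$ of $0$ in $\ckm{k}{\alp}(\bdisc;\rl^n)$ and a $\cont^k$-smooth map $\cQ\colon\cU_1\to\cU_1'$ with $\cQ(0)=0$ satisfying $\cG(p,q)=0\Longleftrightarrow q=\cQ(p)$ on $\cU_1\times\cU_1'$.

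Setting $\cD(p):=\cE(p,\cQ(p))$ and $\cU_2:=\cE(\cU_1\times\cU_1')$, claim $(i)$ follows at once from the bijectivity of $\cE$: for $f=\cE(p,q)\in\cU_2$, $f_\bt+f$ has boundary on $\sn_\phi$ iff $\cG(p,q)=0$ iff $q=\cQ(p)$ iff $f=\cD(p)$. For $(ii)$, the linearity of $\cE$ gives $\cD(p)-\cD(p')=\cE(p-p',\cQ(p)-\cQ(p'))$, and because $\cE$ is a bounded linear isomorphism (hence bounded below), one has
\[
	\norm{\cD(p)-\cD(p')}_{k,\alp}\geq\eta\,\norm{p-p'}_{k,\alp}
\]
for some $\eta>0$ independent of $p,p'\in\cU_1$, establishing $(ii)$ and completing the proof.
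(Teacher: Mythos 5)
Your proposal is correct and, for the construction itself, follows the same route as the paper: you decompose perturbations through the linear isomorphism $\cE$, show that $(p,q)\mapsto\rho^\phi(f_\bt+\cE(p,q))$ is $\cont^k$-smooth between the H\"older spaces (the paper cites Lemma~5.1 of Hill--Taiani for the Nemytskii-type regularity you invoke, and this is indeed where the $\cont^{2k+1}$-smoothness of $\rho^\phi$ is spent), observe that the Hilbert-transform terms in $D_2\cG(0,0)$ drop out by tangency of the $X_j$, identify the partial derivative with the multiplication operator by the matrix $\bigl[d\rho^\phi_j(f_\bt)\cdot iX_\ell\bigr]$, whose pointwise invertibility is exactly the paper's total-reality argument, and then apply the implicit function theorem. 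The one place where you genuinely diverge is part $(ii)$: the paper proves the lower bound by computing $(D_p\cD)(0)(h)=\sum_j h_jX_j$, extending $(D_p\cD)(s)$ to a uniformly bounded-below family of isomorphisms, and integrating along a segment in a convex $\cU_1$. You instead use the exact identity $\cD(p)-\cD(p')=\cE\bigl(p-p',\cQ(p)-\cQ(p')\bigr)$ coming from the linearity of $\cE$, together with the boundedness of $\cE^{-1}$, so that $\norm{p-p'}\le\norm{\cE^{-1}}\,\norm{\cD(p)-\cD(p')}$. This is simpler, needs no convexity of $\cU_1$ and no derivative estimates on $\cQ$, and gives the explicit constant $\eta=\norm{\cE^{-1}}^{-1}$; both arguments are valid.
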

\begin{proof}
Let $\cU$ be a neighborhood of $0$ in $\ckm{k}{\alp}{}(\bdisc;\rl^n)$ such that, for all $p,q\in \cU$, $f_\bt(\xi)+\cE (p,q)(\xi)\in \Om$ for all $\xi\in\bdisc$. Consider the map 
	\bes
		\cR:(p,q)\mapsto \left(
			\xi\mapsto \rho^\phi\big(f_\bt(\xi)+\cE(p,q)(\xi)\big)\right)	
	\ees
on $\cU\times\cU$. Note that $\cR(0,0)=0$. By Lemma~5.1 in \cite{HiTa78}, $\cR:\cU\times\cU\rightarrow\ckm{k}{\alp}{}(\bdisc;\rl^n)$ is a $\cont^{k}$-smooth map. We claim that $(D_q\cR)(0,0):\ckm{k}{\alp}{}(\bdisc;\rl^n)\rightarrow \ckm{k}{\alp}{}(\bdisc;\rl^n)	$ is a linear isomorphism. This is because, for $h=(h_1,...,h_n)\in\ckm{k}{\alp}{}(\bdisc;\rl^n)$,
	\beas
		D_q\cR(0,0)(h)&=&
			\sum_{j=1}^n
				h_j\left<\nabla\rho^\phi_j(f_\bt),
					iX_k\right>_{\rl^{2n}}
			-\sum_{j=1}^n
				H(h_j)\left<\nabla\rho^\phi_j(f_\bt),
					X_k\right>_{\rl^{2n}}\\
			&=&
			\begin{pmatrix}
				\left<\nabla\rho^\phi_j(f_\bt),
					iX_k\right>_{\rl^{2n}}
			\end{pmatrix}
						\begin{pmatrix}
				h_1\\
				\vdots\\
				h_n
			\end{pmatrix}
			=C\begin{pmatrix}
				h_1\\
				\vdots\\
				h_n
			\end{pmatrix},
	\eeas
where $C$ is an $n\times n$ matrix with entries in $\ckm{k}{\alp}{}(\bdisc;\rl)$. Note that the second equality follows from the fact that $X_j(\xi)$ are tangential to $\sn_\phi$ at $f_\bt(\xi)$. It suffices to show the invertibility of $C$ at each $\xi\in\bdisc$. If, for some $\xi\in \bdisc$, $C(\xi)$ is not invertible, then there exist $a_1,...,a_n\in\rl$ such that $\sum_{j=1}^n a_jiX_j(\xi)$ is orthogonal to each $\nabla\rho^\phi_k(f_\bt(\xi))$ (as vectors in $\rl^{2n}$), which contradicts the total reality of $\sn_\phi$ at $f_\bt(\xi)$. Thus, by the implicit function theorem applied to $\cR$, there exist neighborhoods $\cU_1,\cU_1'\subseteq\cU$, and a $\cont^{k}$-smooth map $\cQ:\cU_1\rightarrow \cU_1'$ such that	
	\bes
		(p,q)\in \cU_1\times \cU_1'\ \text{satisfies}\ \cR(p,q)=0
			\iff 
		p\in\cU_1\ \text{and}\ q=\cQ(p).
	\ees
Now, setting $\cD(p)=\cE(p,\cQ(p))$, $\cU_2=\cE(\cU_1\times \cU_1')$, and recalling that $\cE$ is a linear isomorphism, we have $(i)$.

To establish $(ii)$, we note that $(D_p\cD)(0):\ckm{k}{\alp}{}(\bdisc;\rl^n)\rightarrow \ckm{k}{\alp}{}(\bdisc;\Cn)$ is the map
	\be\label{eq_derD}
		h\mapsto \sum_{k=1}^n h_jX_j.
	\ee
This computation uses the linearity of $D_q\cR(0,0)$; details can be found in \cite[Lemma~6.2]{Gl94}. Due to the nondegeneracy of the matrix $X=[X_1^{\tr},...,X_n^{\tr}]$, there exists an $\eta>0$ such that, for all $s\in U$ (after shrinking, if necessary), $(D_p\cD)(s)$ extends to a linear isomorphism $\mathcal I_s:\ckm{k}{\alp}{}(\bdisc;\Cn)\rightarrow \ckm{k}{\alp}{}(\bdisc;\Cn)$ satisfying $||\mathcal{I}_s(\cdot)||_{k,\alp}\geq \eta||\cdot||_{k,\alp}$ on $\ckm{k}{\alp}{}(\bdisc;\Cn)$. Assuming $\cU_1$ to be convex, we get $\cD(p')-\cD(p)=\left(\int_0^1 \mathcal I_{p+t(p'-p)}dt\right)(p'-p)$, and thus, 
	\bes
		||\cD(p')-\cD(p)||_{k,\alp}\geq \eta ||p'-p||_{k,\alp}
			\quad p,p'\in\ckm{k}{\alp}{}(\bdisc;\rl^n).
	\ees
\end{proof} 

The neighborhood $\cU_1$ obtained above parametrizes all the $\ckm{k}{\alp}$-discs close to $f_\bt$ that are attached to $\sn_\phi$. Next, we find those elements of $\cU_1$ that parametrize analytic discs attached to $\sn_\phi$. We direct the reader to Remark~\ref{rem_norm} for the definition of a normalized analytic disc. 

\begin{lemma}\label{lem_Glob7.1} There exists an open neighborhood $U$ of $0$ in $\rl^{n-1}$ and a $\cont^{k}$-smooth map $G:U\rightarrow \ockm{k}{\alp}{}(\bdisc;\Cn)$ such that 
	\begin{enumerate}
\item [$(a)$] $G(0)=0$;
\item [$(b)$] for each $\boldsymbol c\in U$, $f_\bt+G(\boldsymbol c)$ extends to a normalized analytic disc attached to $\sn_\phi$;
\item [$(c)$] for each neighborhood $V\subset U$ of $0$ in $\rl^{n-1}$, there is a $\tau_V>0$ so that if $f\in B_{k,\alp}(0;\tau_V)$ is such that $f_\bt+f$ is a normalized analytic disc attached to $\sn_\phi$, then $f=G(\boldsymbol c)$ for some $\boldsymbol c\in V$; 
\item [$(d)$] for each $\boldsymbol c_1,\boldsymbol c_2\in U$, $G(\boldsymbol c_1)\neq G(\boldsymbol c_2)$ if $\boldsymbol c_1\neq \boldsymbol c_2$. 
\item [$(e)$] the map $\cG:\cdisc\times U\rightarrow\Cn$ given by $(\xi,\boldsymbol c)\mapsto f_\bt+G(\boldsymbol c)$ is a $\cont^k$-smooth embedding. 
\end{enumerate}
\end{lemma}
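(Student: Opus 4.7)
The plan is to combine Lemma~\ref{lem_Glob6.1} with two successive selection steps implemented via the implicit function theorem: first impose analyticity on $\cD(p)$ (which reduces the parameter space from the infinite-dimensional $\cU_1$ to a real-$(n+2)$-dimensional submanifold), then impose the normalization $f_1(0)=0$ and $\ima f_1'(0)=0$ (which reduces further to $n-1$). Both reductions rely decisively on the partial-index computation $(2,0,\ldots,0)$ from Lemma~\ref{lem_stabind}.

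For the analyticity step, fix a continuous projection $\Pi$ of $\ckm{k}{\alp}(\bdisc;\Cn)$ onto a closed complement $\mathcal{Y}$ of $\ockm{k}{\alp}(\bdisc;\Cn)$ --- concretely, $\mathcal{Y}$ is the closure of the span of the negative-frequency Fourier modes, and $\Pi$ is the corresponding projection. Then $\cD(p)$ extends analytically if and only if $\Pi\cD(p)=0$. The map $\Pi\circ\cD:\cU_1\to\mathcal{Y}$ is $\cont^k$-smooth, vanishes at $p=0$, and by \eqref{eq_derD} has differential $h\mapsto\Pi(Xh)$ at $0$, where $X$ is the frame with columns $X_1,\ldots,X_n$. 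Using the Birkhoff-type factorization of Remark~\ref{rem_indices} with $\Lam(\xi)=\operatorname{diag}(\xi,1,\ldots,1)$, the Riemann--Hilbert analysis of \cite[Sections~5--7]{Gl94} (see also \cite{Fo87}) yields a topological splitting $\ckm{k}{\alp}(\bdisc;\rl^n)=V\oplus V'$ in which $V$ is the $(n+2)$-dimensional kernel of $h\mapsto\Pi(Xh)$ (spanned by explicit expressions involving $\Theta$ and powers of $\xi$), and the restriction $\Pi(X\cdot)|_{V'}$ is a linear homeomorphism onto $\mathcal{Y}$. The implicit function theorem then produces a $\cont^k$-smooth embedded submanifold $\mathcal{S}_0\subset\cU_1$ of real dimension $n+2$, passing through $0$, on which $\Pi\circ\cD\equiv 0$.

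For the normalization step, on $\mathcal{S}_0$ the analytic extensions of $\cD(p)$ to $\Delta$ are available, so the $\cont^k$-smooth map
\[
\mathcal{N}_0:\mathcal{S}_0\longrightarrow\C\oplus\rl,\qquad \mathcal{N}_0(p)=\bigl(\cD(p)_1(0),\;\ima\,\cD(p)_1'(0)\bigr)
\]
is well-defined. The three real-linear functionals $h\mapsto(Xh)_1(0)$ and $h\mapsto\ima(Xh)_1'(0)$ on $T_0\mathcal{S}_0=V$ correspond to the three-dimensional infinitesimal action of $\operatorname{Aut}(\Delta)$ on analytic discs; using the explicit basis of $V$ afforded by the Birkhoff factorization, these three functionals are linearly independent. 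Hence $\mathcal{N}_0$ is a submersion at $0$, and $\mathcal{S}:=\mathcal{N}_0^{-1}(0)$ is a $\cont^k$-smooth embedded submanifold of real dimension $n-1$. Choosing a $\cont^k$-smooth chart $p:U\to\mathcal{S}$ with $p(0)=0$ from a neighborhood $U$ of $0$ in $\rl^{n-1}$, define $G(\boldsymbol{c}):=\cD(p(\boldsymbol{c}))$. Properties (a) and (b) hold by construction; (c) follows from the bijectivity of $\cD$ between $\cU_1$ and $\cU_2$ (Lemma~\ref{lem_Glob6.1}(i)) together with the uniqueness of $\mathcal{S}$ as the local zero set of the two selection maps; (d) follows from Lemma~\ref{lem_Glob6.1}(ii) and the injectivity of $p$; and (e) follows from the $\cont^k$-smoothness of evaluation in Lemma~\ref{lem_ev} combined with the embedding property of $\cF_\phi$ from Theorem~\ref{thm_diskconst}, of which $\cG$ is a local reparametrization by the uniqueness built into the implicit function arguments.

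The main obstacle is the Birkhoff splitting and the verification that $\Pi(X\cdot)|_{V'}$ is surjective onto $\mathcal{Y}$ with $(n+2)$-dimensional kernel. This is where the partial indices $(2,0,\ldots,0)$ enter decisively: all indices being $\geq -1$ ensures surjectivity of the Riemann--Hilbert operator, while their sum equaling $2$ pins down the kernel dimension as $n+\sum\kap_j=n+2$. A secondary technical point is the linear independence of the three normalization functionals on $V$; this reflects the faithful action of $\operatorname{Aut}(\Delta)$ on generic analytic discs and becomes explicit once the factorization $X=\Theta\Lam\Sigma$ is at hand.
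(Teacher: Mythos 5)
Your proposal is correct and runs on the same engine as the paper's proof --- the partial indices $(2,0,\ldots,0)$ from Lemma~\ref{lem_stabind}, a first reduction to an $(n+2)$-dimensional family of analytic discs, and a second three-dimensional cut by the normalization at the origin --- but the first reduction is executed differently. You treat analyticity as the zero set of $\Pi\circ\cD$ and invoke the implicit function theorem, which forces you to prove that the linearized Riemann--Hilbert operator $h\mapsto\Pi(Xh)$ is surjective onto $\mathcal Y$ with $(n+2)$-dimensional complemented kernel (true, by the index condition $\kap_j\ge -1$, but a genuine extra step). The paper instead observes that $\cD(p)=\cE(p,\cQ(p))=\Theta\big(\sum_j p_jY_j+i\sum_j\cJ(\cQ_j(p))Y_j\big)$, where the second summand and $\Theta^{\pm1}$ are automatically holomorphic on $\Delta$; hence $f_\bt+\cD(p)$ extends holomorphically \emph{exactly} when the linear expression $(\xi p_1,p_2,\ldots,p_n)$ does, i.e.\ exactly when $p\in\cP(\rl^{n+2})\cap\cU_1$. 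This makes the analytic locus an exact linear slice (statement $(\bast)$), needs no surjectivity argument, and incidentally shows your kernel $V$ is just $\cP(\rl^{n+2})$ --- it does not involve $\Theta$, contrary to your parenthetical. Your nonlinear submanifold $\mathcal S_0$ is therefore actually flat, though your argument does not need to know this. Two smaller points: the linear independence of the three normalization functionals on $V$, which you attribute to the faithful $\operatorname{Aut}(\Delta)$-action, is carried out explicitly in the paper as the invertibility of $D_{\mathfrak x}\mathfrak N(0,0)$, whose determinant is $-a(a^2+b^2)$ with $a=\rea f_{t_1}'(0)>0$; you should supply this computation rather than defer it. And for $(b)$ you still need $\rea(f_\bt+G(\boldsymbol c))_1'(0)>0$, which follows by shrinking $U$ so that $|(\pi_1\circ G)(\boldsymbol c)'(0)|<(\pi_1\circ f_\bt)'(0)$, as the paper notes.
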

\begin{proof} In Lemma~\ref{lem_stabind}, we proved that the indices of $\sn_\phi$ along $f_\bt$ are $2,0,...,0$. By Remark~\ref{rem_indices}, there is a map $\Theta=[\Theta_{j\ell}]_{1\leq j,\ell\leq n}\in\ockm{k}{\alp}{}(\bdisc;\Gl(n;\C))$ such that $X=\Theta Y$ on $\bdisc$, where 
	\bes
		Y(\xi)=
			\begin{pmatrix}
	   	\xi & \boldsymbol 0\\   
			\boldsymbol 0^{\tr} & \id_{n-1}
    	\end{pmatrix},\quad \xi\in\bdisc.
	\ees
Since $X_1={\bdy f_\bt}/\bdy\theta$, the above equation gives $(\bdy  f_\bt/{\bdy\theta})_1(\xi)=\xi\Theta_{11}(\xi)$. On the other hand, $(\bdy \fg_\bt/{\bdy\theta})_1(\xi)=i\xi\sqrt{1-||\bt||^2}$. Thus, shrinking $N_t$ in Theorem~\ref{thm_diskconst}, if necessary, we can make
	\bes
		||\Theta_{11}-i\sqrt{1-||\bt||^2}||_{\cont^\infty(\bdisc)}
		\leq ||f_\bt-\fg_\bt||_{\ckm{1}{\alp}(\bdisc)}
	\ees
small enough so that $\Theta_{11}(0)=\frac{1}{2\pi}\int_0^{2\pi}\Theta_{11}(e^{i\theta})d\theta \neq 0$. We work under this assumption for the rest of this proof.   

Now, let $\cU_1$, $\cU_2$ and $\cD$ be as in Lemma~\ref{lem_Glob6.1}. We determine the maps $f=f_\bt+\cD(p)$, $p\in \cU_1$, that extend holomorphically to $\Delta$. We have
	\beas
	 \cD(p)=\cE(p,\cQ(p))&=&
		\sum_{j=1}^n\left(p_j+i(\cQ_j(p)+i\cH\cQ_j(p))\right)X_j\\
		&=&
			\Theta\left( \sum_{j=1}^n p_jY_j+
			i\sum_{j=1}^n(\cQ_j(p)+i\cH\cQ_j(p))Y_j\right).
	\eeas
Note that $f_\bt$, $Y$ and $\cQ(p)+i\cH\cQ(p)$ extend holomorphically to $\Delta$. Moreover, $\Theta$ extends holomorphically to $\Delta$ with values in $\Gl(n;\C)$. Thus, $f=f_\bt+\cE(p,\cQ(p))$ extends holomorphically to $\Delta$ if and only if
	\be\label{eq_hol}
		\xi\mapsto \sum_{j=1}^np_j(\xi)Y_j(\xi)=(\xi p_1(\xi),p_2(\xi),...,p_n(\xi))
	\ee
extends holomorphically to $\Delta$. Let us assume that the map in \eqref{eq_hol} extends holomorphically to $\Delta$. Then, since $p_j$, $j=1,...,n$, are real-valued, we have that $p_j\equiv c_j$ for some real constants $c_2,...,c_n$. Moreover, $p_1(e^{i\theta})=\sum_{j\in\mathbb Z} a_j e^{ij\theta}$ for some $a_j\in\C$ satisfying  $a_0\in\rl$ and $a_j=\overline{a_{-j}}$, $j\in\N$. Thus, $\xi p_1(\xi)$ extends to a holomorphic map on $\Delta$ if and only if $a_j=0$ for all $|j|\geq 2$. Now, let $\mathfrak x =(p,q,r)\in\rl^3$ and $\boldsymbol c=(c_2,...,c_n)\in\rl^{n-1}$, and $\cP:\rl^{n+1}\mapsto \ckm{k}{\alp}{}(\bdisc;\rl^n)$ be the bounded linear map 
		\bes
			(\mathfrak x,\boldsymbol c)=(p,q,r,c_2,...,c_n)\mapsto 
				((p-iq)\overline{\xi}+r+(p+iq)\xi,c_2,...,c_n),
		\ees
then, based on the above argument,
\begin{center}
$(\bast)$ $f\in \cU_2$ extends holormorphically to $\Delta$ if and only if $f=\cD(\cP(\mathfrak x,\boldsymbol c))$ for some $(\mathfrak x,\boldsymbol c)\in \cP^{-1}(\cU_1)$.
\end{center}

Next, in order to reduce the dimension of the parameter space, we set $\mathfrak N=\wt \pi_{\text {ev}}\circ\cD\circ\cP$, where the map $\wt \pi_{\text {ev}}:\ockm{k}{\alp}(\bdisc;\Cn )\rightarrow \rl^3$ is given by $(f_1,...,f_n)\mapsto(\rea f_1(0),\ima f_1(0),\ima (f_1')(0))$. 
Then, $\mathfrak N:\cP^{-1}(\cU_1)\subset\rl^3\times\rl^{n-1}\rightarrow \rl^{3}$ is a $\cont^k$-smooth map with $\mathfrak N(0,0)=0$. We claim that $D_\mathfrak x\mathfrak N(0,0)$ is invertible. For this, using \eqref{eq_derD} and the fact that $X_1=\smpartl{f_\bt}{\theta}=i\xi\smpartl{f_\bt}{\xi}$, we note that
	\beas
	D_\mathfrak x\mathfrak N(0,0)(u,v,w)
&=& D\wt \pi_{\text {ev}}(0)\cdot D\cD(0)
	\left((u-iv)\overline{\xi}+w+(u+iv)\xi,0,...,0\right)\\
&=&D\wt \pi_{\text {ev}}(0)
	\left((u-iv)\tfrac{X_1(\xi)}{\xi}+wX_1(\xi)+(u+iv)\xi X_1(\xi)\right)\\
	&=&(au+bv,bu-av, Bu-Av+aw),
	\eeas
where $a=\rea(f_{t_1})'(0)$, $b=\ima(f_{t_1})'(0)$, $A=\rea(f_{t_1})''(0)$ and $B=\ima(f_{t_1})''(0)$. Here $f_{t_1}$ is the first component of the normalized analytic disc $f_\bt$. Thus, $\rea f_{t_1}'(0)>0$ and $D_\mathfrak x\mathfrak N(0,0)$ is invertible. We may, thus, apply the implicit function theorem to obtain neighborhoods $U$ of $0$ in $\rl^{n-1}$, $U'$ of $0$ in $\rl^3$, and a $\cont^k$-smooth map $\cA:W\rightarrow \rl^3$ such that $\mathfrak N(\mathfrak x,\boldsymbol c)=0$ for 
$(\mathfrak x,\boldsymbol c)\in U'\times U $ if and only if $\mathfrak x=\cA(\boldsymbol c)$. 

 Finally, we let $G:U\rightarrow \ockm{k}{\alp}{}(\bdisc;\Cn)$ be the map given by 
	\bes
		G:\boldsymbol c\mapsto 	\cD\big(\cP(\cA(\boldsymbol c),\boldsymbol c)\big).
	\ees  
It is clear that $G$ is $\cont^k$-smooth and $(a)$ holds. For $(b)$, we note that $(\pi_1\circ G)(\boldsymbol c)(0)=0$ for all $\boldsymbol c\in W$. Furthermore, by shrinking $U$ if necessary, we can ensure that $|(\pi_1\circ G)(\boldsymbol c)'(0)|<|(\pi_1\circ f_\bt)'(0)|$ for all $\boldsymbol c\in U$. Then, since $(\pi_1\circ f_\bt)'(0)>0$ and $\ima (\pi_1\circ G)(\boldsymbol c)'(0) =0$, we have that $\rea (\pi_1\circ G)(\boldsymbol c)'(0)>0$. Claim $(d)$ follows from Lemma~\ref{lem_Glob6.1}~(ii) and the fact that $\cP$ is injective. The argument for $(e)$ is similar to the proof of Theorem~\ref{thm_diskconst}. Now, for $(c)$, we let $V\subset U$ be a neighborhood of $0$ in $\rl^{n-1}$. Since $G$ is injective and continuous, $G(V)$ is open in $G(U)$ (in the subspace topology inherited from $\ckm{k}{\alp}(\bdisc;\Cn)$). Thus, there is an open set $\cV\subset \cU_2$ in $\ckm{k}{\alp}(\bdisc;\Cn)$ such that $G(V)=\cV\cap G(U)$, and so $G^{-1}(\cV)=V$. Thus, combining Lemma~\ref{lem_Glob6.1} $(i)$ and $(\bast)$, we have that, for $f\in \cV$, $f_\bt+f$ is an analytic disc attached to $\sn_\phi$ with $f_1(0)=0$ and $\ima f_1'(0)=0$ if and only if $f=G(c)$ for some $\boldsymbol c\in G^{-1}(\cV)=V$. To complete the proof of $(c)$, we choose $\tau_V>0$ so that $B_{k,\alp}(0;\tau_V)\subset\cV$. 
\end{proof}

\begin{remark}\label{rem_1alpversion}
We may repeat the proof of Lemma~\ref{lem_Glob7.1} in the $\ckm{1}{\alp}$-category to conclude that there exists an open neighborhood $U^*$ of $0$ in $\rl^{n-1}$ and a $\cont^{1}$-smooth injective map $G^*:U^*\rightarrow \ockm{1}{\alp}{}(\bdisc;\Cn)$ with $G^*(0)=0$ such that for each $\boldsymbol c\in U^*$, $f_\bt+G^*(\boldsymbol c)$ extends to a normalized analytic disc attached to $\sn_\phi$. Moreover, 
	\beas
	&(\dagger) \quad \text{for each neighborhood $V\subset U^*$ of $0$, there is a $\tau^*_V>0$ so that if $f\in B_{1,\alp}(0;\tau^*_V)$ and \qquad }& \\
	&\text{$f_\bt+f$ is a normalized analytic disc attached to $\sn_\phi$, then $f=G^*(\boldsymbol c)$ for some $\boldsymbol c\in V$},&
	\eeas
and $\cG^*:\cdisc\times U^*\rightarrow\Cn$ given by $(\xi,\boldsymbol c)\mapsto f_\bt+G(\boldsymbol c)$ is a $\cont^1$-smooth embedding. 
\end{remark}

\noindent {\em Proof of Theorem~\ref{thm_cksmooth}.} Recall that $\cM=M_{\text{TR}}$ is the manifold constructed in Theorem~\ref{thm_diskconst}. We let $\cM_k=\cG(\cdisc\times U)$ and $\cM_1=\cG^*(\cdisc\times U^*)$, where $\cG$ and $\cG^*$ are the maps defined in Lemma~\ref{lem_Glob7.1} $(e)$ and Remark~\ref{rem_1alpversion}, respectively. Note that $\cM$, $\cM_1$ and $\cM_k$ 	each contain the disc $f_\bt(\cdisc)$. To show that near $f_\bt(\cdisc)$, these three manifolds coincide, we will use the following proposition from \cite{Gl94}.

\begin{Prop}[{\cite[Prop.~8.1]{Gl94}}]
Let $X$ be a Banach space, $\om\subset\rl^n$ a neighborhood of $0$ and let $K$, $L:\om\rightarrow X$ be $\cont^1$-smooth maps such that $K(0)=L(0)$ and $(DK)(0)$, $(DL)(0)$ both have rank $n$. Suppose that for every neighborhood of $V\subset \om$ of $0$, there is a neighborhood $V_1\subset \om$ of $0$ such that $K(V_1)\subset L(V)$. Then, there are neighborhoods $\cV_1$ and $\cV_2$ of $0$ such that $K(\cV_1)=L(\cV_2)$.  
\end{Prop}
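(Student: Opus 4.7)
The plan is to exploit the rank-$n$ hypotheses to manufacture a local $\cont^1$-smooth retraction from a neighborhood of $L(0)$ in $X$ back to $\om$, and then use it to upgrade the set-theoretic containment $K(V_1)\subset L(V)$ to an actual pointwise factorization $K=L\circ g$ near $0$. Once this factorization is in place, the classical inverse function theorem in $\rl^n$ applied to $g$ will deliver the desired equality of images.

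First I would produce a local left inverse for $L$. Since $(DL)(0):\rl^n\to X$ has rank $n$, its image is an $n$-dimensional subspace of $X$, and by Hahn--Banach one can select continuous linear functionals $u_1,\ldots,u_n\in X^*$ whose restrictions to this image form a dual basis of $(DL)(0)(\rl^n)$. Define $\Lambda:X\to\rl^n$ by $\Lambda(y)=(u_1(y-L(0)),\ldots,u_n(y-L(0)))$, so that $D(\Lambda\circ L)(0)=\id_{\rl^n}$. The finite-dimensional inverse function theorem produces a $\cont^1$-smooth local inverse $h$ of $\Lambda\circ L$ near $0\in\rl^n$, and setting $\pi=h\circ\Lambda$ gives a $\cont^1$-smooth map defined on an open neighborhood $U$ of $L(0)$ in $X$ with $\pi\circ L=\id_{\om}$ near $0$.

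Next, shrink $V$ so that $L$ is injective on $V$ and $L(V)\subset U$; by hypothesis, there is a neighborhood $V_1$ of $0$ in $\om$ with $K(V_1)\subset L(V)$. For each $x\in V_1$, write $K(x)=L(y)$ for the unique $y\in V$; applying $\pi$ yields $y=\pi(K(x))$, and hence $K=L\circ g$ on $V_1$ with $g:=\pi\circ K:V_1\to V$. Differentiating at $0$ gives $DK(0)=DL(0)\circ Dg(0)$. Because $DL(0)$ is injective on $\rl^n$ and $DK(0)$ has rank $n$, the linear endomorphism $Dg(0)$ of $\rl^n$ must also be injective, hence invertible.

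The inverse function theorem in $\rl^n$ now furnishes a neighborhood $\cV_1\subset V_1$ of $0$ on which $g$ is a $\cont^1$-diffeomorphism onto its image $\cV_2:=g(\cV_1)$, again a neighborhood of $0$. Then
\begin{equation*}
K(\cV_1)=(L\circ g)(\cV_1)=L(g(\cV_1))=L(\cV_2),
\end{equation*}
as required. The only step requiring real care is the construction of $\pi$: in a general Banach space one cannot expect smooth global retractions, and one must be content with a local retraction built from finitely many linear functionals. However, because the image of $DL(0)$ is finite-dimensional, the functionals $u_1,\ldots,u_n$ are always available via Hahn--Banach, and the remainder of the argument is essentially a chain-rule computation combined with the standard inverse function theorem.
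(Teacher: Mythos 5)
The paper offers no proof of this proposition: it is quoted verbatim from Globevnik \cite[Prop.~8.1]{Gl94} and used as a black box, so there is nothing internal to compare against. Your argument is correct and is essentially the standard one (and, up to presentation, Globevnik's): since $DL(0)$ has finite-dimensional image, Hahn--Banach yields a continuous linear $\Lambda:X\to\rl^n$ with $D(\Lambda\circ L)(0)=\id$, the finite-dimensional inverse function theorem converts this into a local $\cont^1$ left inverse $\pi$ of $L$, the hypothesis $K(V_1)\subset L(V)$ then forces the factorization $K=L\circ g$ with $g=\pi\circ K$, and the chain rule together with the rank hypothesis on $DK(0)$ makes $g$ a local diffeomorphism, giving $K(\cV_1)=L(g(\cV_1))$. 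The only point to tighten is that when you shrink $V$ you should also require $V$ to lie inside the neighborhood on which $\pi\circ L=\id$ (injectivity of $L$ on $V$ and $L(V)\subset U$ alone do not literally guarantee $\pi(L(y))=y$ for all $y\in V$); this is a one-line adjustment.
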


We first show that $\cM$ and $\cM_1$ coincide near $f_\bt(\cdisc)$. Shrinking $U^*$ if necessary, we may assume that $\bt+U^*\subset\opar{t}$. We set $\om=U^*\subset\rl^{n-1}$. For $\boldsymbol c\in \om$, we let $K(\boldsymbol c)=F(\bt+\boldsymbol c)$ and $L(\boldsymbol c)=f_\bt+G^*(\boldsymbol c)$, where $F$ and $G^*$ are the maps in Lemma~\ref{lem_discs} and Remark~\ref{rem_1alpversion}, respectively. Note that $K(0)=L(0)=f_\bt$ and $DK(0)$ and $DL(0)$ both have rank $n-1$. Now, let $V\subset \om$ be a neighborhood of $0$. We set $V_1=K^{-1}(B_{1,\alp}(f_\bt;\tau))$, where $\tau<\tau^*_V$ is sufficiently small so that $V_1\subset\om$. Then, for any $\boldsymbol c\in V_1$, $K(\boldsymbol c)$ is a normalized analytic disc attached to $\sn_\phi$ with property that $||K(\boldsymbol c)-f_\bt||_{1,\alp}<\tau<\tau^*_V$. Thus, by $(\dagger)$ in Remark~\ref{rem_1alpversion}, $K(\boldsymbol c)=f_\bt+L(\fd)$ for some $\fd\in V$. Thus, $K(V_1)\subset L(V)$. By the above proposition, there exist neighborhoods $\cV_1$, $\cV_2\subset\om$ of $0$ such that $K(\cV_1)=L(V_2)$. 
This shows that $\cM$ and $\cM_1$ coincide near $f_\bt(\cdisc)$. 

Next, we use the same approach to show that $\cM_1$ and $\cM_k$ coincide near $f_\bt(\cdisc)$.
In this case, we set $K(\boldsymbol c)=\iota\circ G(\boldsymbol c)$ and $L(\boldsymbol c)=G^*(\boldsymbol c)$, where $G^*$ and $G$ are the maps in Remark~\ref{rem_1alpversion} and Lemma~\ref{lem_Glob7.1}, respectively, and $\iota:\ckm{k}{\alp}(\bdisc;\Cn)\rightarrow\ckm{1}{\alp}(\bdisc;\Cn)$ is the inclusion map. Now, let $V\subset\om$ be a neighborhood of $0$. We set $V_1=G^{-1}(B_{k,\alp}(0;\tau))$, where $\tau<\min\{\tau_\om,\tau^*_V\}$ is sufficiently small so that $V_1\subset\om$. Then, for any $\boldsymbol c\in V_1$, $f_\bt+K(\boldsymbol c)$ is a normalized analytic disc attached to $\sn_\phi$ with property that $||K(\boldsymbol c)||_{1,\alp}<||G(\boldsymbol c)||_{k,\alp}<\tau<\tau^*_V$. Thus, by $(\dagger)$ in Remark~\ref{rem_1alpversion}, $f_\bt+K(\boldsymbol c)=f_\bt+L(\fd)$ for some $\fd\in V$. Thus, $K(V_1)\subset L(V)$. Once again, by the above proposition, there exist neighborhoods $\cV_1$, $\cV_2\subset\om$ of $0$ such that $K(\cV_1)=L(V_2)$. 
This shows that $\cM_k$ and $\cM_1$, and therefore $\cM_k$ and $\cM$, coincide near $f_\bt(\cdisc)$. This completes the proof of Theorem~\ref{thm_cksmooth}.
\qed

\section{Proof of parts \texorpdfstring{$(1)$}{c} to \texorpdfstring{$(5)$}{d} in Theorem~\ref{thm_main}}\label{sec_patch}
\subsection{Constructing $M$} So far, we have constructed that portion of the manifold $M$  whose leaves stay bounded away from $\Sing\sn_\phi$. We summarize the results from the previous sections as Theorem~\ref{thm_away} below. Note that we will use the following notation throughout this section. For $t\in(0,1)$,
	\beas	
				\sn_{\lessgtr t}&=&\{(z_1,x')\in\sn:||x'||\lessgtr t\},\\
		\bn_{\lessgtr t}&=&\{(z_1,x')\in\bn:||x'||\lessgtr t\}.
	\eeas
We also refer the reader to Section~\ref{subsec_notn} for the relationship between $\psi$, $\Psi$, $\phi=\operatorname{Inv}(\psi)$ and $\Phi$, and recall that $\sn_\phi=\Psi(\sn)$. Further, in view of Lemma~\ref{lem_sing}, if $\psi$ is sufficiently small, we may assume that $\Psi(\Sing\sn)=\Sing\sn_\phi$.

\begin{thmx}\label{thm_away} Let $k\geq 1$. Given $\de$ small enough, there is a $t\in(0,1)$ and an $\eps_t>0$ such that for all $\psi\in\cont^{2k+1}(\sn;\C)$ with $||\psi||_{\cont^3(\sn;\Cn)}<\eps_t$, there is a $\cont^{k}$-diffeomorphism $\varphiup:\bn_{<t}\rightarrow \Cn$ such that
\begin{itemize}
\item [$(i)$] $\varphiup(\sn_{<t})\subset\sn_\phi$, and for each $\bt\in D^{n-1}(t)$, $\Delta_\bt:=\varphiup\left(\{(z_1,x')\in\bn:x'=\bt\}\right)$ is an analytic disc attached to $\sn_\phi$.
\smallskip
\item [$(ii)$] $||\varphiup-\id||_{\cont^1(\bn_{<t})}<\de^2$.
\smallskip
\item  [$(iii)$] There exist $0<t_1<t<t_2<1$ such that $\Psi(\sn_{<t_1})\Subset \varphiup(\sn_{<t})\Subset\Psi(\sn_{<t_2})$.
\smallskip
\item [$(iv)$] There is a $t_3<t$ such that for $||\bt||\in(t_3,t)$, $\diam(\Delta_\bt)<7\de$ and $\sup_{z\in\Delta_\bt}\dist(z,\Sing\sn_\phi)<7\de$.
\end{itemize}
Moreover, $\varphiup$ has the same regularity as $\psi$, when $\psi$ is either $\cont^\infty$-smooth or real-analytic on $\sn$. 
\end{thmx}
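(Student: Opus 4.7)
The plan is to glue the $(n-1)$-parameter foliation from Theorem~\ref{thm_diskconst} onto the standard radial parametrization of $\bn$ by $\cdisc\times\opar{1}$, producing $\varphiup$ as a single composition. Explicitly, let $\fg\colon\cdisc\times \opar{1}\to\bn$ be the real-analytic map $(\xi,\bt)\mapsto(\sqrt{1-||\bt||^2}\,\xi,\bt)$; this is the $\phi=0$ case of $\cF_\phi$ since $\fg_\bt=F(\bt,0)$, and it restricts to a real-analytic diffeomorphism onto $\overline{\bn_{<t}}$ for any $t<1$. Choose $t\in(0,1)$ sufficiently close to $1$ (pinned down by (iv) below), and define
\bes
\varphiup := \cF_\phi\circ\fg^{-1}\colon\bn_{<t}\to\Cn,
\ees
so that $\varphiup=\id$ when $\phi=0$.

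The regularity claims follow at once from the analogous statements for $\cF_\phi$: Theorem~\ref{thm_cksmooth} gives $\cont^k$-smoothness for $\phi\in\cont^{2k+1}$ (hence $\cont^\infty$-smoothness for smooth $\phi$), while \S\ref{sec_realan} gives real-analyticity when $\phi$ is real-analytic; since $\fg^{-1}$ is real-analytic on $\overline{\bn_{<t}}$, post-composition does not disturb any of these categories. For (ii), apply the continuous dependence $\phi\mapsto \cF_\phi\in\cont^1(\cdisc\times\opar{t};\Cn)$ from Theorem~\ref{thm_diskconst}: shrinking $\eps_t$ forces $||\cF_\phi-\cF_0||_{\cont^1}$ small enough that $||\varphiup-\id||_{\cont^1(\bn_{<t})}<\de^2$. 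Claim (i) is then immediate: $\Delta_\bt=\cF_\phi(\cdisc,\bt)$ is exactly the analytic disc $F(\bt,\phi)(\cdisc)$ of Lemma~\ref{lem_discs}, whose boundary lies in $\sn_\phi$. Because $\varphiup$ is $\cont^1$-close to identity on the relatively compact domain $\bn_{<t}$, it is a global embedding, and (iii) follows by post-composing with $\Phi=\Psi^{-1}$: the restriction $\Phi\circ\varphiup\colon\sn_{<t}\to\sn$ is $\cont^1$-close to identity, so there exist $t_1<t<t_2$ with $\sn_{<t_1}\Subset (\Phi\circ\varphiup)(\sn_{<t})\Subset\sn_{<t_2}$; applying $\Psi$ gives (iii).

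For (iv) the selection of $t$ comes into play. The unperturbed disc $\fg_\bt(\cdisc)$ has diameter $2\sqrt{1-||\bt||^2}$, and each of its points lies within distance $\sqrt{2(1-||\bt||)}$ of the singular point $(0,\bt/||\bt||)\in\Sing\sn$, so by taking $t$ close enough to $1$ both quantities drop below $3\de$ whenever $||\bt||>t_3$, for some $t_3<t$; combining with (ii) and the $\cont^0$-closeness of $\Sing\sn_\phi$ to $\Sing\sn$ from Lemma~\ref{lem_sing}, we obtain the $7\de$ bounds required in (iv). The main obstacle in executing this plan is one of \emph{consistency}: one must verify that the three tool-sets (Theorem~\ref{thm_diskconst} in $\cont^1$, \S\ref{sec_realan} in the real-analytic category, and Theorem~\ref{thm_cksmooth} in $\cont^k$) all produce the \emph{same} foliation, so that a single formula for $\varphiup$ serves in every regularity class. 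This is ensured by the uniqueness clauses of the implicit function theorems used in the constructions (equations~\eqref{eq_uniq1} and~\eqref{eq_uniq2}) together with the identification $\cM_k=\cM_1=\cM$ near each leaf established at the end of the proof of Theorem~\ref{thm_cksmooth}; once this is in hand, $\varphiup=\cF_\phi\circ\fg^{-1}$ delivers all conclusions of the theorem uniformly.
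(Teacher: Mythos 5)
Your proposal is correct and follows essentially the same route as the paper: $\varphiup$ is obtained from the foliation map $\cF_\phi$ of Theorem~\ref{thm_diskconst} (reparametrized over $\bn_{<t}$ via $\fg$), with regularity from \S\ref{sec_realan} and Theorem~\ref{thm_cksmooth}, and claims (ii)--(iv) from the $\cont^1$-closeness to the identity together with elementary estimates on the unperturbed discs. The only difference is cosmetic: the paper pins down explicit values $t=\sqrt{1-\de^2}$, $t_1=\sqrt{1-4\de^2}$, $t_2=\sqrt{1-\de^2/4}$, $t_3=\sqrt{1-8\de^2}$ and runs the triangle inequalities explicitly, whereas you leave these choices implicit.
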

\begin{proof} Let $\de\in(0,1)$ and $t=\sqrt{1-\de^2}$. Let $\eps_\eta>0$ be as in Lemma~\ref{lem_sing} for $\eta=\de^2$. Let $N_t\subset\cont^3(K;\Cn)$ be as in Theorem~\ref{thm_diskconst} (and Theorem~\ref{thm_cksmooth}). We choose $\eps(t)>0$ so that $||\psi||_{\cont^3(\sn;\Cn)}<\eps(t)$ implies that $\phi=\operatorname{Inv}(\psi)\in N_t$. Finally, we set $\eps_t=\min\{\eps_\eta,\eps(t),\de^2\}$. Then, $(i)$ and $(ii)$ follow from the construction in the previous section. 

For $(iii)$, we let $t_1=\sqrt{1-4\de^2}$. Note that $\varphiup(\sn_{<t_1})\Subset\varphiup(\sn_{<t})$ are connected open sets in $\sn_\phi$, and if $z\in\bdy \sn_{<t_1}$ and $w\in\bdy\sn_{<t}$,
	\beas
	||\varphiup(z)-\varphiup(w)||&\geq& ||z-w||-||\varphiup(z)-z||
											-||\varphiup(w)-w||\\
									&> & \frac{\de}{2}-\de^2-\de^2>2\de^2,
	\eeas
for sufficiently small $\de$. Thus, the ($2\de^2$)-neighborhood of $\varphiup(\sn_{<t_1})$ in $\sn_\phi$ is compactly contained in $\varphiup(\sn_{<t})$. But this neighborhood contains $\Psi(\sn_{<t_1})$ since $||\varphiup-\Psi||<2\de^2$. Thus, we have half of $(iii)$. For the second half of $(iii)$, we set $t_2=\sqrt{1-\de^2/4}$ and repeat a similar argument. 

For $(iv)$, we note that since $||\psi||_{\cont^3}<\eps_\eta$, we have that $||\Psi-\id||_{\cont^2(\sn)}<\de^2$ (see Lemma~\ref{lem_sing}). Hence, for $||\bt||\in\left(\sqrt{1-8\de^2},\sqrt{1-\de^2}\right)$, we have that for any $p,q\in\Delta_\bt$, 
	\beas
		||p-q||&\leq &||p-\varphiup^{-1}(p)||
			+||\varphiup^{-1}(p)-\varphiup^{-1}(q)||+||\varphiup^{-1}(q)-q||\\
				&\leq & \de^2+4\sqrt{2}\de+\de^2<7\de, 
	\eeas
for sufficiently small $\de$. A similar argument also gives the second part of $(iv)$.
\end{proof}

To construct $M$ near $\Sing\sn_\phi$, we will rely on the deep work of Kenig-Webster and Huang (see \cite{KeWe84} and \cite{Hu98}, respectively), where the local hull of holomorphy of an $n$-dimensional submanifold in $\Cn$ at a nondengenerate elliptic CR singularity is completely described. Although their results are local, the proofs in \cite{KeWe84} and \cite{Hu98} yield the following version of their result. Once again, we are using the compactness of $\Sing\sn_\phi$.  

\begin{thmx}[Kenig-Webster \cite{KeWe84}, Huang \cite{Hu98}]\label{thm_KWH} Let $k>> 8$ and $m_k=\lfloor\frac{k-1}{7}\rfloor$. There exist $\de_j>0$, $j=1,2,3$, and $\eps^*>0$ such that for any $\psi\in\cont^k(\sn;\Cn)$ with $||\psi||_{\cont^3(\sn;\Cn)}<\eps^*$, there is a $\cont^{m_k}$-smooth $(n+1)$-dimensional manifold $\widetilde M_{\de_1,\de_2}^\phi$ in $\Cn$ that contains some neighborhood of $\Sing\sn_\phi$ in $\sn_\phi$ as an open subset of its boundary and is such that
	\begin{itemize}
\item [$(a)$] Any analytic disc $f:\Delta\rightarrow\Cn$ that is smooth upto the boundary with $f(\bdisc)\subset\sn_\phi$, $\diam(f(\Delta))<\de_1$ and $\sup_{z\in f(\Delta)}\dist(z,\Sing\sn_\phi)<\de_2$, is a reparametrization of a leaf in $\widetilde M_{\de_1,\de_2}^\phi$.
\smallskip
\item [$(b)$] $\Psi\left(\{z\in\sn:\dist(z,\Sing\sn)<\de_3\}\right)\Subset \bdy\widetilde M_{\de_1,\de_2}^\phi$. Further, if $p\in\widetilde M_{\de_1,\de_2}^\phi$ is such that \linebreak $\dist(\Psi^{-1}(p),\Sing\sn)<\de_3$, then there is an embedded disk,  $f:\Delta\rightarrow\Cn$ (unique upto reparametrization) that is smooth upto the boundary, with $p\in f(\Delta)$, $f(\bdisc)\subset\sn_\phi$ and $f(\cdisc)\subset \widetilde M_{\de_1,\de_2}^\phi$, and the union of all such disks is a smooth $(n+1)$-dimensional submanifold, $\widetilde M_{\de_1,\de_2,\de_3'}^\phi$, of $\widetilde M_{\de_1,\de_2}^\phi$.
\smallskip
\item [$(c)$]If $\Pi$ is the projection map $(z_1,x'+iy')\mapsto (y')$ on $\Cn$, then $\left\lVert\Pi\Big|_{\widetilde M_{\de_1,\de_2}^\phi}\right\rVert_{\cont^1}\approx 0$. 
\end{itemize}
Moreover, $\widetilde M_{\de_1,\de_2}^\phi$ has the same regularity as $\psi$, when $\psi$ is either $\cont^\infty$-smooth or real-analytic on $\sn$. 
\end{thmx}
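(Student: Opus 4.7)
The plan is to reduce Theorem~\ref{thm_KWH} to the local results of Kenig-Webster (\cite{KeWe84}) and Huang (\cite{Hu98}) applied pointwise along the compact singular set $\Sing\sn_\phi$, and then to patch the resulting local hulls using the uniqueness of Bishop disks with small diameter. The regularity exponent $m_k = \lfloor (k-1)/7\rfloor$ reflects the derivative loss in the rapid convergence / KAM-style iteration that underlies the local construction.

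\textbf{Step 1 (Local normal form and local hulls).} By Lemma~\ref{lem_sing}, for $\psi$ sufficiently $\cont^3$-small every $p\in\Sing\sn_\phi$ is a nondegenerate elliptic CR singularity of $\sn_\phi$. Standard results on CR singularities produce, in a neighborhood $U_p$ of each such $p$, a $\cont^k$-holomorphic change of coordinates $\Xi_p:U_p\to V_p$ bringing $\sn_\phi$ into Bishop's normal form
\[
z_n=|z_1|^2+2\lam_p\rea(z_1^2)+O(|z|^3),\qquad y_j=O(|z|^3),\ \ j=2,\ldots,n-1,
\]
with elliptic parameter $\lam_p\in[0,\tfrac12)$. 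The chart $\Xi_p$ and the parameter $\lam_p$ depend $\cont^{k-2}$-smoothly on $p\in\Sing\sn_\phi$ and continuously on $\psi$ in the $\cont^k$-topology. The theorems of Kenig-Webster (when $n=2$) and Huang (when $n\ge 3$) then produce a local $\cont^{m_k}$-smooth $(n+1)$-manifold $\widetilde M_p\subset V_p$ foliated by a one-parameter family of Bishop disks attached to $\sn_\phi$ near $p$, which serves as the local envelope of holomorphy. The construction is effected by a rapidly convergent iteration scheme which, in exchange for the geometric convergence, loses a fixed fraction of derivatives at each step; one verifies that the total derivative loss is bounded by $7$ (cf.\ the explicit bookkeeping in \cite{KeWe84} and \cite{Hu98}), whence the $\cont^{m_k}$-regularity.

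\textbf{Step 2 (Uniform constants by compactness).} The local construction above yields, at each $p$, thresholds $\de_1(p),\de_2(p),\de_3(p),\eps^*(p)>0$ controlling the diameter of admissible Bishop disks, their proximity to $\Sing\sn_\phi$, the size of the reference neighborhood in $\sn_\phi$, and the allowed $\cont^3$-size of $\psi$. Because the Bishop normal form varies continuously with $p$ and the parameters $\lam_p$ stay in a compact subinterval of $[0,\tfrac12)$ (the nondegeneracy is global by Lemma~\ref{lem_sing}), all constants can be taken continuous in $p$. Compactness of $\Sing\sn_\phi\cong \sn^{n-2}$ then yields uniform $\de_1,\de_2,\de_3,\eps^*$ valid at every point of $\Sing\sn_\phi$.

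\textbf{Step 3 (Patching via uniqueness of Bishop disks).} Extract a finite subcover $\{U_{p_j}\}_{j=1}^N$ of $\Sing\sn_\phi$, with associated local hulls $\widetilde M_{p_j}$. On any overlap $U_{p_i}\cap U_{p_j}$ the uniqueness clause in the Kenig-Webster-Huang theorem — namely that any sufficiently small analytic disc attached to $\sn_\phi$ and staying close to $\Sing\sn_\phi$ is (a reparametrization of) a leaf of the local hull — forces $\widetilde M_{p_i}$ and $\widetilde M_{p_j}$ to coincide on the overlap. Hence these local pieces glue into a globally defined $\cont^{m_k}$-smooth manifold $\widetilde M^\phi_{\de_1,\de_2}\subset\Cn$.

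\textbf{Step 4 (Verification of (a)--(c)).} Property~$(a)$ is exactly the global version of the Kenig-Webster-Huang uniqueness statement used in Step~3, reformulated via the diameter and proximity thresholds. For~$(b)$, the local hulls are known to contain an open neighborhood of each $p\in\Sing\sn_\phi$ inside $\sn_\phi$ as part of their smooth boundary; choosing $\de_3$ smaller than the minimum boundary thickness yields the first assertion. The smooth submanifold $\widetilde M^\phi_{\de_1,\de_2,\de_3'}$ is obtained by restricting to the union of those complete Bishop disks whose boundary circle lies within $\de_3'$ of $\Sing\sn$; smoothness off the singular set follows from the smooth dependence of Bishop disks on their boundary circle (cf.\ Sections~\ref{subsec_cksmooth} and~\ref{sec_realan}), and matches the Kenig-Webster-Huang regularity near the singular set. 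Property~$(c)$ is a perturbative statement: for $\psi\equiv 0$ the hull is $\bn\subset \C\times\rl^{n-1}$, so $\Pi$ vanishes identically on $\widetilde M^0_{\de_1,\de_2}$; the continuous dependence of the normal form and the iteration on the $\cont^3$-data of $\psi$ then gives $\bigl\lVert\Pi|_{\widetilde M^\phi_{\de_1,\de_2}}\bigr\rVert_{\cont^1}\lesssim \|\psi\|_{\cont^3}$, which is as small as we like by shrinking $\eps^*$. The real-analytic and $\cont^\infty$-versions follow from the corresponding regularity statements of Huang-Krantz and Moser-Webster at each $p$.

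\textbf{Main obstacle.} The genuine work is not in any single step but in ensuring that the implicit constants in the Kenig-Webster-Huang iteration depend in a quantitatively controlled way on the base point $p\in\Sing\sn_\phi$ and the perturbation $\psi$. This is why Lemma~\ref{lem_sing} — guaranteeing that every CR singular point of $\sn_\phi$ is nondegenerate elliptic with $\lam_p$ bounded strictly away from $\tfrac12$ — is essential: without uniform ellipticity, the local thresholds $\de_j(p)$ could collapse and patching would fail. The second subtle point is making the uniqueness of small Bishop disks used in Step~3 robust in the $\cont^3$-regularity class, so that the patched manifold is unambiguous.
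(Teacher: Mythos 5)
Your proposal follows the same route the paper takes: the paper does not prove Theorem~\ref{thm_KWH} in detail but explicitly derives it from the local results of \cite{KeWe84} and \cite{Hu98} together with the compactness of $\Sing\sn_\phi$ (and the global nondegenerate ellipticity furnished by Lemma~\ref{lem_sing}), which is precisely your localize--uniformize--patch scheme. One small imprecision: the exponent $m_k=\lfloor\frac{k-1}{7}\rfloor$ reflects a multiplicative (factor-of-$7$) loss of derivatives in the Kenig--Webster/Huang iteration, not an additive "total loss bounded by $7$" as one phrase in your Step~1 suggests; your earlier wording "loses a fixed fraction of derivatives" is the correct reading.
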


Now, given $\de_j$, $j=1,2,3$, and $\eps^*>0$ as in Theorem~\ref{thm_KWH}, we let $\de=\min\{\frac{\de_1}{7},\frac{\de_2}{7},\frac{\de_3}{3}\}$ and $\eps=\min\{\eps_t,\eps^*\}$, where $t>0$ and $\eps_t>0$ correspond to $\de$ as in Theorem~\ref{thm_away} (shrinking $\de$ further, if necessary). Then, for $\psi\in\cont^k(\sn;\Cn)$ with $||\psi||_{\cont^3(\sn;\Cn)}<\eps$, we let	 
	\bes
		M=\varphiup(\bn_{<t})\cup\widetilde M_{\de_1,\de_2,\de_3}^\phi.
	\ees
We now proceed to show that this indeed gives (up to an adjustment) the desired manifold. First, by Theorem~\ref{thm_away} $(iii)$ and Theorem~\ref{thm_KWH} $(b)$,
	\bes
		\sn_\phi=\Psi\left(\sn_{<\sqrt{1-4\de^2}}\right)
			\cup\Psi\left(\sn_{\geq \sqrt{1-4\de^2}}\right)\subset\bdy M\subseteq\sn_\phi.
	\ees
This follows from the fact that $\dist(z,\Sing\sn)\lesssim 2\de<\de_3$, when $z\in \sn_{\geq \sqrt{1-4\de^2}}$.  

Next, for the foliated structure and the regularity of $M$, we need only focus on $\varphiup(\bn_{<t})\cap\widetilde M_{\de_1,\de_2,\de_3}^\phi$. Let $p\in\varphiup(\bn_{<t})\cap\widetilde M_{\de_1,\de_2,\de_3}^\phi$. Then, $p=\varphiup(z_1,\bt)$ for some $(z_1,\bt)\in\bn_{<t}$, where recall that $t=\sqrt{1-\de^2}$. We first assume that $||\bt||>t_3=\sqrt{1-8\de^2}$. Then, by the choice of $\de$ and Theorem~\ref{thm_away} $(iv)$, $\diam(\Delta_\bt)<\de_1$, $\sup_{z\in\Delta_\bt}\dist(z,\Sing\sn_\phi)<\de_2$ and  $\dist(p,\Sing\sn_\phi)<\de_3$. Thus, by Theorem~\ref{thm_KWH} $(b)$, $\cdisc_\bt\subset\varphiup(\bn_{<t})\cap\widetilde M_{\de_1,\de_2,\de_3}^\phi$. By this argument, we see that the smooth $(n+1)$-dimensional manifold
	\bes
		B_{t_3,t}:=\bigcup_{t_3<||\bs||<t}\cdisc_\bt 
	\ees
lies in $\varphiup(\bn_{<t})\cap\widetilde M_{\de_1,\de_2,\de_3}^\phi$. Thus, $ M$ is a smooth manifold in a neighborhood of $p$. 

Next, suppose $p=\varphiup(z_1,\bt)\in \varphiup(\bn_{<t})\cap\widetilde M_{\de_1,\de_2,\de_3}^\phi$ is such that $||\bt||\leq \sqrt{1-8\de^2}$. We observe that the complement of $\bdy B_{t_3,t}$ in $\sn_\phi\cap\bdy\widetilde M_{\de_1,\de_2,\de_3}^\phi$ consists of two disjoint submanifolds of $\sn_\phi$ --- one, say $S_{\!\text I}$, containing $\Sing\sn_\phi$ and contained in a $(2\de)$-neighborhood of $\Sing\sn_\phi$, and another, say $S_{\!\text{II}}$, with the property that $\dist(S_{\!\text{II}},\Sing\sn_\phi)=2\sqrt{2}\de+O(\de^2)>2\de$. Since $p\in\widetilde M_{\de_1,\de_2,\de_3}^\phi$, it lies on some analytic disc $f(\Delta)$ attached to $\sn_\phi\cap\bdy M_{\de_1,\de_2,\de_3}^\phi$. By the uniqueness of these discs, $f(\bdy\Delta)$ cannot intersect $\bdy B_{t_3,t}$ because any disc whose boundary intersects $\bdy B_{t_3,t}$ lies completely in $B_{t_3,t}$ (as seen above), and $p\in f(\Delta)$ does not. Thus, either $f(\bdy\Delta)\subset S_{\!\text{I}}$ or $f(\bdisc)\subset S_{\!\text{II}}$ (as the two are disjoint). 
But since $S_{\!\text{I}}$ lies in the tubular $(2\de)$-neighborhood of $\Sing\sn_\phi$, which is a polynomially convex set, we must have that if $f(\bdisc)\subset S_{\!\text{I}}$, then $\dist(p,\Sing\sn_\phi)<2\de$. This contradicts the fact that $p=\varphi(z_1,\bt)$ with $||\bt||\leq \sqrt{1-8\de^2}$. Thus, $f(\bdisc)\subset S_{\!\text{II}}$. This, and the fact that 
	\bes
		 \Psi\left(\sn_{\geq \sqrt{1-4\de^2}}\right)\subset 
			S_{\!\text{I}}\cup\bdy B_{t_3,t}
	\ees
shows that if we shrink $\widetilde M_{\de_1,\de_2,\de_3}^\phi$ by removing $S_{\!\text{II}}$ and the discs attached to it, then 
	\bes
		M=\varphiup(\bn_{<t})\cup\widetilde M_{\de_1,\de_2,\de_3}^\phi
	\ees
is an $(n+1)$-dimensional manifold, as smooth as $\widetilde M_{\de_1,\de_2,\de_3}^\phi$, and is  foliated by analytic discs attached to its boundary $\sn_\phi$. Moreover, $ M$ is a $\cont^1$-small perturbation of $\bn$ in $\Cn$.

\subsection{$M$ as a graph}\label{subsec_graph} 
Let $\Pi:\Cn\rightarrow\Cn\times\rl^{n-1}$ be the map $(z_1,x'+iy')\mapsto (z_1,x')$. For $\psi\in\cont^k(\sn;\Cn)$ as above, we note that since $\sn_\phi$ is a $\cont^3$-small perturbation of $\sn$, we may write $\sn_\phi=\gr_{\bdy\Om}(h)$, where $\bdy\Om\subset\C\times\rl^{n-1}$ is a $\cont^k$-smooth $n$-dimensional manifold that is a $\cont^3$-small perturbation of $\sn$, and $h:\bdy\Om\rightarrow\rl^{n-1}$ is a $\cont^k$-smooth map that is $\cont^3$-close to zero. We make two observations. Since $\sn_\phi$ lies in the strongly pseudoconvex hypersurface $\bdy\Om\times i\rl^{n-1}$, $ M\subset \overline{\Om}\times i\rl^{n-1}$ with $\text{int}\: M\subset \Om\times i\rl^{n-1}$.
Next, since $T_p( M)$ at any $p\in M$ is a small perturbation of $T_{\Pi(p)}(\overline{\Om})$ (as manifolds with boundary in $\Cn$), $\Pi: M\rightarrow\overline{\Om}$ is a local diffeomorphism that restricts to a diffeomorphism between $\sn_\phi$ and $\bdy\Om$. Thus, $\Pi$ extends to a $\cont^{m_k}$-smooth diffeomorphism from $ M$ to $\overline{\Om}$, and we may write $ M=\gr{\left(H\right)}$ for some $\cont^1$-small $H:\overline{\Om}\rightarrow\rl^{n-1}$. Lastly, $H$ has the same regularity as $M$, when $M$ is either smooth or real-analytic. 

\section{Proof of $(6)$ in Theorem~\ref{thm_main}}
\label{sec_final}
\subsection{On the analytic extendability of $M$} In this section, we fix our attention on real-analytic perturbations of $\sn$. So far, we have: given $\de>0$, there is an $\eps>0$ so that for any any $\psi\in\cont^\om(\sn;\C)$ with $||\psi||_{\cont^3(\sn)}<\eps$, there is a $\cont^\om$-domain $\Om_\phi\subset\C\times\rl^{n-1}$, and a $\cont^\om$-map $H:\overline\Om_\phi\rightarrow\rl^{n-1}$, such that 
\begin{itemize}
\item [$\star$] $\bdy\Om_\phi$ and $H|_{\bdy\Om_\phi}$ are $\eps$-small perturbations (in $\cont^3$-norm) of $\sn$ and the zero map, respectively,
\item [$\star$] $M_\phi=\gr_{\Om_\phi}(H)$ is foliated by an $(n-1)$-parameter family of embedded analytic discs attached to $\sn_\phi$, and $||H||_{\cont^1(\overline\Om_\phi)}<\de$.
\end{itemize}

Now, assuming a lower bound on the radius of convergence of $\psi$, we establish the analytic extendability of $H$ (and therefore, $M$). Here, we identify $\psi\in\cont^\om(\sn;\C)$ with its complexification $\psi_\C$ on $\sn_\C$, where $\sn_\C=\{(z,\zbar)\in\C^{2n}:z\in\sn\}$. For $\rho>0$, we let $\cN_\rho\sn_\C=\{\xi\in\C^{2n}:\dist(\xi,\sn_\C)<\rho\}$ and $V_\rho(M_\phi)=\{z\in\Cn:\dist(z,M_\phi)<\rho\}$. 

\begin{Prop}\label{prop_graph} Given $\rho>0$, there is a $\rho '>0$ such that, 
 for every $\de>0$, there is an $\eps>0$ so that for $\psi_\C\in\hol(\cN_\rho\sn_\C)$, $\sup_{\overline{\cN_\rho\sn_\C}}||\psi_\C||<\eps$, there is a map $\fH:V_{\rho'}(M_\phi)\rightarrow\C^{n-1}$, holomorphic in $z_1$, $\zobar$ and $z'$, with $||\fH(z_1,\zobar,z')-z'||_{\cont^2}<\de$, 
such that $M_\phi$ is an open subset of $\{\zbar '=\fH(z_1,\zobar,z'):(z_1,z')\in V_{\rho'}(M_\phi)\}$.
\end{Prop}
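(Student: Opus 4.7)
The approach is to apply the holomorphic implicit function theorem to solve for $\overline{z'}$ as a holomorphic function of the formally independent variables $(z_1, \zobar, z')$, using a uniform complex extension of the real-analytic graphing function $H$ constructed in Section~\ref{subsec_graph}.

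The first step is to show that $H\colon \overline{\Om_\phi} \to \rl^{n-1}$ extends to a holomorphic function $\tilde H(X_1, Y_1, X')$ on a complex $\rho''$-neighborhood of $\overline{\Om_\phi}$ in $\C \times \C \times \C^{n-1}$, where $\rho'' > 0$ depends only on $\rho$. This requires tracking analyticity bounds quantitatively through the earlier constructions: the analytic implicit function theorem applied to the operator $R_\phi$ in Section~\ref{sec_realan} produces the attached discs $F_\phi$ with a radius of analyticity in $\bt$ controlled by $\|\psi_\C\|_{\overline{\cN_\rho \sn_\C}}$ (since $R_\phi$ itself is defined and analytic on a Banach-space ball of comparable size); the Kenig--Webster / Huang theorem (Theorem~\ref{thm_KWH}) provides uniformly analytic local hulls near $\Sing \sn_\phi$; and the patching of Section~\ref{subsec_graph} preserves analyticity by the uniqueness of the attached discs. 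Compactness of $\sn$ and of $\Sing \sn_\phi$ converts pointwise analytic bounds into uniform ones.

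With $\tilde H$ in hand, define, for $(z_1, \zeta_1, z', \zeta') \in \C \times \C \times \C^{n-1} \times \C^{n-1}$ near $\{(z_1, \zobar, z', \overline{z'}) : (z_1, z') \in M_\phi\}$,
\begin{equation*}
F(z_1, \zeta_1, z', \zeta') \;=\; \frac{z' - \zeta'}{2i} \;-\; \tilde H\!\left(\frac{z_1+\zeta_1}{2},\; \frac{z_1-\zeta_1}{2i},\; \frac{z'+\zeta'}{2}\right),
\end{equation*}
holomorphic in all four arguments and vanishing on the complexification of the graph equation $\ima z' = H(\rea z_1, \ima z_1, \rea z')$. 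When $\psi \equiv 0$, one has $\tilde H \equiv 0$, so $F = (z'-\zeta')/(2i)$ and $\partial_{\zeta'} F = -\tfrac{1}{2i}\,\id$. For $\eps$ sufficiently small (depending on $\rho$), $\partial_{\zeta'} F$ remains a uniform linear isomorphism along the real locus, and the holomorphic implicit function theorem yields a unique holomorphic solution $\zeta' = \fH(z_1, \zeta_1, z')$ on a complex neighborhood of size $\rho' > 0$ depending only on $\rho$. Restricting to $\zeta_1 = \zobar$ produces the required $\fH$ on $V_{\rho'}(M_\phi)$. The defining identity $\overline{z'} = \fH(z_1, \zobar, z')$ holds on $M_\phi$ by construction, so $M_\phi$ is open in $\{\overline{z'} = \fH\}$, as both are $(n+1)$-dimensional real-analytic submanifolds. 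The $\cont^2$-smallness of $\fH(z_1, \zobar, z') - z'$ follows from Cauchy estimates applied to the holomorphic extension: the IFT bounds $\|\fH - z'\|_\infty$ linearly in $\|\tilde H\|_\infty$, and holomorphicity converts sup bounds to $\cont^2$ bounds after slight shrinkage, yielding $\de$-smallness for sufficiently small $\eps$.

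The main obstacle is the first step: establishing a uniform-in-$\psi$ radius of analytic extension for $H$. The analytic implicit function theorem used in Sections~\ref{sec_realan} and~\ref{sec_patch} is qualitative, so a direct reading gives analyticity of $H$ but no explicit bound on the radius of convergence. The needed uniform bound is extracted by observing that $R_\phi$ is defined and analytic on a Banach-space ball whose size depends only on $\|\psi_\C\|_{\overline{\cN_\rho \sn_\C}}$, which by the quantitative analytic IFT yields a uniform radius for $F_\phi$; a parallel accounting in the Kenig--Webster / Huang construction gives uniform local analytic extensions near CR singularities, which are patched using compactness of $\Sing \sn_\phi$. Once these uniform analytic estimates are in place, the remaining steps are standard holomorphic analysis.
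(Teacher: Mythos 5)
Your second step (polarize the extended graphing function, set $F(z_1,\zeta_1,z',\zeta')=\frac{z'-\zeta'}{2i}-\tilde H(\cdot)$, and solve for $\zeta'$ by the holomorphic implicit function theorem) is exactly how the paper finishes, and it is fine. The gap is in your first step, which is also where all the content of the proposition lives. You propose to obtain a uniform-in-$\psi$ radius of analyticity for $H$ by running a ``quantitative analytic IFT'' on the operator $R_\phi$ of Section~\ref{sec_realan}. But $R_\phi$ acts on the real parameter $\bt$ and on the space $\ockm{1}{\alp}(\bdisc;\Cn)$ of boundary values on the circle; quantifying its analytic IFT would at best give a uniform radius for the Banach-space-valued map $\bt\mapsto F_\phi(\bt)$. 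That says nothing about how far the resulting discs extend holomorphically (and anti-holomorphically) \emph{across} $\bdisc$ in the $\xi$-variable, which is precisely what you need to make $\fH$ holomorphic in $\zobar$ on a full neighborhood $V_{\rho'}(M_\phi)$ of $\overline{M_\phi}\supset\sn_\phi$. In the paper, joint analyticity of $(\xi,\bt)\mapsto F_\phi(\bt)(\xi)$ at boundary points $\xi\in\bdisc$ was obtained by a local, qualitative edge-of-the-wedge argument; there is no uniform radius to ``track'' there, and the evaluation map of Lemma~\ref{lem_ev} is only analytic on $\Delta\times\ockm{1}{\alp}$, with radius degenerating as $\xi\to\bdisc$. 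So your accounting delivers interior bounds only, and the proposition fails to be proved near $\sn_\phi$, which is where it matters.

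The paper closes this gap not by quantifying the real construction but by re-doing it in a complexified setting: the circle is thickened to the annulus $\ann_s$, the disc is enlarged to $\Delta_s=(1+s)\Delta$, the solution is sought as a pair $(f,h)$ with $h|_{\bdisc}=\overline f|_{\bdisc}$ (so that $h$ realizes the anti-holomorphic extension of $\overline f$ off the circle), the perturbation lives in the fixed space $Y^n(r)=A(\cN_r\sn_\C;\Cn)$, and the Hilbert transform, the one-dimensional solution operator $E$, and the attaching map $R$ are all replaced by complexified versions $H_\C$, $E^\C$, $R^\C$ on these fixed thickened domains. A single application of the implicit function theorem at $(\eta,0,\mathfrak f_\eta,\mathfrak h_\eta)$ then produces $F^\C$ on a fixed product $Q(t_0,s_0)\times\{\|\varphi\|<\eps\}$, so the uniform radius $\rho'$ comes for free from the fixed domain rather than from constant-chasing. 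If you want to salvage your route, you would have to either prove a quantitative edge-of-the-wedge theorem with explicit control on the extension radius, or reformulate the boundary problem on the annulus as the paper does; the second is the realistic option. (Your treatment of the CR-singular part, by citing uniform analytic extendability in Kenig--Webster and Huang, matches the paper.)
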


Near $\Sing\sn_\phi$, this follows from the results in \cite{KeWe84} and \cite{Hu98}, where uniform analytic extendability of the local hulls of holomorphy past real-analytic nondegenerate elliptic points is established . Away from $\Sing\sn_\phi$, we obtain this by complexifying the construction of $M_{\text{TR}}$, and establishing a lower bound on the radius of convergence of its parametrizing map $\cF_\phi:\dom\rightarrow\Cn$  for every $\phi$ (or $\psi$) sufficiently small. We briefly elaborate on this below.  


In order to complexify the construction in Section~\ref{sec_away}, we need to expand our collection of function spaces. For $s\in(0,1)$, we set, $\Delta_s=(1+s)\Delta$ and $\ann_s=\{z\in\C:1-s<|z|<1+s\}$. We define $\ockm{1}{\alp}(\bdy\Delta_s)$ and $\ockm{1}{\alp}(\ann_s)$ in analogy with $\ockm{1}{\alp}(\bdy\Delta)$; see \eqref{eq_holhol}. For any open set $U\in\Cn$, we let $A(U)$ be the Banach spaces of continuous functions on $\overline{U}$, whose restrictions to $U$ are holomorphic. 
	\beas
	X^n(s)&=&\ockm{1}{\alp}({\bdy\Delta_s};\Cn)\times\ockm{1}{\alp}(\ann_s;\Cn),\\
	X^n_\rl(s)&=&\{(f,h)=(f_1,h_1,...,f_n,h_n)\in
	 X^n(s):h|_{\bdy\Delta}=\overline{f}|_{\bdy\Delta}\},\\
		Y^n(r)&=&A(\cN_r\sn_\C;\Cn),\\
%
		Y^{2n}_\rl(r)&=&\{(\varphi_1,...,\varphi_{2n})\in Y^{2n}(r):\varphi_2(z,\zbar)
		=\overline{\varphi_1}(z,\zbar),	\ima \phi_j(z,\zbar)=0, j=3,...,2n\},\\
		Z^n(r,s) &=&\{(\varphi,\eta,f,h)\in Y^{2n}(r)\times X^n(s)
		:(f,h)(\ann_s)\subset \cN_r\sn_\C\},\\
%
	\eeas
We need the bounded linear map $K_{r,s}: \rl\times Y^n(2r)\times\ockm{1}{\alp}(\bdy\Delta_{2s};\Cn)
		\rightarrow  \C\times Y^{2n}(r)\times  X^n(s)$ given by
	\beas
	&(x,\phi_1,...,\phi_n,f)\mapsto 
	(x+i0,\underbrace{\phi_1,\phi_1^*,(\rea\phi_2)^*,(\ima\phi_2)^*...,
		(\rea\phi_{n})^*,(\ima\phi_{n})^*}_{=:(\phi,\phi^*)},f,f^*),& 
	\eeas
where $\phi^*_1$, $(\rea \phi_j)^*$, $(\ima\phi_j)^*$ and $f^*$ are obtained by taking the holomorphic extensions of the real analytic functions $\overline{\phi_1}\big|_{\sn_\C}$, $(\rea\phi_j)\big|_{\sn_\C}$, $(\ima\phi_j)\big|_{\sn_\C}$, and $\overline{f}\big|_{\bdy\Delta}$, respectively. To keep the exposition short, we will only discuss the construction for the case $n=2$.

Now, fixing $r=\rho/2$ and $s=\rho/3$, and dropping all inessential references to $r$ and $s$,	we solve the following complexified version of \eqref{eq_attach} on $\ann_s$: given $\varphi\in Y^2$, find $(f,h)\in X^2$ satisfying
	\beas
		&(f_1-\varphi_1(f,h))(h_1-\varphi_2(f,h))
		+\left(\dfrac{f_2+h_2}{2}-\varphi_3(f,h)\right)^2=1&\\
		&f_2-h_2=\varphi_4(f,h),&
	\eeas  
so that $(f,h)\in X^2_\rl$ if $\varphi\in Y^2_\rl$. For this, we first define the following maps on $\C\times Z^2$.
	\beas
		\Sigma^\C&:&(\eta,\varphi,f,h)
		\mapsto\left(\eta+H_\C(\varphi_4(f,h))
			-\varphi_3(f,h)\right)^2,\ \text{and}\\ 
		P^\C&:&(\eta,\varphi,f,h)\mapsto 
		\big(\phi_1(f,h),\varphi_2(f,h),1-\Sigma(\varphi,\eta,f,h)\big),
	\eeas
where $H_\C:\ockm{1}{\alp}(\ann_s)\rightarrow\ockm{1}{\alp}(\ann_s)$ is the complexified Hilbert transform (see \cite{HiTa78}). We let $\Om^\C\subset A(\ann_r)^2\times A(\ann_r;\C\setminus(-\infty,0))$ be the domain of the operator $E^\C$ obtained by complexifying the map $E$ constructed in Lemma~\ref{lem_disc1d}. The range of $E^\C$ lies in $X^1$, and if  $(f,h)=E^\C(\varphi,\sigma)$, then 
\begin{itemize}
\item on $\ann_s$, $(f-\varphi_1)(h-\varphi_2)=\sigma$,
\item if $\varphi\in Y^2_\rl$ and $\sigma|_{\bdy\Delta}> 0$ , then $(f,h)=(E(\phi,\sqrt{\sigma}),\overline{E(\phi,\sqrt{\sigma})})$ on $\bdy\Delta$, i.e., $(f,h)\in X^2_\rl$,
\item for $c\in\C\setminus(-\infty,0]$, $E^\C(0,0,c)=(\sqrt{c}\xi,\sqrt{c}/\xi)$.
\end{itemize} 

Finally, we set $\cW^\C=\{\zeta\in \C\times Z^2: P^\C(\zeta)\in\Om^\C\}$, and define the map $R^\C:\cW^\C\rightarrow X^2$ as follows
\bes
	\zeta=(\eta,\varphi,f,h)\mapsto
		(f,h)-\big(E^\C\circ P^\C(\zeta),
		\eta+H_\C(\varphi_4(f,h))+i\varphi_4(f,h),
		\eta+H_\C(\varphi_4(f,h))-i\varphi_4(f,h)\big).
\ees
We note that $R^\C$ complexifies the map  $R^\rl:(\bt,\phi,f)\mapsto \pi\circ R^\C(\bt+i0,K(\phi,f))$, where  $\pi$ denotes the projection $(z_1,w_1,z_2,w_2)\mapsto (z_1,z_2)$, and $R^\rl=0$ gives equations \eqref{eq_attach} (attaching equation for $\sn_\phi$). Now, all the complexified maps constructed are holomorphic on their respective domains, and therefore, so is $R^\C$. Moreover, $(\eta,0,\mathfrak{f}_\eta,\mathfrak h_\eta)\in\cW^\C$, $R^\C(\eta,0,\fg_\eta)=0$ and $D_3R^\C(\eta,0,\fg_\eta)=\id$, for $\eta\in Q(1,s)=(-1,1)\times (-is,is)$, where $\mathfrak f_\eta(\xi)=(\sqrt{1-\eta^2}\xi,\eta)$ and $\mathfrak h_\eta(\xi)=(\sqrt{1-\eta^2}\xi^{-1},\eta)$. Thus, by repeating the argument in \S \ref{subsec_exist}, given $t_0<1-s$, $s_0<s$, there is an $\eps>0$ and a holomorphic map $F^\C:Q(t_0,s_0)\times\{\varphi\in Y^4:||\varphi||<\eps\} \rightarrow X^2$, such that $R^\C(\eta,\varphi,F^\C(\eta,\varphi))=0$. Now, setting $\cF^\C_\varphi:\Delta_s\times Q(t_0,s_0)\rightarrow\C^{2}$ by $(\xi,\eta)\mapsto\pi\circ F^\C(\eta,\varphi)(\xi)$, we have that 

\begin{itemize}
\item [$(a)$] $\varphi\mapsto\cF^\C_\varphi$ is continuous from $\{\varphi\in Y^4:||\varphi||<\eps\} $ to $\cont^2\left(\overline{\Delta_s\times Q(t_0,s_0)};\CC\right)$. Thus, $\cF^\C_\varphi$ is an embedding (for $\eps>0$ sufficiently small),
\item [$(b)$] if $\varphi=(\phi,\phi^*)\in Y^4_\rl$, then $\cF^\C_\varphi$ maps $\bdy\Delta\times(-t_0,t_0)$ onto an open set in $\sn$.
\end{itemize}
Now, to obtain Proposition~\ref{prop_graph} for $M_{\text{TR}}$, we apply the implicit function theorem on $\cF^\C_\varphiup(\Delta_s\times Q(t_0,s_0))$ to solve for $\overline w$ in terms of $z$, $\zbar$ and $w$.

\subsection{The polynomially convex hull of $\sn_\phi$ in the real-analytic case} We note that if $M$ is as constructed in the previous section, then due to its foliated structure, $M$ is contained in both the schlicht part of $\wt{\sn_\phi}$, and in $\wh{\sn_\phi}$. In this section, we show that when the perturbations are real-analytic and admit a uniform lower bound on their radii of convergence, then $M$ is in fact polynomially convex. This will complete the proof of Theorem~\ref{thm_main}. Our strategy is to globally `flatten' $M$,  which allows for $M$ to be expressed as the intersection of $n-1$ Levi-flat hypersurfaces, to each of which we can apply Lemma~\ref{lem_polcvxLF}. We note that when $n=2$, the flattening is unnecessary, and the final claim follows directly from Lemma~\ref{lem_polcvxLF} (as seen in Bedford's paper \cite{Be82}). 

\begin{lemma}\label{lem_flat} There is a neighborhood $\cW$ of $\overline\Om_\phi$ in $\Cn$ and a biholomorphism $G:\cW\rightarrow\Cn$ such that $M\Subset G(\cW)$ and $||G-\id||_{\cont^1}\lesssim \de$.
\end{lemma}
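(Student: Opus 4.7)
The plan is to construct $G$ as a holomorphic straightening of the (complexified) Levi foliation of $M$. By the real-analyticity hypothesis on $\psi$ and the complexification technique developed in the proof of Proposition~\ref{prop_graph}, the real-analytic parametrization $\cF_\phi(\xi,\bt)=F(\bt,\phi)(\xi)$ of the totally-real part of $M$ extends to a holomorphic map $\wt{\cF_\phi}\colon\Delta_s\times D_\C^{n-1}(t_0+s_0)\to\Cn$, with extension radii $s,s_0>0$ controlled from below by the radius of convergence $\rho$. In the unperturbed case $\wt{\cF_0}(\xi,\bt)=(\sqrt{1-||\bt||^2}\,\xi,\bt)$.

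I set $\Lambda_\phi(\xi,\bt):=(\pi_1\wt{\cF_\phi}(\xi,\bt),\bt)$, a holomorphic map from the parameter space into the ``flat'' coordinates on $\C\times\C^{n-1}$. Since $\partial_\xi\pi_1\wt{\cF_\phi}=\sqrt{1-||\bt||^2}+O(||\phi||_{\cont^3})$ is nonvanishing for $||\bt||\leq t_0<1$ and $\phi$ sufficiently small, $\Lambda_\phi$ is a biholomorphism onto its image by the inverse function theorem. Defining $G:=\wt{\cF_\phi}\circ\Lambda_\phi^{-1}$ gives $G(z_1,\bt)=(z_1,\pi'\wt{\cF_\phi}(\xi(z_1,\bt),\bt))$ for the unique preimage $\xi(z_1,\bt)$. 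A direct computation shows $G=\id$ when $\phi=0$, and continuous dependence of $\wt{\cF_\phi}$ on $\phi$ yields $||G-\id||_{\cont^1}\lesssim ||\phi||_{\cont^3}\lesssim\de$. The domain $\cW:=\Lambda_\phi(\Delta_s\times D_\C^{n-1}(t_0+s_0))$ is a complex neighborhood of $\{(z_1,\bt)\in\C\times\rl^{n-1}:|z_1|\leq\sqrt{1-||\bt||^2},\:||\bt||<t_0\}$, and so, for $||\phi||$ sufficiently small, $\cW$ contains a neighborhood of $\overline{\Om_\phi}\cap\{||x'||<t_0\}$. Every leaf of $M$ that lies over $\{||x'||<t_0\}$ then belongs to $G(\cW)$ by construction, and $G^{-1}(M)\subset\C\times\rl^{n-1}$ — the flattening property needed to subsequently express $M$ as the intersection of the Levi-flat hypersurfaces $G(\{\ima z_j=0\}\cap\cW)$, $j=2,\ldots,n$.

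The principal obstacle is that $\Lambda_\phi$ degenerates as $||\bt||\to 1$, precisely near the singular set $\Sing\sn_\phi$, so the foliation-based $G$ is only defined away from $\Sing\sn_\phi$. To extend $G$ over this region, I would use the real-analytic Kenig--Webster--Huang description from Theorem~\ref{thm_KWH}: near each CR singularity of $\sn_\phi$, the local hull of holomorphy admits an explicit real-analytic normal form (see \cite{KeWe84}, \cite{Hu98}), which complexifies to a local biholomorphic flattening close to the identity. Two such local biholomorphisms agree, up to reparametrization of the disks, on their overlap by the uniqueness of the holomorphic foliation by Bishop-type disks --- both straighten the same complex foliation. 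This uniqueness allows me to patch the foliation-based $G$ with the KWH-based local biholomorphisms into a single biholomorphism $G$ defined on a neighborhood of all of $\overline{\Om_\phi}$, while preserving the $\cont^1$-smallness estimate.
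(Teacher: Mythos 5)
Your construction away from $\Sing\sn_\phi$ is plausible, but the way you handle the CR-singular locus leaves a genuine gap, and that is precisely where the difficulty of the lemma sits. First, as you yourself note, $\Lambda_\phi$ degenerates as $||\bt||\to 1$: already in the model case $\Lambda_0^{-1}(z_1,\bt)=(z_1/\sqrt{1-||\bt||^2},\bt)$ blows up, so both the invertibility of $\Lambda_\phi$ and the estimate $||G-\id||_{\cont^1}\lesssim\de$ are lost uniformly near the singular set, and your $\cW$ misses a whole neighborhood of $\overline\Om_\phi\cap\{||x'||\geq t_0\}$. Second, and more seriously, the patching step does not go through as stated: two biholomorphisms that straighten the same holomorphic foliation agree only up to post-composition with a foliation-preserving automorphism of the flat model (a map of the form $(z_1,z')\mapsto(\mu(z_1,z'),\nu(z'))$), which need not be the identity. ``Agreeing up to reparametrization of the disks'' therefore does not produce a single well-defined $G$; to glue the local flattenings you would have to correct them by solving a nonabelian Cousin-type problem for these transition maps, and you offer no argument for that. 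The $\cont^1$-closeness of each local piece to the identity does not by itself resolve it.

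The paper avoids both issues by never patching flattenings at all. Proposition~\ref{prop_graph} already supplies a \emph{single} holomorphic graphing function $\fH$ on a uniform neighborhood $V_{\rho'}(M_\phi)$ of the entire manifold, including near $\Sing\sn_\phi$ (there one uses the uniform analytic extendability of the local hulls from \cite{KeWe84}, \cite{Hu98}). From the Levi foliation one extracts a single global tangential $(1,0)$-field $L=\partial/\partial z_1+\sum_j a_j\,\partial/\partial z_j$ whose coefficient vector $\ba=\partial\fH/\partial\zobar$ is CR on $M'$ and hence extends to a holomorphic map $\bA$ on $V_{\rho'/2}(M_\phi)$ with $||\bA||_{\cont^1}\lesssim\de$. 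The flattening $G$ is then obtained in one stroke by integrating the flow equation $\partial g/\partial z_1=\bA(z_1,g)$ with real-analytic Cauchy data on $\{z_1=0\}$, via a contraction-mapping argument on the ball $B=(1+\rho'/4)\overline{\bn}$; the smallness of $\bA$ gives both the contraction and the estimate $||G-\id||_{\cont^1}\lesssim\de$, and uniqueness of integral curves gives $G(B)\subset M'$. If you want to salvage your approach, you should replace the patching step by this (or some other genuinely global) construction.
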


\begin{proof} 

We let $M'=\{(z_1,z')\in V_{\rho'/2}M_\phi:\zbar'=\fH(z_1,\zobar,z')\}$, where $\rho'$ and $\fH$ are as in Proposition~\ref{prop_graph}. since $M'$ is a small perturbation of $\gr(0)$ and is foliated by analytic discs, it admits a tangential $(1,0)$-vector field, $L=\smpartl{}{z_1}+a_2\smpartl{}{z_2}+\cdots a_n\smpartl{}{z_n}$, $a_2,...,a_n\in\cont^\om(M';\C)$, such that $[L,\overline L]\in\text{span}\{L,\overline L\}$ mod $HM'\otimes_\rl\C$ on $M'$. The conditions on $L$ give that 
\begin{itemize}
\item [$(a)$] $\overline L(\ba)\equiv 0$ on $M'$, i.e., $\ba$ is a CR-map on $M'$, where $\ba=(a_2,...,a_n)$, and
\item [$(b)$] $\ba(z_1,z')=\partl{\mathfrak H}{\zobar}(z_1,\zobar, z')$ along $M'$, since $L(\zbar '-\fH(z_1,\zobar,z'))=0$.
\end{itemize}
Thus, we get that $\ba$ extends as a holomorphic map, say $\bA$, to some neighborhood of $M'$. Since, $\fH$ (and, therefore $\ba$) has radius of convergence at least $\rho'/2$ on $M'$, $\bA$ is holomorphic on  $V_{\rho'/2}(M_\phi)$. Further, we have that $\bA(z_1,z')=\ba(z_1,\zobar, z',\fH(z_1,\zobar,z'))$ on $V_{\rho'/2}(M_\phi)$, which gives the bound $||\bA||_{\cont^1}\lesssim\de$ on $V_{\rho'/2}(M_\phi)$ (since $||a||_{\cont^1}<\de$ on $M'$, from $(b)$). 

We now construct the flattening map. By applying the implicit function theorem to the equation $\zbar'=\fH(z_1,\zobar,z')$ on $V_{\rho'/2}(M_\phi)$, we can solve for $y'$ in terms of $x_1$, $y_1$ and $z'$ to write $M'=\gr_{\Om'}{H}$, where $\Om'$ is the $(1+\rho'/2)$-tubular neighborhood of $\Om_\phi$ in $\C\times\rl^{n-1}$, and $H:\Om'\rightarrow\rl^{n-1}$ is a $\cont^\om$-map with $||H||_{\cont^1}\lesssim \de$. Shrinking $\eps$ further, we may assume that $\Om_\phi\subset B \subset\Om'$, where $B=(1+\rho'/4)\overline{\bn}$. Given $(z_1,x')\in B$, we let $w(z_1,x')=x'+iH(x_1,y_1,x')$. Now, on the metric space $\mathscr F=\{g\in\cont(B;\rl^{n-1}):\sup_{B}||g-w||<\rho'/2\}$, endowed with the sup-norm, we consider the map  
	\bes
	Q:g\mapsto (Qg)(z_1,x')= x'+iH(0,0,x')+\int_0^{z_1} A(\xi,g(\xi,x'))d\xi.
	\ees
To see this, note that for $g$, $g_1$, $g_2\in\mathscr F$, we have 
	\beas
	&\sup_B||Qg-w||\leq \sup_{B}||H(0,0,x')-H(x_1,y_1,x')||
		+\sup_{V_{\rho'/2}(M_\phi)}||A||\diam(B)
			\lesssim\de\left(1+\frac{\rho'}{4}\right),&\\
		&\sup_B||Qg_1-Qg_2||\leq
		 \sup_{V_{\rho'/2}(M_\phi)}||DA||\diam(B)\sup_B||g_1-g_2||
	\lesssim \de\left(1+\frac{\rho'}{4}\right)\sup_B||g_1-g_2||.&
	\eeas 
Shrinking $\eps>0$ further, if necessary, we can ensure that $\de(1+\rho'/4)<\min\{\rho'/2,1\}$. Thus, 	$Q(\mathscr F)\subset\mathscr{F}$, and $Q$ is a contraction, i.e., $||Qg_1-Qg_2||_{\mathscr F}<||g_1-g_2||_{\mathscr F}$, for all $g_1,g_2\in\mathscr F$. By the Banach fixed point theorem, there is a unique $g_0\in\mathscr F$ such that $Q(g_0)=g_0$. In other words, $G:(z_1,x')\mapsto(z_1,g_0(z_1,x'))$ is a solution of the flow equation
	\beas
		&\partl{g}{z_1}(z_1,x')
		=\left(1,\bA(z_1,g(z_1,x'))\right),&\quad \text{on}\ B,\\
			&g(0,x')=x'+iH(0,0,x'),&
				\quad \text{on}\ B_0=B\cap\{z_1=0\}.
	\eeas
By the local uniqueness and regularity of solutions to quasilinear PDEs with real-analytic Cauchy data, $G$ must  be real-analytic in $z_1$ and $x'$. Moreover, $||G-\id||_{\cont^1(B)}\lesssim\de$. Thus, $G$ extends to a biholomorphism in some neighborhood $\cW$ of $B$. Now, since $G_*(\bdy/ \bdy z_1)=L$ and $G(B_0)\subset M'$, by the uniqueness of integral curves,  $G(B) \subset M'$. Finally, if $z\in\bdy B$, then $||\Pi\circ G(z)-z||\lesssim\de$, where $\Pi:\Cn\rightarrow\C\times\rl^{n-1}$ is the projection map, and $\de$ can be made sufficiently small (by shrinking $\eps$) so that $\Om_\phi\subset (\Pi\circ G)(B)$, and thus, $M\Subset G(B)\subset G(\cW)$. This settles our claim. 
\end{proof}

Now, to complete the proof of the polynomial (and holomorphic convexity) of $M$, we need the following lemma. 

\begin{lemma}\label{lem_polcvxLF}
Let $D'\subset\C^{n-1}\times\rl$ be a domain containing the origin, and $F:D'\rightarrow\rl$ be a smooth function such that $\mathcal{L}'=\gr_{D'}(F)$ is a Levi-flat hypersurface. Then, for any strongly convex domain $D\Subset D'$ containing the origin, the set $\mathcal L=\gr_{\overline D}(F)$ is polynomially convex.
\end{lemma}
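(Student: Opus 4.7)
The plan is to adapt Bedford's strategy from \cite{Be82} to the higher-dimensional graph setting. The first step is to upgrade the strong convexity of $\overline D$ to a strictly plurisubharmonic barrier on $\Cn$. Let $\rho\in\cont^\infty(\C^{n-1}\times\rl;\rl)$ be a smooth strictly convex defining function of $\overline D$ (so $D=\{\rho<0\}$, $\nabla\rho\neq 0$ on $\bdy D$, and the real Hessian of $\rho$ is positive definite), and set
$$\tilde\rho(z',w):=\rho(z',\rea w).$$
Using the identity $(\partial\bar\partial\tilde\rho)(\xi,\bar\xi)=\tfrac14\bigl(\text{Hess}_{\rl}\tilde\rho(X,X)+\text{Hess}_{\rl}\tilde\rho(JX,JX)\bigr)$ for $X=\xi+\bar\xi$, together with the observation that $\ker\text{Hess}_{\rl}\tilde\rho=\rl\cdot\bdy_v$ but $J\bdy_v=-\bdy_u\notin\ker\text{Hess}_{\rl}\tilde\rho$, one verifies that $\tilde\rho$ is strictly plurisubharmonic on $\Cn$, with $\{\tilde\rho\leq 0\}=\overline D\times i\rl$. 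Since $\mathcal L\subset\{\tilde\rho\leq 0\}$ and $\pm\ima w$ is bounded on $\mathcal L$ by the extrema $m,M$ of $F$ on $\overline D$, the maximum principle for psh functions confines $\wh{\mathcal L}$ to a compact subset of $\overline D\times i[m,M]$.

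The second step uses the Levi-flat foliation to reduce the problem to the boundary of $\mathcal L$. Each leaf of the foliation of $\mathcal L'$, intersected with $\overline D\times i\rl$, is a compact complex $(n{-}1)$-submanifold $L\subset\mathcal L$ with topological boundary $\bdy L\subset\bdy\mathcal L$. By Rossi's local maximum modulus principle, $L\subset\wh{\bdy L}\subset\wh{\bdy\mathcal L}$, so $\mathcal L\subset\wh{\bdy\mathcal L}$; conversely, the leafwise maximum principle applied to $|P|^2$ on each leaf gives $\sup_{\mathcal L}|P|=\sup_{\bdy\mathcal L}|P|$ for every polynomial $P$, and therefore $\wh{\mathcal L}=\wh{\bdy\mathcal L}$. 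It thus suffices to prove the reverse inclusion $\wh{\bdy\mathcal L}\subset\mathcal L$.

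This last step is the main obstacle and is where Bedford's trick enters. The crucial observation is that $\bdy\mathcal L=\gr_{\bdy D}(F|_{\bdy D})$ lies on the strongly pseudoconvex hypersurface $\bdy D\times i\rl=\{\tilde\rho=0\}$, strong pseudoconvexity being immediate from the strict plurisubharmonicity of $\tilde\rho$. For any $q_0\in(\overline D\times i[m,M])\setminus\mathcal L$, the plan is to produce (via Oka--Weil approximation on a polynomial polyhedron neighborhood of $\bdy\mathcal L$) a holomorphic polynomial whose modulus is strictly larger at $q_0$ than on $\mathcal L$. The construction combines two ingredients: (i) local peak functions at each point of $\bdy\mathcal L$ on the strongly pseudoconvex $\bdy D\times i\rl$, built from the holomorphic quadratic of $\tilde\rho$; and (ii) local holomorphic CR-defining functions $H_\alpha$ of the Levi-flat hypersurface $\mathcal L'$ satisfying $\mathcal L'\cap U_\alpha=\{\ima H_\alpha=0\}$, so that $\pm\ima H_\alpha$ are locally pluriharmonic. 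The graph structure of $\mathcal L'$ over the simply-connected $\overline D$ allows the sign conventions of the local $H_\alpha$ to be chosen coherently, yielding a globally-defined pluriharmonic function on a tubular neighborhood of $\mathcal L$ whose vanishing locus is exactly $\mathcal L'$. Coupling this global pluriharmonic separator with the boundary peak functions from (i) rules out any point of $\wh{\bdy\mathcal L}$ outside $\mathcal L$. The delicate ingredient, which I expect to be the principal source of technical work, is the global patching in (ii) and its interplay with the boundary peaking in (i); this is precisely the trick of Bedford from the $n=2$ case, operating here in the higher-dimensional hypersurface setting.
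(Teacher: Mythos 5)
Your Steps 1 and 2 (the psh barrier $\tilde\rho(z',w)=\rho(z',\rea w)$ confining the hull, and the leafwise maximum principle reducing everything to $\wh{\bdy\mathcal L}$) are sound, but the entire content of the lemma sits in your Step 3, and that step has a genuine gap rather than a deferred technicality. Ingredient (ii) is false under the lemma's hypotheses: $F$ is only assumed smooth, and a hypersurface of the form $\{\ima H_\alpha=0\}$ with $H_\alpha$ holomorphic and $dH_\alpha\neq 0$ is automatically real-analytic, so the local holomorphic first integrals $H_\alpha$ you want to patch simply do not exist for a merely smooth Levi-flat graph. Even in the real-analytic category, two local first integrals of the foliation differ by composition with a biholomorphism of the leaf-space parameter, not merely by a sign, so producing a single globally defined pluriharmonic function vanishing exactly on $\mathcal L'$ is a nontrivial (and in general obstructed) claim that you would have to prove, not assume from simple connectivity of $\overline D$. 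Finally, the concluding sentence — ``coupling the separator with the boundary peak functions rules out any point of $\wh{\bdy\mathcal L}$ outside $\mathcal L$'' — is not an argument: the separator is only defined near $\mathcal L$, a candidate hull point $q_0$ need not be near $\mathcal L$, and the Oka--Weil step presupposes a polynomial polyhedron containing $\bdy\mathcal L$ and excluding $q_0$, which is essentially the statement you are trying to prove.

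The paper exploits Levi-flatness in a different and much softer way that avoids all of this. Since $\gr(F)$ is Levi-flat, both $\{v<F(z^*,u)+t\}$ and $\{v>F(z^*,u)-t\}$ are pseudoconvex, so the slabs $U_t=\{(z^*,w):(z^*,u)\in D_t,\ |v-F(z^*,u)|<t\}$ form a decreasing family of pseudoconvex neighborhoods of $\mathcal L$ that also increase up to the polynomially convex block $\overline{D_{t_0}}\times[-iC,iC]$; the Docquier--Grauert theorem then yields polynomial convexity of $\mathcal L=\bigcap_{t>0}U_t$ directly, with no peak functions, no first integrals, and no separation argument. If you want to salvage your route, you would at minimum have to restrict to real-analytic $F$ and supply a proof of the global first-integral claim; as written, the proposal does not prove the lemma.
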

\begin{proof} We fix a $t_0\in(0,1)$ such that $D_t=(1+t)D\Subset D'$ for all $t\leq t_0$. Now, set $C=2(t_0+\sup_{\overline D_{t_0}}|F|)$. Since $\overline D_{t_0}\times [-iC,iC]$ is polynomially convex in $\Cn$, by a theorem due to Docquier and Grauert (see \cite{DoGr60}), it suffices to produce a family of pseudoconvex domains, $\{U_t\}_{0<t}$ in $D_{t_0}\times(-iC,iC)$ such that
\beas
	\overline{U_s}\subset U_t\ \text{if}\ s<t,
	\quad \bigcap_{s>t}\: \text{int}\: U_s=U_t,
	\quad \bigcup_{s<t}{U_s}=U_t,
	\quad \mathcal{L}=\bigcap_{0<t} U_t,\
	\text{\hspace{3pt}and\hspace{3pt} }  
	D_{t_0}\times(-iC,iC)=\bigcup_{0<t} U_t.
\eeas
 We use the notation $(z^*,w)$ to denote a point in $\C^{n-1}\times\C$, with $w=u+iv$. Now, consider the following pseudoconvex domains. 
\beas
	U_{t}=
	\begin{cases}
	\{(z^*,w):(z^*,u)\in D_t, |v-F(z^*,u)|<t\},&\ 
\quad 0<t\leq t_0,\\
	\{(z^*,w):(z^*,u)\in D_{t_0},\max(-C,F(z^*,u)-t) <v<\min(F(z^*,u)+t,C)\},&\ 
\quad t>t_0.
	\end{cases}
	\eeas
The claim now follows. 
\end{proof}

Finally, given $j=2,...,n$, let $Y_j$ denote the hyperplane $\{z\in\Cn:\ima z_j=0\}$. We set 
	\bes
		\mathcal{L}_j'=G\left(\cW\cap Y_j\right)
	\ees
Shrinking $\eps$ further, if necessary, we have that $\mathcal{L}_j'$ is a graph of some smooth function $F^j$ over some open set $D_j'\subset Y_j\cong \C^{n-1}\times\rl$ such that $\Om_\phi\Subset D_j'\subset \cW\cap Y_j$. We now choose a strongly convex domain $\cE\subset\Cn$ such that 
\begin{itemize}
\item [$*$] $\cE\cap Y_j\Subset D_j'$, and 
\item [$*$] $\cE\cap Y_2\cap\cdots\cap Y_n=\Om_\phi$.
\end{itemize}
This can be obtained, for instance, by letting $\mathcal{E}=\{\tau^2 p(z,x')+||y'||^2<0\}$, where $p$ is a smooth strongly convex exhaustion function of $\Om_\phi$ with $p\geq -1$ (see \cite{Bl97}), and $\tau>0$ is small enough. Now, we apply Lemma~\ref{lem_polcvxLF} to $D_j'$, $F^j$ and $D_j=\cE\cap Y_j$, and conclude that $\mathcal{L}_j=\gr_{\overline{\cE_j}}(F^j)$ is polynomially convex. However, 
	\bes
		M=\bigcap_{j=2}^n \mathcal{L}_j.
	\ees
Thus, $M$ is polynomially convex. 

\bibliography{refs}{}
\bibliographystyle{plain}

\end{document}